\newtheorem{theorem}{Theorem}[section]
\newtheorem{lemma}[theorem]{Lemma}
\newtheorem{proposition}[theorem]{Proposition}
\theoremstyle{definition}
\newtheorem{definition}[theorem]{Definition}
\newtheorem{example}[theorem]{Example}
\newtheorem{remark}[theorem]{Remark}
\numberwithin{equation}{section}                        % OS
\newcommand{\mylabel}[1]{\label{#1}
            \ifx\undefined\stillediting
            \else \fbox{$#1$}\fi }
\newcommand{\BE}{\begin{equation}}
\newcommand{\BEQ}[1]{\BE\mathlabel{#1}} % Changed by Olof
\newcommand{\EEQ}{\end{equation}}
\newcommand{\rfb}[1]{\mbox{\rm
   (\ref{#1})}\ifx\undefined\stillediting\else:\fbox{$#1$}\fi}
\newfont{\roma}{cmr10 scaled 1200}
\newcommand{\nline}  {{\mathbb N}}
\newcommand{\rline}  {{\mathbb R}}
\newcommand{\zline}  {{\mathbb Z}}
\newcommand{\Mscr} {{\cal M}}
\newcommand{\Nscr} {{\cal N}}
\newcommand{\mm}     {{\hbox{\hskip 0.5pt}}}
\newcommand{\m}      {{\hbox{\hskip 1pt}}}
\newcommand{\nm}     {{\hbox{\hskip -3pt}}}
\newcommand{\nmm}    {{\hbox{\hskip -1pt}}}
\newcommand{\bluff}  {{\hbox{\raise 26pt \hbox{\mm}}}}
\newcommand{\sbluff} {{\hbox{\raise  9pt \hbox{\mm}}}}
\renewcommand{\L}    {{\Lambda}}
\renewcommand{\l}    {{\lambda}}
\renewcommand{\o}    {{\omega}}
\newcommand{\Om}     {{\Omega}}
\newcommand{\e}      {{\varepsilon}}
\newcommand{\vp}     {{\varphi}}
\newcommand{\half}   {{\frac{1}{2}}}
\newcommand{\xx}     {{\rm\bf x}}
\newcommand{\dd}     {{\rm d\hbox{\hskip 0.5pt}}}
\newcommand{\FORALL} {{\hbox{$\hskip 11mm \forall \;$}}}
\newcommand{\rarrow} {\mathop{\rightarrow}}
\newcommand{\secp}{{\hbox{\hskip -7mm.\hskip 4mm}}}
\let\oldlabel=\label
\renewcommand{\label}[1]{\leavevmode\smash{\raise 10pt\llap
             {\fbox{\scriptsize#1}}}\oldlabel{#1}}
\newcommand{\mathlabel}[1]{\smash{\raise 9pt\llap
             {\scriptsize(#1)}}\label{#1}}
\renewcommand{\label}[1]{\oldlabel{#1}}
\renewcommand{\mathlabel}[1]{\label{#1}}
\newcommand{\bbm}[1]{\left[\begin{matrix} #1 \end{matrix}\right]}
\begin{document}

\renewcommand{\thefootnote}{\fnsymbol{footnote}}
\renewcommand{\thefootnote}{\fnsymbol{footnote}}
\newcommand{\footremember}[2]{%
   \footnote{#2}
    \newcounter{#1}
    \setcounter{#1}{\value{footnote}}%
}
\newcommand{\footrecall}[1]{%
    \footnotemark[\value{#1}]%
}
\makeatletter
\def\blfootnote{\gdef\@thefnmark{}\@footnotetext}
\makeatother

%%%%%%%%%%**********%%%%%%%%%%**********%%%%%%%%%%**********%%%%%%%%%%
\begin{center}
{\Large \bf Almost global asymptotic stability of a \\[1ex]
grid-connected synchronous generator}\\[2ex]
Vivek Natarajan and George Weiss
\blfootnote{This work was partially supported by grant no. 800/14
of the Israel Science Foundation.}
\blfootnote{V. Natarajan (n.vivek.n@gmail.com) and G. Weiss
(gweiss@eng.tau.ac.il) are with the School of Electrical
Engineering, Tel Aviv University, Ramat Aviv, Israel, 69978,
Ph:+97236405164.}
\blfootnote{Preliminary versions of this paper have been presented at
the IEEE-CDC 2014, see \cite{NaWe:14} and at the IEEEI 2014, see
\cite{NaWe:14b}.}
\end{center}

{\leftskip 10mm {\rightskip 10mm \small \noindent {\bf Abstract.} We
study the global asymptotic behavior of a grid-connected constant
field current synchronous generator (SG). The grid is regarded as an
``infinite bus'', i.e. a three-phase AC voltage source. The generator
does not include any controller other than the frequency droop
loop. This means that the mechanical torque applied to this generator
is an affine function of its angular velocity. The negative slope of
this function is the frequency droop constant. We derive sufficient
conditions on the SG parameters under which there exist exactly two
periodic state trajectories for the SG, one stable and another unstable,
and for almost all initial states, the state trajectory of the SG
converges to the stable periodic trajectory (all the angles are measured
modulo $2\pi$). Along both periodic state trajectories, the angular
velocity of the SG is equal to the grid frequency. Our sufficient
conditions are easy to check computationally. An important tool in our
analysis is an integro-differential equation called the {\em exact
swing equation}, which resembles a forced pendulum equation and
is equivalent to our fourth order model of the grid-connected
SG. Apart from our objective of providing an analytical proof for a
global asymptotic behavior observed in a classical dynamical system, a
key motivation for this work is the development of synchronverters
which are inverters that mimic the behavior of SGs. Understanding the
global dynamics of SGs can guide the choice of synchronverter
parameters and operation. As an application we find a set of stable
nominal parameters for a 500\m kW synchronverter. \par}}

{\leftskip 10mm {\rightskip 10mm \small \noindent {\bf Key words.}
synchronous machine, infinite bus, almost global asymptotic stability,
forced pendulum equation, synchronverter, virtual inductor. \par}}

{\leftskip 10mm {\rightskip 10mm \small \noindent {\bf AMS
classification.} 34D23, 93D20, 94C99. \par}}

%%%%%%%%%%**********%%%%%%%%%%**********%%%%%%%%%%**********%%%%%%%%%%
\section{\secp Introduction} \label{sec1}     % Section 1

\ \ \ Synchronous generators (SGs), once synchronized to the power
grid, tend to remain synchronized even without any control unless very
strong disturbances destroy the synchronism - this is a feature that
enabled the development of the AC electricity grid at the end of the
XIX century. We investigate this feature by considering one
synchronous generator and analyzing its ability to synchronize when it
is connected to a much more powerful grid, so that this one generator
has practically no influence on the grid. Thus we model the grid as an
``infinite bus'', i.e. a three-phase AC voltage source. Following
standard practice, the prime mover (the engine that gives the
mechanical torque to the generator) is assumed to provide a torque of
the form $T_m-D_{p,\m{\rm droop}}\o$. Here $T_m>0$ is a mechanical
torque constant, $D_{p,\m{\rm droop}}>0$ is the frequency droop
constant (this is used to stabilize the utility grid) and $\o$ is the
angular velocity of the rotor. The question we address is: {\em under
what conditions will the state trajectory of a grid-connected SG,
driven by a prime mover as above, having a constant field current
(rotor current) and starting from an arbitrary initial state, converge
to a state of synchronous rotation?} (Synchronous rotation means a
constant difference between the grid angle and the SG rotor angle.)

The above question can be reformulated as a question of almost global
asymptotic stability of a SG model in a transformed coordinate system.
The importance of the stability of a grid-connected generator has been
recognized for a long time and this or closely related problems have
been studied, for instance, in \cite{DeCo:1969,GOBS:03,GrSt:94,Kun:94,
Park:1929,SaPa:97,YuWo:1967,ZhOh:09}. A full model of the SG consists
of the electrical equations governing the fluxes in the stator, rotor
and damper windings, along with the mechanical swing equation
governing the rotor dynamics. As far as we know, all the available
stability studies are based on some sort of simplification/reduction
of the full model obtained by: (i) reducing the full model to a lower
order (usually second or third order) non-linear system by
approximating the stator and the damper flux dynamics by static
equations and sometimes assuming constant rotor current, or (ii)
linearizing the full or the reduced order model around some
equilibrium point. Most of the studies that use reduced order models
focus on the local stability properties of the generator. A notable
exception in this regard is \cite{HaLeRa:87}, which considers various
reduced order SG models and derives sufficient conditions for every
state trajectory of the model to converge to an equilibrium point. The
paper \cite{CaTa:14} considers (among other things) a synchronous
machine connected to a three-phase AC voltage source having a constant
phase difference with respect to the machine angle. Such a dependent
voltage source is encountered in ``brushless DC motors''. The rotor
current is assumed to be constant and there are no damper windings,
and in this respect, their setup resembles ours. They prove the global
asymptotic stability of this system. This is an interesting problem,
but different from the stability of a SG connected to an infinite
bus. The paper \cite{FZOSS} proves (among other things) the global
asymptotic stability of a full (8th order) SG model when it is
connected to a linear resistive load (not a grid), using the formalism
of port-Hamiltonian systems.

In the present work, we study the global asymptotic stability
properties of a grid-connected generator without approximating the
stator flux dynamics (analyzing reduced order models that approximate
the stator flux dynamics can lead to incorrect conclusions about the
full model, see Remark \ref{red_model}). But we do restrict our attention
to the case where the rotor current is constant and the damper windings
are absent. We derive sufficient conditions on the SG parameters under
which there exist exactly two periodic state trajectories, one stable
and another unstable, and for almost all initial states, the state
trajectory of the SG converges to the stable periodic trajectory (all
the angles are measured modulo $2\pi$), see Theorem \ref{stab_cond2}.
Along both the periodic trajectories, the rotor angular velocity is
equal to the grid frequency. To derive the sufficient conditions, a
fourth order nonlinear time-invariant model for the grid-connected SG
is constructed in a transformed coordinate system using the Park
transformation in Section \ref{sec3}. In this coordinate system the
two periodic state trajectories of the SG are mapped into two distinct
points which are the unique stable and unstable equilibrium points of
the fourth order model. If for almost every initial state, the state
trajectory of the SG model converges to the stable equilibrium point,
then we call the model {\em almost globally asymptotically stable}. In
Section \ref{sec4} we derive an integro-differential equation called
the {\em exact swing equation} (ESE), which resembles a forced
pendulum equation and is equivalent to the fourth order SG model from
Section \ref{sec3}. Every trajectory of the fourth order model
converges to one of its equilibrium points if and only if every
trajectory of the ESE converges to one of two possible limit
points. We derive some new estimates for the asymptotic response of a
forced pendulum equation driven by a time-varying bounded forcing in Section
\ref{sec5}. Applying these estimates to the ESE, we define a nonlinear
map $\Nscr:(0,\Gamma]\to[0,\infty)$ in Section \ref{sec6} which (along
with $\Gamma$) depends on the SG parameters. We prove that if
$\Nscr(x) <x$ for all $x\in (0,\Gamma]$, then the SG is almost
globally asymptotically stable. For any given set of SG parameters, it
is easy to plot $\Nscr$ to verify if the above sufficient stability
condition is satisfied.

The inherent stability of networks of synchronous generators coupled
with various types of loads and power sources (such as inverters) is
currently an area of high interest and intense research, see for
instance \cite{CaTa:14,DoBu:12,DoBu:14,FZOSS,SaPa:14}. This is partly
due to the proliferation of power sources that are not synchronous
generators, which threatens the stability of the power grid. One
approach to addressing this threat has been the introduction of
synchronverters, see \cite{BeHe:07,Bro:15,DoChLi:15,DrVi:08,
Zh_etal:14, ZhWe:09,ZhWe:11}. A synchronverter consists of an inverter
(i.e. a DC to three-phase AC switched power converter) together with a
passive filter (inductors and capacitors) that behave towards the
power grid like a SG. A synchronverter has a rotor with inertia, a
field coil with inductance and three stator coils with inductance and
resistance, like a SG. But the field coils and the rotor in a
synchronverter are virtual, i.e. they are implemented in software,
while the stator coils are realized using the filter inductors. The
dynamical equations governing the SG and the synchronverter are the
same. Thus the synchronverter can be controlled like a SG, employing
droop control loops and other controllers. This makes the power grid
with inverters implemented as synchronverters easier to control using
well established algorithms developed for SGs.

One motivation for our study comes from the development of
synchronverters. In \cite{Zh_etal:14} an initial synchronization
algorithm was proposed that can be run (typically for some seconds)
before connecting the synchronverter to the grid. The purpose of this
algorithm is to ensure that the voltages generated by the inverter are
practically equal to the grid voltages. During this initial
synchronization stage, the filter inductors are not used. Instead, the
control algorithm creates virtual stator coils between the synchronous
internal voltage and the grid, which carry virtual currents, and the
initial synchronization is carried out using these virtual currents
instead of real currents. Thus, even very high virtual currents that
may arise as a transient phenomenon, do not cause any damage. A
natural question is: will this initial synchronization stage always
succeed? If we simplify this question by assuming a constant field
current and a constant grid frequency, then this question reduces to
the one addressed in this paper. We remark that it is possible to
construct an initial synchronization algorithm, using the results in
this work, that is guaranteed to succeed (the details of such an
algorithm are not included in this paper).

Our conclusions are relevant not only for the initial synchronization
stage, but also for finding a good choice of parameters for the
synchronverter. Indeed, our study shows that it is beneficial to have
stator coils with large inductance in a synchronverter. We shall
indicate in Section \ref{sec7} how to realize the effect of a
large inductor in the control algorithm of the synchronverter, without
actually using a large and expensive filter inductor in the hardware.
As an application, we find a set of stable nominal parameters for a
500kW synchronverter in Example \ref{500kW}.

The motivation for formulating the question of stability of a
grid-connected SG in a global setting (i.e. for arbitrary initial
states) comes from intensive simulations which indicate that for a range
of parameters the SG could be almost globally asymptotically stable.
We wanted to develop a rigorous analytical proof for this numerical
observation about a classical dynamical system, which turned out to be
very challenging. Our sufficient conditions for almost global asymptotic
stability seem to be conservative: according to simulations, there are
grid-connected SGs that do not satisfy our conditions, but nevertheless
appear to be almost globally asymptotically stable. Also, it is easy to
find such systems that have a locally stable equilibrium point but are
not almost globally asymptotically stable. It is more difficult, but
still possible, to find such systems whose equilibrium points are all
unstable. Examples of systems described above are in Section \ref{sec7}.

%%%%%%%%%%**********%%%%%%%%%%**********%%%%%%%%%%**********%%%%%%%%%%
\section{\secp Model of a SG connected to an infinite bus}
\label{sec2} % Section 2

\ \ \ Detailed mathematical models for synchronous machines can be
found in \cite{Fitzgerald:03,GrSt:94,KoNa:04,Kun:94,Walker:94}. In
this section we will briefly derive the equations for a grid connected
synchronous generator, as required in this work, using the notation
and sign conventions in \cite{MaWe:15,ZhWe:11}. We consider a SG
with round (non-salient pole) rotor and, for the sake of simplicity,
assume that the generator has one pair of field poles. The generator
is ``perfectly built'', meaning that in each stator winding, the flux
caused by the rotor is a sinusoidal function of the rotor angle
$\theta$ (with shifts of $\pm 2\pi/3$ between the phases of course).
The rotor current $i_f>0$ is assumed to be constant (or equivalently,
the rotor is a permanent magnet). The stator windings are connected
in star, with no neutral connection, and there are no damper windings.

Figure 1 shows the structure of the SG being considered. The stator
windings have self-inductance $L>0$, mutual inductance $-M<0$ and
resistance $R_s>0$. (The typical value for $M$ is $L/2$.) We define
$L_s=L+M$. A current in a stator winding is considered positive if
it flows outwards (see Figure 1). The vectors $e=[\m e_a\ \ e_b\ \
e_c]^\top$, $v=[\m v_a\ \ v_b\ \ v_c]^\top$ and $i=[\m i_a\ \ i_b\ \
i_c]^\top$ are the electromotive force (also called the synchronous
internal voltage), stator terminal voltage and stator current,
respectively. The voltage at the (unconnected) center of the star is
denoted by $v_s$. Let $v^n=[\m v_s\ \ v_s\ \ v_s]^\top$. Then, using
$i_a+i_b+i_c=0$ (there is no neutral line), we have \vspace{-2.5mm}
\BEQ{voltam_Savtanal}
   L_s \dot i+R_s i \m=\m e-v+v^n \m. \vspace{-1mm}
\end{equation}
Note that if the synchronous generator is connected to the infinite
bus via an impedance that consists of a resistor and an inductor in
series, then these can be regarded as being parts of $R_s$ and $L_s$,
respectively.

%%%%%%%%%%%%%%%%%% *** *** FIGURE 1 *** *** %%%%%%%%%%%%%%%%%% *** %%%
$$\includegraphics[scale=0.25]{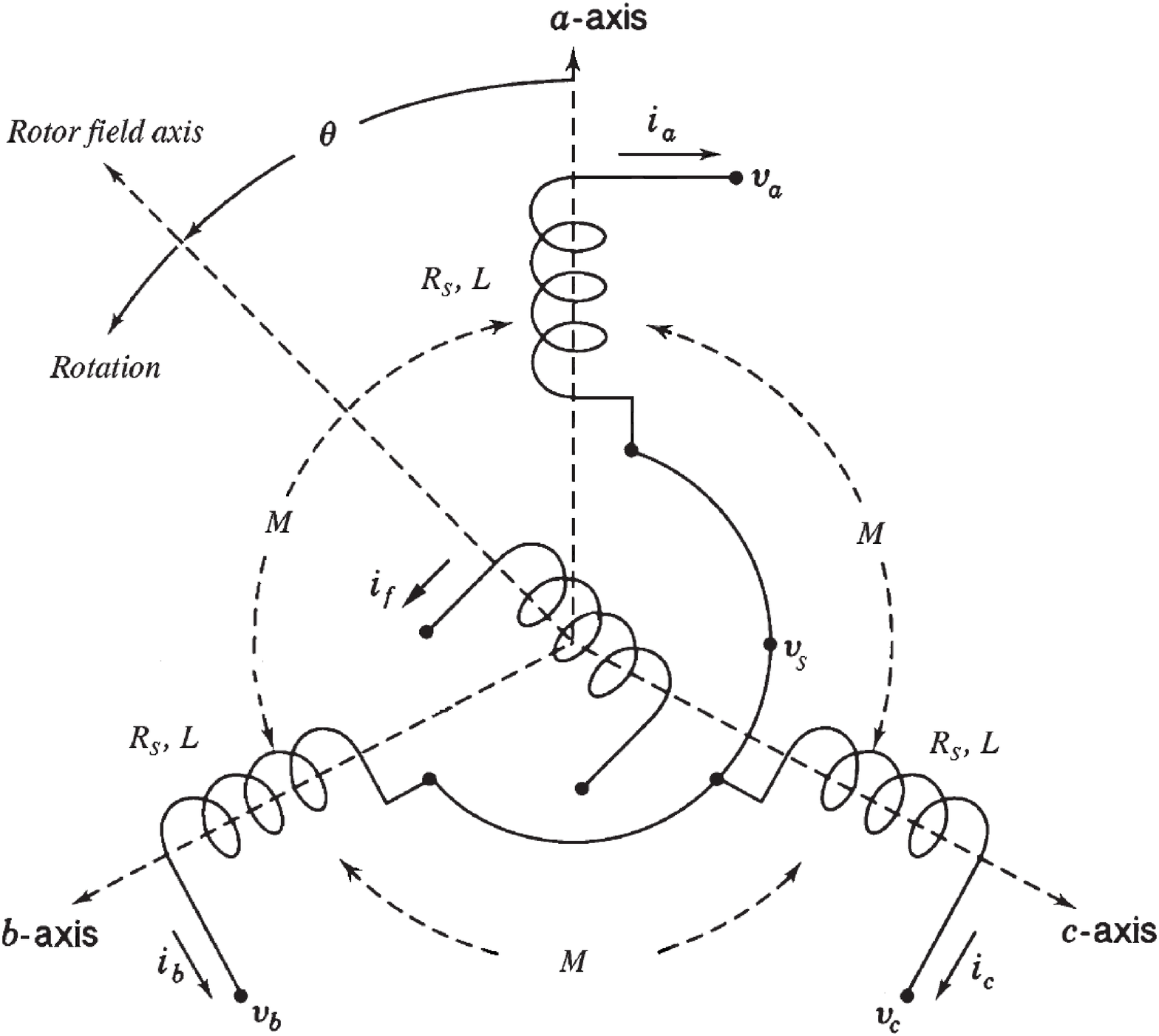} $$
\centerline{ \parbox{5.3in}{\vspace{3mm}
   Figure 1. Structure of an idealized three-phase round-rotor SG,
   modified from \cite[Fig. 3.4]{GrSt:94}. The rotor angle is
   $\theta$ and the field current is $i_f$.}}
\medskip
%%%%%%%%%%%%%%%%%% *** End of Figure 1 *** %%%%%%%%%%%%%%%%%%% *** %%%

Denote the rotor angle by $\theta$ and the angular velocity by $\o$.
The power invariant version of the Park transformation is the unitary
matrix \vspace{-2mm}
$$ U(\theta) \m=\m \sqrt{\frac{2}{3}} \bbm{\cos\theta & \cos(\theta
   -\frac{2\pi}{3}) & \cos(\theta+\frac{2\pi}{3}) \\ -\sin\theta &
   -\sin(\theta-\frac{2\pi}{3}) & -\sin(\theta+\frac{2\pi}{3}) \\ 1/
   \sqrt 2 & 1/\sqrt 2 & 1/\sqrt 2} \m.$$
With the notation $e_{dq}=U(\theta)e$, $v_{dq}=U(\theta)v$, $v^n_{dq}
=U(\theta)v^n$ and $i_{dq}=U(\theta)i$, \rfb{voltam_Savtanal} can be
written as \vspace{-1mm}
\BEQ{dq_eq}
 L_s U(\theta)\dot i+R_s i_{dq} \m=\m e_{dq}-v_{dq}+v^n_{dq} \m.
\end{equation}
Let $e_{dq}=[\m e_d\ \ e_q\ \ e_0]^\top$, $v_{dq}=[\m v_d\ \
v_q\ \ v_0]^\top$ and $i_{dq}=[\m i_d\ \ i_q\ \ i_0]^\top$.
It is easy to check that if $x_{dq}=[\m x_d\ \ x_q\ \ x_0]^\top=
U(\theta)x$, then regardless of the physical meaning of $x$
\vspace{-2mm}
$$ \frac{\dd}{\dd t}\bbm{x_d\\ x_q\\ x_0} \m=\m U(\theta)\dot x + \o
   \bbm{x_q\\ -x_d\\ 0} \m. \vspace{-2mm}$$
This, the easily verifiable expression $v^n_{dq}=[\m 0\ \ 0 \ \
\sqrt{3}v_s]^\top$ and  \rfb{dq_eq} yield \vspace{-2mm}
\BEQ{Malaysia_plane}
   L_s \dot i_d \m=\m -R_s i_d + \o L_s i_q + e_d-v_d \m,\qquad
   L_s \dot i_q \m=\m -\o L_s i_d - R_s i_q + e_q-v_q \m.
\end{equation}
Note that $i_0=0$ since $i_a+i_b+i_c=0$ and hence $e_0=v_0-\sqrt{3}
v_s$. Since the rotor current $i_f$ is constant, it can be shown
that \vspace{-2mm}
\BEQ{sync_burned}
   e \m=\m M_f i_f\o \bbm{\sin\theta\\ \sin(\theta-\frac{2\pi}{3})\\
   \sin(\theta+\frac{2\pi}{3})} \m, \vspace{-2mm}
\end{equation}
where $M_f>0$ is the peak mutual inductance between the rotor winding
and any one stator winding (see \cite[equation (4)]{ZhWe:11}). This,
by a short computation, gives \vspace{-3mm}
\BEQ{rockets_from_Gaza}
   e_d \m=\m 0 \m,\qquad e_q \m=\m -m \o i_f \m, \vspace{-4mm}
\end{equation}
where $m=\sqrt{\frac{3}{2}}M_f$. The rotational dynamics of the
generator is governed by the equation \vspace{-2mm}
\BEQ{hut_in_Galil_reserved}
 J\dot\o \m=\m T_m - T_e - D_p\o \m,
\end{equation}
where $J>0$ is the moment of inertia of all the parts rotating with
the rotor, $T_m>0$ is a mechanical torque constant (see the explanations
further below), $T_e$ is the electromagnetic torque developed by the
generator (which normally opposes the movement) and $D_p>0$ is a
damping factor. $T_e$ can be found from energy considerations, see
for instance \cite[equation (7)]{ZhWe:11}: \vspace{-2mm}
$$ T_e \m=\m -m i_f i_q \m. \vspace{-2mm} $$
The constant $D_p$ is a sum of $D_{p,\m{\rm fric}}>0$ which accounts
for the viscous friction acting on the rotor and $D_{p,\m{\rm droop}}
>0$ which is created by a feedback, called the {\em frequency droop},
from $\o$ to the mechanical torque of the prime mover (as explained
in the cited references). The frequency droop increases the active
power in response to a drop of the grid frequency. Normally,
$D_{p,\m{\rm droop}}$ is much larger than $D_{p,\m{\rm fric}}$. The
actual active mechanical torque $T_a$ coming from the prime mover is
$T_m-D_{p,\m{\rm droop}}\o$. Substituting the expression for $T_e$
into \rfb{hut_in_Galil_reserved}, we obtain \vspace{-2mm}
\BEQ{Arye_Levinson_Karat}
   J\dot\o \m=\m m i_f i_q - D_p\o + T_m \m. \vspace{-2mm}
\end{equation}

The stator terminals are connected to the grid. Denote the grid
voltage magnitude and angle by $V$ and $\theta_g$, respectively.
By this we mean that the components of $v$ are
\vspace{-2mm}
$$ v_a \m=\m \sqrt{\frac{2}{3}}V \sin\theta_g \m,\qquad v_b \m=\m
   \sqrt{\frac{2}{3}}V \sin(\theta_g-\frac{2\pi}{3})\m, \qquad
   v_c \m=\m \sqrt{\frac{2}{3}}V \sin(\theta_g+\frac{2\pi}{3})\m.$$
Define the angle difference $\delta$, called {\em the power angle},
as \m $\delta=\theta-\theta_g$\m. Applying the Park transformation
to $v$, we get \vspace{-2mm}
$$ v_d \m=\m -V\sin\delta \m,\qquad v_q \m=\m -V\cos\delta \m.$$
Substituting this and \rfb{rockets_from_Gaza} into
\rfb{Malaysia_plane} gives
$$ L_s \dot i_d \m=\m -R_s i_d + \o L_s i_q + V\sin\delta \m,\qquad
   L_s \dot i_q \m=\m -\o L_s i_d-R_s i_q -m\o i_f+V\cos\delta\m.$$
Denoting $\o_g=\dot\theta_g$ (the grid frequency), it is clear from
the definition of $\delta$ that \vspace{-1mm}
\BEQ{smaller_K}
  \dot\delta \m=\m \o-\o_g \m. \vspace{-1mm}
\end{equation}
The last three equations together with \rfb{Arye_Levinson_Karat} can
be written in matrix form:
\BEQ{eq:SG} \m\hspace{-3mm}
  \bbm{L_s\dot{i_d}\\ L_s\dot{i_q}\\ J\dot\o\\ \dot\delta} \nm=\nm
  \bbm{-R_s & \o L_s & 0 & 0\\ -\o L_s & -R_s & -m i_f & 0\\ 0 & m
  i_f & -D_p & 0\\ 0 & 0 & 1 & 0} \bbm{i_d\\ i_q\\ \o\\ \delta} +
  \bbm{V\sin\delta\\ V\cos\delta\\ T_m \\ -\o_g} \m.
\end{equation}
The above fourth order nonlinear dynamical system, with state
variables $i_d,i_q,\o$ and $\delta$ is our model for a grid
connected synchronous generator. In a synchronous generator we may
control $i_f$ indirectly via the rotor voltage (this adds $i_f$ as
one more state variable to the system) and we may control also $D_p$
and $T_m$ (though not instantly). In a synchronverter we may control
$i_f$, $D_p$, $T_m$ and even $J$ instantly, but in this study they
are considered to be positive constants.

%%%%%%%%%%**********%%%%%%%%%%**********%%%%%%%%%%**********%%%%%%%%%%
\section{\secp Equilibrium points of the SG model} \label{sec3}
% Section 3

\ \ \ The right side of the SG model \rfb{eq:SG} is a locally Lipschitz
function on its state space $\rline^4$. For any $(i_{d0},i_{q0},\o_0,
\delta_0)\in\rline^4$, it follows from standard wellposedness results
(see for instance \cite[Ch.~3]{Kha:02}) that there exists a unique
solution $(i_d,i_q,\o,\delta)$ for \rfb{eq:SG} defined on a maximal time
interval $[0,T_{\max})$, with $T_{\max}>0$, such that $(i_d(0),i_q(0),
\o(0),\delta(0))=(i_{d0},i_{q0},\o_{0},\delta_{0})$. We will show, via
contradiction, that $T_{\max}=\infty$. To this end, suppose that
$T_{\max}$ is finite. For each $t\in[0,T_{\max})$ define $W(t)= (L_s
i_d^2(t)+ L_s i_q^2(t)+J\o^2(t))/2$. Then
$$  \dot W(t) \m=\m -R_s(i_d^2(t)+i_q^2(t))-D_p\o^2(t)+Vi_d(t)\sin
    \delta(t)+Vi_q(t)\cos\delta(t)+T_m\o(t) $$
for all $t\in[0,T_{\max})$. Define $C=V^2/(2R_s)+T_m^2/(4D_p)$.
Clearly
$$  \dot W(t) \m\leq\m -R_s\left(|i_d(t)|-\frac{V}{2R_s}\right)^2
    -R_s\left(|i_q(t)|-\frac{V}{2R_s}\right)^2 - D_p \left(|\o(t)|-
    \frac{T_m}{2D_p}\right)^2 + C $$
which shows that if either $|i_d(t)|$, $|i_q(t)|$ or $|\o(t)|$ is
sufficiently large, then $\dot W(t)<0$. In other words, if $W(t)$ is
sufficiently large, then $\dot W(t)<0$. Therefore $W$ (and hence also
$i_d$, $i_q$ and $\o$) are bounded on $[0,T_{\max})$. Since $T_{\max}$
is finite, it follows from \rfb{smaller_K} that $\delta$ must also be
bounded on $[0, T_{\max})$. Hence $(i_d,i_q,\o,\delta)$ are bounded
functions on $[0,T_{\max})$, which contradicts \cite[Corollary II.3]
{JaWe:09}. Therefore $T_{\max}=\infty$. So for all initial conditions
there exists a unique global (in time) solution for \rfb{eq:SG}.

Denote $p=R_s/L_s$. Let the angle $\phi\in(0,\pi/2)$ be determined by
the equations
\BEQ{phi_def}
   \sin\phi \m=\m \frac{\o_g}{\sqrt{p^2+\o_g^2}} \m, \qquad
   \cos\phi \m=\m \frac{p}{\sqrt{p^2+\o_g^2}} \m.
\end{equation}
Any equilibrium point $(i_d^e,i_q^e,\o^e,\delta^e)$ of \rfb{eq:SG}
must satisfy
\BEQ{fsquared1}
  \o^e \m=\m \o_g \m, \qquad i_q^e \m=\m \frac{D_p\o_g-T_m}{mi_f},
  \qquad i_d^e \m=\m \frac{\o_g (D_p\o_g-T_m)}{m i_f p} +
  \frac{V\sin\delta^e}{R_s}  \m,
\end{equation}
\BEQ{fsquared2}
  \cos(\delta^e+\phi) \m=\m \frac{(D_p\o_g-T_m)}{mi_f} \frac{L_s\sqrt
  {p^2+\o_g^2}}{V} + \frac{mi_f\o_g p}{V\sqrt{p^2+\o_g^2}} \m.
\end{equation}

Denote the right side of \rfb{fsquared2} by $\L$. Depending on $|\L|$,
\rfb{fsquared2} has either zero, one or two solutions, modulo $2\pi$.
For typical sets of SG parameters $|\L|<1$ and \rfb{fsquared2} has two
solutions $\delta^{e,1}=\l-\phi$ and $\delta^{e,2}=-\l-\phi$.  Here
$\l\in(0,\pi)$ is such that $\cos\l=\L$. Corresponding to these two
solutions, two equilibrium points $(i_d^{e,1},i_q^e,\o_g,\delta^{e,
1})$ and $(i_d^{e,2},i_q^e,\o_g,\delta^{e,2})$ for \rfb{eq:SG} can be
determined using \rfb{fsquared1}. If $(i_d^e,i_q^e, \o_g,\delta^e)$ is
an equilibrium point for \rfb{eq:SG}, then so is $(i_d^e,i_q^e,\o_g,
\delta^e+2k\pi)$ for any integer $k$. Therefore, when $|\L|<1$ there
are in fact two sequences of equilibrium points for \rfb{eq:SG} in
$\rline^4$. In general depending on $|\L|$, like the pendulum equation
with constant forcing, \rfb{eq:SG} has either zero, one or two
sequences of equilibrium points and in any such sequence the last
component $\delta$ differs by an integer multiple of $2\pi$.

An equilibrium point of \rfb{eq:SG} is called {\em locally
exponentially stable} (in short: stable) if the linearization
of the system around this point is exponentially stable.

The linearization of \rfb{eq:SG} around an equilibrium point
$(i_d^e,i_q^e,\o_g,\delta^e)$ is
\BEQ{linearized_SG} \m\hspace{-3mm}
  \bbm{\dot{x_1}\\ \dot{x_2}\\ \dot x_3\\ \dot x_4} \nm=\nm
  \bbm{-p & \o_g & i_q^e & (V\cos\delta^e)/L_s \\ -\o_g & -p &
  -i_d^e-m i_f/L_s & -(V\sin\delta^e)/L_s\\ 0 & m i_f/J & -D_p/J & 0\\
  0 & 0 & 1 & 0} \bbm{x_1\\ x_2\\ x_3\\ x_4} \m,
\end{equation}
where $x_1=i_d-i_d^e$, $x_2=i_q-i_q^e$, $x_3=\o-\o_g$ and $x_4=\delta-
\delta^e$. The characteristic polynomial of the matrix in
\rfb{linearized_SG} is $s^4+a_3 s^3+a_2 s^2+a_1 s+a_0$, where
\vspace{-1mm}
\BEQ{seedless_grapes}
   a_0 \m=\m \frac{mi_f V\sqrt{p^2\sbluff+\o_g^2}}{J L_s} \sin
   (\delta^e + \phi) \m.
\end{equation}
%a_3 \m=\m \frac{D_p}{J}+2p \m, \\[3pt]
%a_1 \m=\m \frac{2mi_f V}{JL_s}\sin\delta_e + \frac{(p^2+\o_g^2)D_p}
%   {J} + \frac{2\o_g(D_p\o_g-T_m)}{J} + \frac{m^2i_f^2 p}{JL_s} \m,
%   \\[3pt]
%a_2 \m=\m p^2+\o_g^2+\frac{2pD_p}{J}+\frac{mi_f}{J}\left(\frac{\o_g
%   (D_p\o_g-T_m)}{m i_f p} + \frac{V\sin\delta^e}{R_s}+\frac{mi_f}
%   {L_s}\right) \m.
The equilibrium point $(i_d^e,i_q^e,\o_g,\delta^e)$ is stable if all
the roots of the above characteristic polynomial are in the open left
half complex plane. For this to occur it is necessary (but not
sufficient) that $a_0,a_1,a_2,a_3>0$. Note that if
$(i_d^e,i_q^e,\o_g,\delta^e)$ is a stable (or unstable) equilibrium
point, then so is $(i_d^e,i_q^e,\o_g,\delta^e+2k\pi)$ for any
$k\in\zline$. When $|\L|<1$, the sign of $a_0$ when $\delta^e=
\delta^{e,1}$ is opposite the sign of $a_0$ when $\delta^e=\delta^
{e,2}$, so that \rfb{eq:SG} has at least one sequence of unstable
equilibrium points. It is also possible that \rfb{eq:SG} has two
sequences of unstable equilibrium points and no stable equilibrium
point (see end of Section \ref{sec7} for an example). Apart from
equilibrium points, simulations show that when $|\L|<1$, \rfb{eq:SG}
can have attracting periodic orbits (see Section \ref{sec7}). Hence
the global phase portrait of \rfb{eq:SG} can be quite complicated.

\begin{remark} \label{night_run}
Recall the currents $i=[\m i_a\ \ i_b\ \ i_c]^\top$ and $i_{dq}=[\m
i_d\ \ i_q\ \ 0]^\top$ and the Park transformation $U(\theta)$ from
Section \ref{sec2}. Since $i=U(\theta)^\top i_{dq}$ and
$\theta=\delta+\theta_g$, it follows that each equilibrium point of
\rfb{eq:SG} corresponds to a periodic state trajectory of the
grid-connected SG, if we use the state variables $(i_a,i_b,\o,\theta)$
(with $i_c=-i_a-i_b$). This periodic trajectory is stable (unstable)
if the equilibrium point of \rfb{eq:SG} is stable (unstable). From the
earlier discussion we get that when $|\L|<1$, if we measure all the
angles modulo $2\pi$, then the grid-connected SG with state variables
$(i_a,i_b,\o,\theta)$ has two unique periodic state trajectories and
at least one of them is unstable.
\end{remark}

\begin{remark} \label{red_model}
Often in the literature on the control of power systems, the stator
currents $i_d$ and $i_q$ are viewed as fast variables and (using
singular perturbation theory) algebraic expressions are derived for
them. If we follow this approach then, by substituting an algebraic
expression for $i_q$ in the differential equation
\rfb{Arye_Levinson_Karat}, we get a second order nonlinear differential
equation in $\delta$ as a reduced order approximation for the SG
model \rfb{eq:SG}. When $|\L|<1$, unlike the SG model, this
nonlinear equation always has one sequence of stable equilibrium
points and one sequence of unstable equilibrium points. So the SG
model and its second order approximation can exhibit fundamentally
different local and global dynamics for some SG parameters.
This suggests that any controller designed using a reduced order
model that approximates $i_d$ and $i_q$ must be validated for its
performance on the full model.
\end{remark}

\begin{definition} \label{aGAS_defn}
The SG model \rfb{eq:SG} is {\em almost globally asymptotically
stable} if all its state trajectories, except those starting from a
set of measure zero and converging to an unstable equilibrium point,
converge to a stable equilibrium point.
\end{definition}

Note that this definition allows multiple stable and unstable
equilibrium points, but it does not allow limit cycles or unbounded
state trajectories.

Extensive simulations suggest that for a range of SG parameters
\rfb{eq:SG} is almost globally asymptotically stable (aGAS). Our
objective is to develop a practical test for verifying if for
a given set of SG parameters \rfb{eq:SG} is aGAS. In this regard, our
main result is Theorem \ref{stab_cond2} (also see Remark \ref{CDC_thm}).
Clearly if \rfb{eq:SG} is aGAS, then irrespective of initial conditions
the SG rotor eventually synchronizes with the grid.

\begin{definition} \label{hyper}
An equilibrium point $(i_d^e,i_q^e,\o_g,\delta^e)$ of \rfb{eq:SG} is
{\em hyperbolic} if all the eigenvalues of the matrix on the right
side of \rfb{linearized_SG} have non-zero real part.
\end{definition}

For typical SG parameters, all the equilibrium points of \rfb{eq:SG}
are hyperbolic.

\begin{lemma} \label{stable_manifold}
If all the equilibrium points of \rfb{eq:SG} are hyperbolic and
every trajectory of \rfb{eq:SG} converges to some equilibrium point,
then \rfb{eq:SG} is aGAS.
\end{lemma}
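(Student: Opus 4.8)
The plan is to combine the Hartman--Grobman theorem (to handle neighborhoods of the equilibria) with a Lyapunov/LaSalle-type argument that establishes boundedness of trajectories and a measure-theoretic statement about the stable manifolds of the unstable equilibria. First I would recall that by the computation already carried out in Section~\ref{sec3}, every trajectory of \rfb{eq:SG} is bounded on $[0,\infty)$ (the function $W$ is a coercive Lyapunov-like function whose derivative is negative outside a compact set), and the set of equilibrium points, modulo the $2\pi$-periodicity in $\delta$, is finite. Since the hypothesis says every trajectory converges to \emph{some} equilibrium point, the $\omega$-limit set of any trajectory is a single equilibrium. Thus the state space $\rline^4$ is the disjoint union of the basins of attraction of the (countably many) equilibria, and it suffices to show that the union of the basins of the \emph{unstable} equilibria has Lebesgue measure zero (then the complement, which is the union of the basins of the stable equilibria, has full measure, and each such basin is open by hyperbolicity, so its points are ``good'').

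Next, because each equilibrium point is hyperbolic, the Hartman--Grobman theorem gives a local topological conjugacy between the flow of \rfb{eq:SG} near the equilibrium and the flow of its linearization \rfb{linearized_SG}. For an unstable hyperbolic equilibrium the linearized matrix has at least one eigenvalue with positive real part, so its stable subspace has dimension $k\le 3$; by the stable manifold theorem the local stable manifold $W^s_{\rm loc}$ of the equilibrium is an embedded $C^1$ submanifold of dimension $k\le 3$, hence a Lebesgue-null subset of $\rline^4$. The global stable manifold (the full basin of attraction of that unstable equilibrium) is $W^s=\bigcup_{t\ge 0}\varphi_{-t}(W^s_{\rm loc})$, where $\varphi_t$ is the flow; this is a countable union of diffeomorphic images of a null set, and since the flow maps (being $C^1$ diffeomorphisms) preserve the class of Lebesgue-null sets, $W^s$ is again null. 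Taking the union over the countably many unstable equilibria keeps the set null.

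The remaining point is to argue that a trajectory that is not on any of these stable manifolds, and does not start exactly at an equilibrium, must converge to a stable equilibrium: by hypothesis it converges to \emph{some} equilibrium $\xi$; if $\xi$ were unstable then, by the local topological conjugacy near $\xi$, a trajectory entering a small neighborhood of $\xi$ and staying there (which it must, eventually, if it converges to $\xi$) lies in the local stable set of $\xi$, forcing the whole trajectory into $W^s(\xi)$ --- contradiction. Hence $\xi$ is stable. Putting the pieces together: outside a measure-zero set (the union of stable manifolds of unstable equilibria, together with the at most countable set of unstable equilibria themselves) every trajectory converges to a stable equilibrium, which is exactly the definition of aGAS in Definition~\ref{aGAS_defn}.

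The main obstacle is the bookkeeping required to pass from the \emph{local} stable manifold to the \emph{global} basin and to control the countably many translated copies of equilibria: one must verify that $\varphi_{-t}$ is defined on all of $\rline^4$ (which follows from the global existence already proved), that it is a $C^1$ diffeomorphism (so it is absolutely continuous and maps null sets to null sets), and that a countable union indexed by time and by the equilibrium lattice still yields a null set. A secondary subtlety is making the ``convergence to an unstable equilibrium implies membership in its stable manifold'' step rigorous; this is where hyperbolicity (via Hartman--Grobman, or via the local stable/unstable manifold structure) is essential, since without it the local picture near the equilibrium need not be a graded manifold decomposition.
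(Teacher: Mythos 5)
Your proposal is correct and takes essentially the same approach as the paper: apply the stable manifold theorem at each unstable hyperbolic equilibrium to get a measure-zero stable manifold, take a countable union over the equilibria, and conclude from the hypothesis that everything else converges to a stable equilibrium. The paper invokes a version of the stable manifold theorem (Szlenk) that yields the \emph{global} stable manifold directly as the image of a $C^1$ injective map from $\rline^k$ with $k<4$, and then uses Sard's theorem to get measure zero, so it bypasses both the Hartman--Grobman theorem and the backward-flow pullback you use; these are minor technical variants of the same idea, and the Hartman--Grobman step is not actually needed.
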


\begin{proof}
By assumption \rfb{eq:SG} has equilibrium points and so $|\L|\leq1$
($|\L|$ is defined below \rfb{fsquared2}). If $|\L|=1$, then for each
equilibrium point $(i_d^e,i_q^e,\o_g,\delta^e)$ of \rfb{eq:SG} we have
$a_0=0$ ($a_0$ is introduced below \rfb{linearized_SG}) meaning that
the equilibrium point is not hyperbolic, contradicting the assumption
in the lemma. Thus we can conclude that $|\L|<1$. From our earlier
discussion, we get that \rfb{eq:SG} has a sequence of unstable
equilibrium points. Let $z^e=(i_d^e,i_q^e,\o_g,\delta^e)$ be an
unstable equilibrium point. It then follows from the stable manifold
theorem \cite[Theorem 1.7.2]{Szl:84} that the set of initial
conditions for which the trajectory of \rfb{eq:SG} converges to $z^e$
is the image of a $C^1$ injective map from $\rline^k\to\rline^4$, with
$k<4$. Using Sard's theorem \cite[Theorem 4.1]{Sar:42} we conclude
that this set, called the stable manifold of $z^e$, has Lebesgue
measure zero. Let $\Mscr$ be the union of the stable manifolds of all
the unstable equilibrium points of \rfb{eq:SG}. Since the set of
unstable equilibrium points is countable, $\Mscr$ has measure zero.
Since every trajectory of \rfb{eq:SG} converges to an equilibrium
point, it follows that \rfb{eq:SG} must have a sequence of stable
equilibrium points and all trajectories of \rfb{eq:SG} except those
starting from $\Mscr$ converge to these stable equilibrium points.
\end{proof}

If $(i_d,i_q,\o,\delta)$ is the solution of \rfb{eq:SG} for the
initial state $(i_d(0),i_q(0),\o(0),\delta(0))$, then $(i_d,i_q,\o,
\delta+2\pi)$ is the solution for the initial state $(i_d(0),i_q(0),
\o(0),\delta(0)+2\pi)$. Thus, in the terminology of \cite[Definition
2.4.1]{LePoSm:96}, \rfb{eq:SG} is a pendulum-like system.

%%%%%%%%%%**********%%%%%%%%%%**********%%%%%%%%%%**********%%%%%%%%%%
\section{\secp An exact swing equation for the SG} \label{sec4}
% Section 4

\ \ \ In this section, starting from \rfb{eq:SG} we will derive an
integro-differential equation governing the power angle $\delta$, that
resembles the nonlinear pendulum equation with forcing. It is a
version of the classical swing equation (see \cite{Kun:94,ZhOh:09})
obtained by using the precise expressions for the mechanical torque
and the electrical torque.

Recall $p=R_s/L_s$. The first two equations in \rfb{eq:SG} can then
be written as
\BEQ{eq:currents}
  \bbm{\dot{i_d} \\ \dot{i_q}} \m=\m \bbm{-p & \omega\\
  -\o & -p} \bbm{i_d \\ i_q}-\bbm{0 \\ \frac{mi_f\o}{L_s}} +
  \frac{V}{L_s}\bbm{\sin\delta \\ \cos\delta} \m.
\end{equation}
We regard $\o$ and $\delta$ as continuous exogenous signals in
\rfb{eq:currents}. Therefore \rfb{eq:currents} is a linear
time-varying system with state matrix
$$  A(t) \m=\m  \bbm{-p & \o(t)\\ -\o(t) & -p} \m.  $$
Clearly $A(t_1)A(t_2) \m=\m A(t_2)A(t_1)$ for all $t_1, t_2 \geq0$.
So an explicit expression for the state transition matrix $\Phi(t,
\tau)$ generated by $A$ can be computed to be
\begin{align*}
  \Phi(t,\tau) &\m=\m e^{\int_\tau^t A(\sigma)\dd \sigma} \m=\m
    e^{\bbm{-p(t-\tau) & \int_\tau^t\omega(\sigma)\dd\sigma
    \\ -\int_\tau^t\omega(\sigma)\dd\sigma & -p(t-\tau)}} \\[4pt]
  &=\m e^{-p(t-\tau)} \bbm{\cos\left(\int_\tau^t\omega(\sigma)
    \dd\sigma\right) & \sin\left(\int_\tau^t\omega(\sigma)\dd\sigma
    \right)\\ -\sin\left(\int_\tau^t\omega(\sigma)\dd\sigma\right) &
    \cos\left(\int_\tau^t\omega(\sigma)\dd\sigma\right)}
    \FORALL t,\tau\geq0 \m.
\end{align*}
For any initial state $[i_d(0)\ \ i_q(0)]^\top$ and some functions
$\delta$ and $\omega$, the unique solution of \rfb{eq:currents} is
given by the expression
\BEQ{curr_soln}
  \bbm{i_d(t)\\i_q(t)} \m=\m \Phi(t,0)\bbm{i_d(0)\\i_q(0)}+\int_0^t
  \Phi(t,\tau) \left(\bbm{0\\-\frac{mi_f\o(\tau)}{L_s}} + \frac{V}
  {L_s}\bbm{\sin\left(\delta(\tau)\right) \\ \cos\left(\delta(\tau)
  \right)}\right) \dd\tau
\end{equation}
for each $t\geq0$. The first term under the integral in \rfb{curr_soln},
sans the constant $\frac{-mi_f}{L_s}$, can be written as
$$  \hspace{-25mm}\int_0^t\Phi(t,\tau)\bbm{0 \\ \o(\tau)}\dd\tau \m=\m
    \int_0^t e^{-p(t-\tau)}\bbm{\sin\left(\int_{\tau}^t\o(\sigma)\dd
    \sigma\right)\o(\tau) \\ \cos\left(\int_{\tau}^t\o(\sigma)\dd\sigma
    \right)\o(\tau)} \dd\tau$$
\BEQ{rain}
   \hspace{1mm}=\m e^{-p t}\bbm{e^{p\tau}\cos\left(\int_{\tau}^t\o
   (\sigma)\dd\sigma\right) \\ -e^{p\tau}\sin\left(\int_{\tau}^t
   \o(\sigma)\dd\sigma\right)}_{\tau=0}^{\tau=t} + p\int_0^t e^{-p(t-
   \tau)} \bbm{-\cos\left(\int_{\tau}^t \o(\sigma)\dd\sigma\right) \\
   \sin\left(\int_{\tau}^t\o(\sigma)\dd\sigma\right)} \dd\tau \m.
   \vspace{2mm}
\end{equation}
Using the expression $\delta(\tau)=\delta(0)+\int_0^\tau\o(\sigma)\dd
\sigma-\o_g\tau$ for all $\tau\geq0$, the second term under the integral
in \rfb{curr_soln}, sans the constant $\frac{V}{L_s}$, can be written as
\vspace{-3mm}
$$  \hspace{-15mm}\int_0^t\Phi(t,\tau)\bbm{\sin\left(\delta(\tau)\right)
    \\ \cos\left(\delta(\tau)\right)}\dd\tau \m=\m \int_0^t
    e^{-p(t-\tau)} \bbm{\sin\left(\int_\tau^t\o(\sigma)\dd\sigma+\delta
    (\tau)\right)\\ \cos\left(\int_\tau^t\o(\sigma)\dd\sigma+\delta
    (\tau)\right)}\dd\tau $$
$$  \hspace{-35mm}=\m \int_0^t e^{-p(t-\tau)} \bbm{\sin\left(\int_0^t
    \o(\sigma)\dd\sigma+\delta(0)-\o_g\tau\right) \\ \cos\left(\int_0^t
    \o(\sigma)\dd\sigma+\delta(0)-\o_g\tau\right)}\dd\tau $$
$$  \hspace{-32mm}=\m \frac{p e^{-p t}}{\left(p^2+\o_g^2 \right)}\bbm
    {e^{p\tau}\sin\left(\int_0^t\o(\sigma)\dd\sigma+\delta(0)-\o_g\tau
    \right)\\ e^{p\tau}\cos\left(\int_0^t\o(\sigma)\dd\sigma+\delta(0)
    -\o_g\tau\right)}_{\tau=0}^{\tau=t}$$
$$  +\m\frac{\o_g e^{-p t}}{\left(p^2+\o_g^2\right)}\bbm{e^{p\tau}\cos
    \left(\int_0^t\o(\sigma)\dd\sigma+\delta(0)-\o_g\tau\right)\\ -e^{p
    \tau}\sin\left(\int_0^t\o(\sigma)\dd\sigma+\delta(0)-\o_g\tau\right)
    }_{\tau=0}^{\tau=t} \m. \vspace{1mm}$$
Using the angle $\phi$ introduced in \rfb{phi_def}, the above equation
can be written as \vspace{-2mm}
$$  \int_0^t\Phi(t,\tau)\bbm{\sin\left(\delta(\tau)\right) \\ \cos
    \left(\delta(\tau)\right)}\dd\tau = \frac{e^{-p t}}{\sqrt{p^2+
    \o_g^2}} \bbm{e^{p\tau}\sin\left(\int_0^t\o(\sigma)\dd\sigma+\delta
    (0)\nm-\o_g\tau+\phi\right)\\ e^{p\tau}\cos\left(\int_0^t\o(\sigma)
    \dd\sigma + \delta(0)\nm-\o_g\tau+\phi\right)}_{\tau=0}^{\tau=t}.$$
Putting together \rfb{curr_soln}, \rfb{rain} and the last equation,
and using the notation \vspace{-2mm}
\BEQ{iv_defn}
   i_v \m=\m \frac{V}{L_s \sqrt{p^2+\o_g^2}} \m,\vspace{-2mm}
\end{equation}
we obtain that for all $t\geq0$ \vspace{-2mm}
$$ \hspace{-35mm} i_q(t) \m=\m i_v \cos\left( \int_0^t\o(\sigma)\dd
   \sigma + \delta(0)-\o_g t+\phi\right) $$
$$ -\frac{mi_f p}{L_s}\int_0^t e^{-p (t-\tau)} \sin\left( \int
_{\tau}^t \o(\sigma)\dd\sigma\right)\dd\tau+e^{-p t} f(t) \m,$$
where \vspace{-4mm}
$$ f(t) \m=\m -\sin\left(\int_0^t\o(\sigma)\dd\sigma\right) i_d(0)
   + \cos\left(\int_0^t \o(\sigma)\dd\sigma\right)i_q(0) \qquad\m$$
\BEQ{f_defn}
  \m\hspace{30mm} - \frac{mi_f}{L_s} \sin\left(\int_0^t \o(\sigma)\dd
  \sigma\right) - i_v \cos\left(\int_0^t\o(\sigma) \dd\sigma +
  \delta(0) + \phi\right) .
\end{equation}
Clearly $f$ is a continuous function of time that depends on $\o$, but
nevertheless can be bounded with a constant independent of $\o$.
Substituting for $i_q(t)$ in the equations for $\o$ and $\delta$ in
\rfb{eq:SG} we obtain the following integro-differential equation for
$\delta(t)$: \vspace{-2mm}
$$ \hspace{-16mm} J\ddot\delta(t) + D_p\dot\delta(t) - m i_f i_v \cos
   \left(\delta(t)+\phi\right) \m=\m T_m-D_p\o_g$$
$$ \m\qquad - \frac{m^2 i_f^2 p}{L_s}\int_0^t e^{-p (t-\tau)}
   \sin\left(\int_{\tau}^t\o(\sigma)\dd\sigma\right)\dd\tau
   + m i_f e^{-p t} f(t) \m.$$
If we introduce the new variable $\eta$ by \vspace{-2mm}
\BEQ{psi_defn}
  \eta(t) \m=\m \frac{3\pi}{2} + \delta(t) + \phi \vspace{-1.5mm}
\end{equation}
so that $\dot\eta(t)=\o(t)-\o_g$, then the above equation becomes
\vspace{-1mm}
$$ \hspace{-12mm} J\ddot\eta(t) + D_p\dot\eta(t) + m i_f i_v \sin
   \eta(t) \m=\m T_m-D_p\o_g + mi_f e^{-p t} f(t)\vspace{-2mm}$$
\BEQ{SG_Pend}
   \m\quad - \frac{m^2 i_f^2 p}{L_s} \int_0^t e^{-p (t-\tau)}
   \sin\left[\eta(t)-\eta(\tau)+\o_g(t-\tau)\right] \dd\tau \m.
\end{equation}
We will refer to \rfb{SG_Pend} as the {\em exact swing equation}
(ESE). For all initial conditions $(\eta(0),\dot\eta(0))$ and
every function $f$ given by \rfb{f_defn} for some $i_d(0)$ and
$i_q(0)$, there exists a unique global solution $(\eta,\dot\eta)$
for ESE. Indeed $(\eta,\dot\eta)=(3\pi/2+\delta+\phi,\dot\delta)$,
where $\delta$ is such that $(i_d,i_q,\o,\delta)$ is the unique
solution of \rfb{eq:SG} for the initial condition $(i_d(0),i_q(0),
\dot\eta(0)+\o_g,\eta(0)-\phi-3\pi/2)$. Clearly there is a 1-1
correspondence between the solutions of \rfb{eq:SG} and the solutions
of \rfb{SG_Pend} when $f$ is given by \rfb{f_defn}.

The integral in \rfb{SG_Pend} may be regarded as the output of a
first order low-pass filter (with corner frequency $p$) driven by a
bounded input, so that it is bounded. If we regard the right side
of \rfb{SG_Pend} as a bounded exogenous function, then \rfb{SG_Pend}
is a forced pendulum equation. In the next section, we derive
certain bounds to quantify the asymptotic response of forced
pendulum equations. These bounds are applied to \rfb{SG_Pend} in
Section \ref{sec6} to establish the main result of this paper.

%%%%%%%%%%**********%%%%%%%%%%**********%%%%%%%%%%**********%%%%%%%%%%
\section{\secp Asymptotic response of a forced pendulum} \label{sec5}
\vspace{-1mm} % Section 5

\ \ \ Consider the forced pendulum equation \vspace{-1mm}
\BEQ{pendulum}
  \ddot\psi(t)+\alpha\dot\psi(t)+\sin\psi(t) \m=\m \beta+\gamma(t)
  \FORALL t\geq0 \m, \vspace{-1mm}
\end{equation}
where $\alpha>0$ and $\beta\in\rline$ are constants and $\gamma\in
L^\infty([0,\infty);\rline)$ is a continuous function of the time $t$
satisfying $\|\gamma\|_{L^\infty}<d$ for some $d\in\rline$. We assume
that $|\beta|+d<1$. Define the angles $\psi_1,\psi_2\in(-\pi/2,
\pi/2)$ so that \vspace{-1mm}
\BEQ{t1-t2}
  \sin\psi_1 \m=\m \beta+d \m, \qquad \sin\psi_2 \m=\m \beta-d \m.
  \vspace{-1mm}
\end{equation}
For any initial state $(\psi(0),\dot\psi(0))$, there is a unique
solution $\psi$ to \rfb{pendulum} on a maximal time interval $[0,
t_{\max})$, according to standard results on ordinary differential
equations (ODEs), see for instance \cite[Ch.~3]{Kha:02}. Since $|
\beta+\gamma(t)-\sin\psi|<|\beta|+d+1$ for all $t\geq0$ and $\alpha>
0$, we get from \rfb{pendulum} (by looking at the linear ODE $\dot z
+\alpha z=u$, with $z=\dot\psi$) that \vspace{-2mm}
\BEQ{JF_election}
  \sup_{t\in[0,\m t_{\max})}|\dot\psi(t)| \m<\m |\dot\psi(0)| +
  \frac{|\beta|+d+1}{\alpha} \m. \vspace{-2mm}
\end{equation}
Hence $|\dot\psi(t)|$ cannot blow up to infinity in a finite time,
and hence the same holds for $|\psi(t)|$. From \cite[Corollary II.3]
{JaWe:09} it follows that $t_{\max}=\infty$. Since $\gamma$ is a
continuous function of time, we get from \rfb{pendulum} that the
function $\psi$ is of class $C^2$.

The aim of this section is to show that if %the damping coefficient
$\alpha$ is sufficiently large, then the solutions $\psi$ of
\rfb{pendulum} are eventually confined to a narrow interval, see
Theorem \ref{main_result_sec5}.

We will often regard the solution $(\psi,\dot\psi)$ of \rfb{pendulum}
as a curve in the phase plane. Recall that in the phase plane the
angle $\psi$ is on the $x$-axis and the angular velocity $\dot\psi$ is
on the $y$-axis. The curve corresponding to $(\psi,\dot\psi)$
satisfies the ODE \vspace{-2mm}
\BEQ{phase_eqn}
  \frac{\dd\dot\psi(t)}{\dd \psi(t)} \m=\m -\alpha + \frac{\beta
  +\gamma(t)-\sin\psi(t)}{\dot\psi(t)} \qquad {\rm whenever} \qquad
  \dot\psi(t)\neq0 \m.\vspace{-2mm}
\end{equation}
Suppose that the curve $(\psi,\dot\psi)$ passes through a point
$(\psi_0,\dot\psi_0)$ in the phase plane. We use the notation $\dot
\psi|_{\psi=\psi_0}$ to denote $\dot\psi_0$ provided there is no
ambiguity. We refer to Figures 2 and 3 for typical state trajectory
curves in the phase plane.

The following two lemmas establish a monotonicity in the behavior of
the solutions to \rfb{pendulum} with respect to the infinity norm of
the forcing term.

\begin{lemma} \label{traj_above}
Consider the pendulum equation \vspace{-1mm}
\BEQ{pend_max}
   \ddot\psi_p(t) +\alpha\dot\psi_p(t) +\sin\psi_p(t) \m=\m \beta+d
   \FORALL t\geq0 \m.
\end{equation}
Let \m $\psi$ and $\psi_p$ be the solutions of \rfb{pendulum} and
\rfb{pend_max}, respectively, for the initial conditions $\psi(0)=
\psi_p(0)=\psi_0$ and $\dot\psi(0)=\dot\psi_p(0)=\dot\psi_0$ and
recall that $\|\gamma\|_{L^\infty}<d$. Suppose that $\dot\psi(t)\geq
0$ for all $t\in[0,\tau]$ and $\psi(\tau)\neq\psi(0)$ for some $\tau>0
$. Then the curve $(\psi _p,\dot\psi_p)$ lies above the curve $(\psi,
\dot\psi)$ in the phase plane on the angle interval $(\psi(0),\psi
(\tau))$, i.e. for each $\varphi\in(\psi(0),\psi(\tau))$ we have $\dot
{\psi_p}|_{{\psi_p}= \varphi}>\dot\psi|_{\psi=\varphi}$.
\end{lemma}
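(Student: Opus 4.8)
The plan is to compare the two solutions in the phase plane, but parametrized by the \emph{angle} rather than by time, so that the curves are always compared at equal values of $\psi$. Since $\dot\psi(t)\geq0$ on $[0,\tau]$ and $\psi(\tau)\neq\psi(0)$, the function $t\mapsto\psi(t)$ is nondecreasing and maps $[0,\tau]$ continuously onto $[\psi(0),\psi(\tau)]$. For $\varphi\in[\psi(0),\psi(\tau)]$ set $g(\varphi)=\frac12\big(\dot\psi|_{\psi=\varphi}\big)^2$ (well defined, since on any time interval where $\psi$ is constant $\dot\psi$ vanishes), and let $t(\varphi)$ be a time at which $\psi$ reaches the angle $\varphi$. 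Multiplying \rfb{phase_eqn} by $\dot\psi$ gives
\[
   \frac{\dd g}{\dd\varphi} \m=\m \beta+\gamma\big(t(\varphi)\big)-\sin\varphi-\alpha\sqrt{2g(\varphi)}\m,
\]
and the same computation applied to \rfb{pend_max} yields $\dd g_p/\dd\varphi=\beta+d-\sin\varphi-\alpha\sqrt{2g_p(\varphi)}$ for the analogous function $g_p$ of the $\psi_p$-curve, with $g(\psi(0))=g_p(\psi(0))=\frac12\dot\psi_0^2$.

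The crucial point is that subtracting these two identities cancels the terms $-\sin\varphi$ exactly --- this is why the non-monotonicity of $\sin$ causes no difficulty, since the comparison is made at a common angle. Writing $h=g_p-g$ and $k(\varphi)=2\alpha/\big(\sqrt{2g_p(\varphi)}+\sqrt{2g(\varphi)}\big)\geq0$, one obtains the linear first-order ODE
\[
   \frac{\dd h}{\dd\varphi} \m=\m \big(d-\gamma(t(\varphi))\big)-k(\varphi)\,h(\varphi)\m,\qquad h(\psi(0))=0\m.
\]
Its explicit solution $h(\varphi)=\int_{\psi(0)}^{\varphi}\exp\!\big(-\int_s^\varphi k(\sigma)\dd\sigma\big)\big(d-\gamma(t(s))\big)\dd s$ has a strictly positive integrand, because $\|\gamma\|_{L^\infty}<d$; hence $h(\varphi)>0$ for every $\varphi\in(\psi(0),\psi(\tau))$. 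Since $g\geq0$, this gives $g_p(\varphi)>g(\varphi)\geq0$, and taking (nonnegative) square roots yields $\dot\psi_p|_{\psi_p=\varphi}>\dot\psi|_{\psi=\varphi}$, which is exactly the claimed ordering of the curves.

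I expect the only real obstacle to be the degenerate situations set aside above. One must justify the angle-parametrization at angles where a trajectory touches the $\psi$-axis ($\dot\psi=0$ at an interior $\varphi$): there $\ddot\psi=\beta+\gamma-\sin\varphi\geq0$, since otherwise $\psi$ would decrease, contradicting $\dot\psi\geq0$; and $g$, $g_p$ cannot vanish simultaneously at an interior angle, because at such an angle $\dd h/\dd\varphi=d-\gamma>0$, which is incompatible with $h\geq0$ just to its left and $h=0$ there, so $k$ stays finite on $(\psi(0),\psi(\tau))$. One must also check that the $\psi_p$-curve actually sweeps the whole interval $(\psi(0),\psi(\tau))$: this follows because $g_p>g\geq0$ keeps $\dot\psi_p$ strictly positive once $\psi_p$ has left $\psi(0)$, and $\psi_p$ does leave $\psi(0)$ in the increasing direction --- if $\dot\psi_0>0$ this is clear, while if $\dot\psi_0=0$ then $\ddot\psi(0)\geq0$ (because $\dot\psi\geq0$ near $0$), so $\ddot\psi_p(0)=\ddot\psi(0)+\big(d-\gamma(0)\big)>0$. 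Turning these bootstrap/continuation steps into a clean argument is the technical heart of the proof; the comparison computation itself is short.
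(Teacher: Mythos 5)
Your proof takes a genuinely different route from the paper's. The paper works directly with the phase-plane slope $\dd\dot\psi/\dd\psi$ from \rfb{phase_eqn} and \rfb{phase_eqn_max}: it first shows (by a separate argument covering the degenerate case $\dot\psi_0=0$) that the ordering $\dot\psi_p>\dot\psi$ holds at angles arbitrarily close to $\psi_0$, and then propagates the ordering forward by a first-failure-point contradiction, using the fact that whenever the two curves meet at a common angle with equal positive velocity, $\dd\dot\psi_p/\dd\psi_p>\dd\dot\psi/\dd\psi$ there. You instead pass to the energy variable $g=\frac12\dot\psi^2$ parametrized by angle, observe that the $\sin\varphi$ terms cancel in the difference $h=g_p-g$, and reduce everything to the linear ODE $h'=(d-\gamma)-kh$ with $h(\psi_0)=0$ and $k\geq0$; the variation-of-constants formula then makes positivity of $h$ (hence of $g_p-g$) immediate from $\|\gamma\|_{L^\infty}<d$. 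This is a cleaner mechanism and makes explicit the reason why $\sin$'s sign changes cause no trouble. The trade-off is exactly what you flag at the end: the paper's contradiction argument confines the delicate work to a single preliminary step (producing the interval $(0,\sigma]$ on which $\dot\psi_p>0$), after which everything happens where both velocities are strictly positive and all the relevant derivatives are classical; your integrating-factor formula instead requires one to justify that $g$, $g_p$ are absolutely continuous functions of $\varphi$, that the singularity of $k$ where both velocities vanish is at worst of order $(\varphi-\varphi_0)^{-1/2}$ and hence integrable (here the positive-acceleration fact $\ddot\psi_p>0$ at such a point, which you use, is the key), and to set up the continuation so that the conclusion $h>0$ and the hypothesis that $\psi_p$ continues to sweep the interval bootstrap each other without circularity. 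Those are the same degeneracies the paper's first claim removes up front; you handle them inline. Both routes are valid, and your computation correctly identifies that the comparison is fundamentally a Gronwall-type statement once one works at fixed angle with the energy variable.
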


\begin{proof}
The curve $(\psi_p,\dot\psi_p)$ satisfies the following ODE in the
variable $\psi_p(t)$: \vspace{-2mm}
\BEQ{phase_eqn_max}
  \frac{\dd\dot\psi_p(t)}{\dd \psi_p(t)} \m=\m -\alpha + \frac{\beta
  +d-\sin\psi_p(t)}{\dot\psi_p(t)} \qquad {\rm whenever} \qquad
  \dot\psi_p(t)\neq0 \m. \vspace{-2mm}
\end{equation}
First we claim that there exists $\sigma\in(0,\tau)$ such that $\psi_p
(\sigma)<\psi(\tau)$ and \vspace{-2mm}
$$\dot\psi_p(t) \m>\m 0 \FORALL t\in(0,\sigma] \m.\vspace{-2mm}$$
If $\dot\psi_0>0$ then this is obvious. If $\dot\psi_0=0$ then, since
$\dot\psi(t)\geq 0$ for all $t\in[0,\tau]$, $\ddot\psi(0)\geq 0$.
Using this and $\|\gamma\|_{L^\infty}<d$, \rfb{pendulum} and
\rfb{pend_max} give that $\ddot\psi_p(0)>0$, which together with
$\dot\psi_p(0)\geq 0$ implies the existence of $\sigma\in(0,\tau)$
with the desired properties.

Our second claim is that for each $\vp_1\in(\psi_0,\psi_p(\sigma))$,
there exists a $\varphi\in(\psi_0,\vp_1)$ such that \vspace{-2mm}
\BEQ{Dan_coming}
   \dot{\psi_p}|_{{\psi_p}=\vp} \m>\m \dot\psi|_{\psi=\vp} \m.
\end{equation}
Indeed, if this claim were false, then \vspace{-2mm}
\BEQ{saturday_eq}
  \dot{\psi_p}|_{{\psi_p}=\vp} \m\leq\m \dot\psi|_{\psi=\vp}
  \FORALL \vp\in(\psi_0,\vp_1) \vspace{-2mm}
\end{equation}
which using \rfb{phase_eqn} and \rfb{phase_eqn_max} gives that \m
$\frac{\dd\dot\psi_p(t)}{\dd\psi_p(t)}\big|_{\psi_p(t)=\varphi}>\frac
{\dd\dot\psi(t)}{\dd\psi(t)}\big|_{\psi(t)=\varphi}$ for all $\varphi
\in(\psi_0,\varphi_1)$. This contradicts \rfb{saturday_eq} since $\dot
\psi_p|_{\psi_p=\psi_0}=\dot\psi|_{\psi=\psi_0}=\dot\psi_0$.

So far we have shown that we can find points $\vp\in(\psi_0,\psi
(\tau))$ arbitrarily close to $\psi_0$ such that \rfb{Dan_coming}
holds. To complete the proof of this lemma, it is sufficient to
establish the following claim: if \rfb{Dan_coming} holds for some
$\vp\in(\psi_0,\psi(\tau))$, then \rfb{Dan_coming} holds for all
$\tilde\vp\in(\vp,\psi(\tau))$ (with $\tilde\vp$ in place of $\vp$).

To prove the above claim, suppose that it is not true for some
$\varphi$. Then define \vspace{-1mm}
\BEQ{contr}
   \varphi_{bad} \m=\m \inf \m \left\{\tilde\varphi\in(\varphi,
   \psi(\tau)) \mid\ \dot\psi_p|_{\psi_p=\tilde\varphi}=\dot\psi|
   _{\psi=\tilde\varphi} \right\} \m. \vspace{-2mm}
\end{equation}
Let $t_{bad}\in(0,\tau)$ be such that $\psi(t_{bad})=\varphi_{bad}$,
so that $\dot\psi_p(t_{bad})=\dot\psi(t_{bad})$. We will first show by
contradiction that $\dot\psi(t_{bad})>0$. To this end, suppose that
$\dot\psi(t_{bad})=0$. This implies that $\dot\psi(t_{bad})$ is
a local minimum for $\dot\psi$ and so $\ddot\psi(t_{bad})=0$. From
\rfb{pendulum} we get that $-\sin\varphi_{bad}+\beta+\gamma(t_{bad})
=0$, hence $-\sin\varphi_{bad}+\beta+d>0$. From \rfb{pend_max} we
get that $\ddot\psi_p(t_{bad})>0$, so that for $t<t_{bad}$ very
close to $t_{bad}$ and satisfying $\psi(t)\in(\vp,\psi(\tau))$, \m
$\dot\psi_p(t)<\dot\psi_p(t_{bad})=0$. But \rfb{contr} gives that
$\dot\psi(t)<\dot\psi_p(t)$ and so $\dot\psi(t)<0$, which contradicts
the assumption $\dot\psi\geq 0$ in the lemma. Thus $\dot\psi_p
(t_{bad})=\dot\psi(t_{bad})>0$. Now \rfb{phase_eqn} and
\rfb{phase_eqn_max} give that for some $\mu>0$ \vspace{-2mm}
\BEQ{diff_ineq}
   \frac{\dd\dot\psi_p(t)}{\dd\psi_p(t)}\bigg|_{\psi_p(t)=\tilde
   \vp} \m>\m \frac{\dd\dot\psi(t)}{\dd\psi(t)}\bigg|_{\psi(t)=
   \tilde\vp} \FORALL \tilde\vp\in(\varphi_{bad}-\mu,\vp_{bad})
\end{equation}
and $\varphi_{bad}-\mu>\varphi$. This is because the above inequality
holds when $\tilde\varphi=\varphi_{bad}$. By assumption $\dot\psi_p|
_{\psi_p=\varphi_{bad}-\mu}>\dot\psi|_{\psi=\varphi_{bad}-\mu}$
which, along with \rfb{diff_ineq}, gives the contradiction
$\dot\psi_p|_{\psi=\varphi_{bad}}>\dot\psi|_{\psi=\varphi_{bad}}$.
This proves the claim above \rfb{contr}.
\end{proof}

\begin{lemma} \label{traj_below}
Consider the pendulum equation
\BEQ{pend_min}
   \ddot\psi_n(t)+\alpha\dot\psi_n(t)+\sin\psi_n(t) \m=\m \beta-d
   \FORALL t\geq0\m.
\end{equation}
Let $\psi$ and $\psi_n$ be the solutions of \rfb{pendulum} and
\rfb{pend_min}, respectively, for the initial conditions $\psi(0)=
\psi_n(0)=\psi_0$ and $\dot\psi(0)=\dot\psi_n(0)=\dot\psi_0$. Suppose
that $\dot\psi(t)\leq 0$ for all $t\in[0,\tau]$ and $\psi(\tau)\neq
\psi(0)$ for some $\tau>0$. Then the curve $(\psi_n,\dot\psi_n)$ lies
below the curve $(\psi,\dot\psi)$ in the phase plane on the angle
interval $(\psi(\tau),\psi(0))$, i.e. for each $\vp\in(\psi(\tau),
\psi(0))$ we have $\dot{\psi_n}|_{{\psi_n}=\vp}<\dot\psi|_{\psi=\vp}$.
\end{lemma}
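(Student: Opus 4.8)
The plan is to obtain Lemma~\ref{traj_below} from Lemma~\ref{traj_above} by using the odd symmetry $\sin(-x)=-\sin x$ of the pendulum nonlinearity, which amounts to a rotation of the phase plane by $\pi$. First I would introduce the reflected data $\tilde\psi=-\psi$, $\tilde\psi_p:=-\psi_n$, $\tilde\beta=-\beta$ and $\tilde\gamma=-\gamma$, and check that the tilded quantities satisfy all the standing assumptions of Section~\ref{sec5}: indeed $\tilde\psi$ solves an equation of the form \rfb{pendulum} with the same $\alpha>0$, with $\tilde\beta$ in place of $\beta$ and forcing $\tilde\gamma$, while $\|\tilde\gamma\|_{L^\infty}=\|\gamma\|_{L^\infty}<d$ and $|\tilde\beta|+d=|\beta|+d<1$, and $\tilde\gamma$ is continuous. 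Crucially, $\tilde\psi_p$ solves $\ddot{\tilde\psi}_p+\alpha\dot{\tilde\psi}_p+\sin\tilde\psi_p=\tilde\beta+d$, i.e.\ it is exactly the \emph{maximum}-forcing pendulum \rfb{pend_max} for the tilded data --- so the minimum-forcing equation \rfb{pend_min} turns into the maximum-forcing equation under the reflection. The initial conditions are preserved: $\tilde\psi(0)=\tilde\psi_p(0)=-\psi_0$ and $\dot{\tilde\psi}(0)=\dot{\tilde\psi}_p(0)=-\dot\psi_0$.

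Next I would verify the hypotheses of Lemma~\ref{traj_above} for the pair $(\tilde\psi,\tilde\psi_p)$. Since $\dot\psi(t)\le0$ on $[0,\tau]$ we have $\dot{\tilde\psi}(t)=-\dot\psi(t)\ge0$ on $[0,\tau]$, and $\tilde\psi(\tau)=-\psi(\tau)\ne-\psi(0)=\tilde\psi(0)$ because $\psi(\tau)\ne\psi(0)$. Hence Lemma~\ref{traj_above} applies, and it says that the curve $(\tilde\psi_p,\dot{\tilde\psi}_p)$ lies above the curve $(\tilde\psi,\dot{\tilde\psi})$ on the angle interval $(\tilde\psi(0),\tilde\psi(\tau))=(-\psi(0),-\psi(\tau))$; this interval is nonempty and correctly oriented because $\dot\psi\le0$ forces $\psi(\tau)<\psi(0)$.

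Finally I would translate the conclusion back. Given $\vp\in(\psi(\tau),\psi(0))$, set $\tilde\vp=-\vp\in(-\psi(0),-\psi(\tau))$. The condition $\tilde\psi_p=\tilde\vp$ is the same as $\psi_n=\vp$, and then $\dot{\tilde\psi}_p=-\dot\psi_n$; likewise $\dot{\tilde\psi}=-\dot\psi$ when $\tilde\psi=\tilde\vp$, i.e.\ $\psi=\vp$. Therefore the inequality $\dot{\tilde\psi_p}|_{\tilde\psi_p=\tilde\vp}>\dot{\tilde\psi}|_{\tilde\psi=\tilde\vp}$ provided by Lemma~\ref{traj_above} reads $-\dot\psi_n|_{\psi_n=\vp}>-\dot\psi|_{\psi=\vp}$, i.e.\ $\dot\psi_n|_{\psi_n=\vp}<\dot\psi|_{\psi=\vp}$, which is precisely the assertion of the lemma.

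No new analytic estimates are required; the only point that demands care is the sign bookkeeping for the phase-plane reflection $(\psi,\dot\psi)\mapsto(-\psi,-\dot\psi)$, which is a rotation by $\pi$ about the origin and hence simultaneously reverses the left/right order of the angle interval and turns ``lies above'' into ``lies below''. Equivalently, one could re-run the proof of Lemma~\ref{traj_above} line by line with every inequality reversed and with the roles of \rfb{pend_max}/\rfb{pend_min} and of $\dot\psi\ge0$/$\dot\psi\le0$ interchanged; the reflection argument just makes this mechanical symmetry explicit and spares us the repetition.
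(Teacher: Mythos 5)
Your proposal is correct and follows exactly the paper's strategy: reflect through the origin via $\psi\mapsto-\psi$, $\psi_n\mapsto-\psi_p$, $\beta\mapsto-\beta$, $\gamma\mapsto-\gamma$, thereby turning \rfb{pend_min} into \rfb{pend_max} and the hypothesis $\dot\psi\le0$ into $\dot{\tilde\psi}\ge0$, apply Lemma~\ref{traj_above}, and translate the inequality back. You have merely made explicit the sign bookkeeping that the paper's one-line proof leaves implicit.
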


\begin{proof}
Apply the change of variables $\psi\mapsto-\psi$ and $\psi_n\mapsto-
\psi_p$ to \rfb{pendulum} and \rfb{pend_min}, respectively. Now
apply Lemma \ref{traj_above} to the resulting equations (instead of
\rfb{pendulum} and \rfb{pend_max}) after redefining $\beta$ and
$\gamma$ to be $-\beta$ and $-\gamma$, respectively.
\end{proof}

The following result on the nonexistence of non-constant periodic
solutions to the pendulum equation with a constant forcing
term has been established in \cite{Hay:53}.

\begin{theorem} \label{Hayes}
Consider the pendulum equation \vspace{-1mm}
\BEQ{pend_hayes}
  \ddot\psi_h(t)+\alpha\dot\psi_h(t)+\sin\psi_h(t) \m=\m \sin\l
  \FORALL t\geq0\m, \vspace{-1mm}
\end{equation}
where $\alpha>0$ and $\l\in(0,\pi/2)$. If \m $\alpha>2\sin(\l/2)$ and
$\psi_h$ is a solution of \rfb{pend_hayes} such that $\dot\psi_h$ is
non-negative and periodic, then \m $\psi_h$ is constant.
\end{theorem}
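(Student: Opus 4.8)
Write $v=\dot\psi_h$ and suppose $v\ge0$ is periodic with some period $T>0$. Since $v$ is $T$-periodic, $C:=\int_0^T v(s)\,\dd s=\psi_h(t+T)-\psi_h(t)$ is a nonnegative constant independent of $t$. If $C=0$, then the continuous nonnegative function $v$ has zero integral over a period, so $v\equiv0$ and $\psi_h$ is constant, which is the assertion. Assume henceforth, for contradiction, that $C>0$. I first claim $v>0$ everywhere: if $v(t_0)=0$ for some $t_0$, then $t_0$ is a minimum of the $C^1$ function $v$ (using periodicity we may take $t_0>0$, an interior minimum), so $\dot v(t_0)=\ddot\psi_h(t_0)=0$, and \rfb{pend_hayes} forces $\sin\psi_h(t_0)=\sin\l$; thus $(\psi_h(t_0),\dot\psi_h(t_0))$ is one of the equilibria $(\l,0)$ or $(\pi-\l,0)$ of \rfb{pend_hayes} (modulo $2\pi$), so by uniqueness $\psi_h$ is constant, contradicting $C>0$. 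Hence $v>0$, $\psi_h$ is strictly increasing and maps $[0,\infty)$ onto $[\psi_h(0),\infty)$. From \rfb{pend_hayes}, $\sin\psi_h=\sin\l-\ddot\psi_h-\alpha\dot\psi_h$ is $T$-periodic while $\psi_h(t+T)=\psi_h(t)+C$; since $\psi_h$ is nonconstant, this forces $C=2\pi k$ for an integer $k\ge1$. We take $k=1$ below (this holds because the scalar phase-plane equation that follows has the property that distinct solutions never meet, so a $2\pi k$-periodic positive solution must already be $2\pi$-periodic; alternatively one keeps $k$ throughout, all the $2\pi$'s below getting multiplied by $k$ and cancelling in the end).

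Because $\psi_h$ is strictly increasing, $v$ may be regarded as a function $v(\psi)>0$ of $\psi\in\rline$, $2\pi$-periodic, satisfying the phase-plane equation $v\,\dd v/\dd\psi=\sin\l-\sin\psi-\alpha v$. Integrating over a period and using $\oint v\,\dd v=0$ gives the energy balance
\[
  \alpha\int_0^{2\pi}v(\psi)\,\dd\psi \;=\; 2\pi\sin\l\,, \qquad\text{equivalently}\qquad \alpha\int_0^T\dot\psi_h(t)^2\,\dd t \;=\; 2\pi\sin\l\,,
\]
expressing that over one revolution the work $2\pi\sin\l$ done by the constant torque equals the energy dissipated by the damping. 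Hence $\alpha=2\pi\sin\l\big/\int_0^{2\pi}v\,\dd\psi$, so it is enough to prove the lower bound $\int_0^{2\pi}v(\psi)\,\dd\psi\ge 2\pi\cos(\l/2)$, which yields $\alpha\le\sin\l/\cos(\l/2)=2\sin(\l/2)$ and contradicts the hypothesis $\alpha>2\sin(\l/2)$.

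Proving this lower bound is the heart of the matter. The picture is that a rotating solution must climb past the unstable equilibrium at $\psi=\pi-\l$ on each turn. On the uphill arc $\psi\in(\l,\pi-\l)$ we have $\sin\l-\sin\psi\le0$, hence $\dd v/\dd\psi\le-\alpha$, so $v$ is forced to be fairly large at the bottom $\psi=\l$; on a downhill arc the inequality $\tfrac12(v^2)'\le\sin\l-\sin\psi$ bounds $v^2$ above by its value at $\psi=\pi-\l$ plus the drop of the potential $U(\psi)=-\cos\psi-\psi\sin\l$. The plan is to combine these with a comparison of $v(\cdot)$ against the separatrix of the \emph{conservative} pendulum $\ddot\psi+\sin\psi=\sin\l$ through the saddle $(\pi-\l,0)$, via the comparison principle for the scalar equation $h'=\sin\l-\sin\psi-\alpha\sqrt{2h}$ with $h=\tfrac12v^2$ (its right-hand side being decreasing in $h$, so the comparison principle applies); the half-angle identity $\sin\l-\sin\psi=2\sin\!\big(\tfrac{\psi-\pi+\l}{2}\big)\cos\!\big(\tfrac{\psi-\pi+\l}{2}-\l\big)$ near the saddle is what brings in the constant $\cos(\l/2)$.

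The step I expect to be the main obstacle is obtaining the constant exactly $2\sin(\l/2)$. The crude ingredients above — the energy balance, $\dd v/\dd\psi\le-\alpha$ on the uphill, and the potential-drop bound — by themselves only give a weaker inequality of the shape $\alpha^2<\mathrm{const}/(\pi-2\l)^2$, which is vacuous for small $\l$, where $2\sin(\l/2)\approx\l$. Capturing the correct $O(\l)$ scaling requires tracking the slow passage of the orbit near the saddle $(\pi-\l,0)$, whose linearization has eigenvalues $\mu_\pm=\tfrac12\big(-\alpha\pm\sqrt{\alpha^2+4\cos\l}\big)$; the threshold $\alpha=2\sin(\l/2)$ is exactly the value at which $\sqrt{\alpha^2+4\cos\l}=2\cos(\l/2)$, so that $\mu_+=\cos(\l/2)-\sin(\l/2)$, and it is this borderline configuration of the incoming and outgoing directions at the saddle that the comparison estimate must resolve.
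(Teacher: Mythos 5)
The paper does not prove this theorem; it quotes it from Hayes (1953), \cite{Hay:53}, and uses it as a black box, so you are not competing with an in-paper argument and the question is simply whether your proposal stands on its own.

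Your preparatory steps are correct: ruling out $C=0$; excluding $v(t_0)=0$ via uniqueness at the equilibrium; deducing $C=2\pi k$ from $T$-periodicity of $\sin\psi_h$ together with strict monotonicity of $\psi_h$; reducing to $k=1$ by the non-crossing of phase-plane orbits; and the energy balance $\alpha\int_0^{2\pi}v\,\dd\psi=2\pi\sin\l$. But the inequality you then announce as the target, $\int_0^{2\pi}v\,\dd\psi\ge 2\pi\cos(\l/2)$, is, by that very identity, \emph{algebraically equivalent} to the conclusion $\alpha\le 2\sin(\l/2)$: dividing $2\pi\sin\l$ by the proposed lower bound just reproduces $2\sin(\l/2)$. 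So this is a restatement, not a reduction, and no step of your argument after the energy balance actually establishes it. You say as much yourself: the uphill slope estimate $\dd v/\dd\psi\le-\alpha$ on $(\l,\pi-\l)$ and the downhill potential-drop bound only yield an inequality of the form $\alpha^2<\text{const}/(\pi-2\l)^2$, which is vacuous as $\l\to 0$, and the sharp constant would require resolving the passage near the saddle $(\pi-\l,0)$, which you outline but do not carry out. That unresolved step is the entire content of Hayes's theorem. Your observation that $\alpha=2\sin(\l/2)$ is equivalent to $\sqrt{\alpha^2+4\cos\l}=2\cos(\l/2)$, hence to $\mu_+=\cos(\l/2)-\sin(\l/2)$, is a correct computation and a useful signpost, but it describes what the threshold \emph{looks like} at the linearization, not why a rotating periodic solution forces $\alpha$ to lie at or below it. As written, the proposal is an honest and well-organised setup with a genuine gap at its heart: the quantitative comparison argument at the saddle that would produce the constant $2\sin(\l/2)$ is missing.
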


The locally asymptotically stable equilibrium points of \rfb
{pend_hayes} are located at $\psi_h=\l+2k\pi$ ($k\in\zline$) and $\dot
\psi_h=0$ while the unstable equilibria are at $\psi_h=\pi-\l+2k\pi$
and $\dot\psi_h=0$, regardless of the size of the damping factor
$\alpha>0$. Figure 2 shows the typical shape of the curves $(\psi_h,
\dot\psi_h)$ in the phase plane for a sufficiently large (but not too
large) $\alpha>0$, so that we are in the case considered in Theorem
\ref{Hayes}. In this case every state trajectory converges to an
equilibrium point. If $\alpha$ gets even larger, then of course we are
still in the case considered in Theorem \ref{Hayes}, but the curves do
not spiral around the stable equilibria. This distinction may be
visually remarkable, but is not important for our analysis, so we do
not discuss it further.

%%%%%%%%%%%%%%%%%% *** *** FIGURE 2 *** *** %%%%%%%%%%%%%%%%%% *** %%%
\m \vspace{-10mm}
$$\includegraphics[scale=0.18]{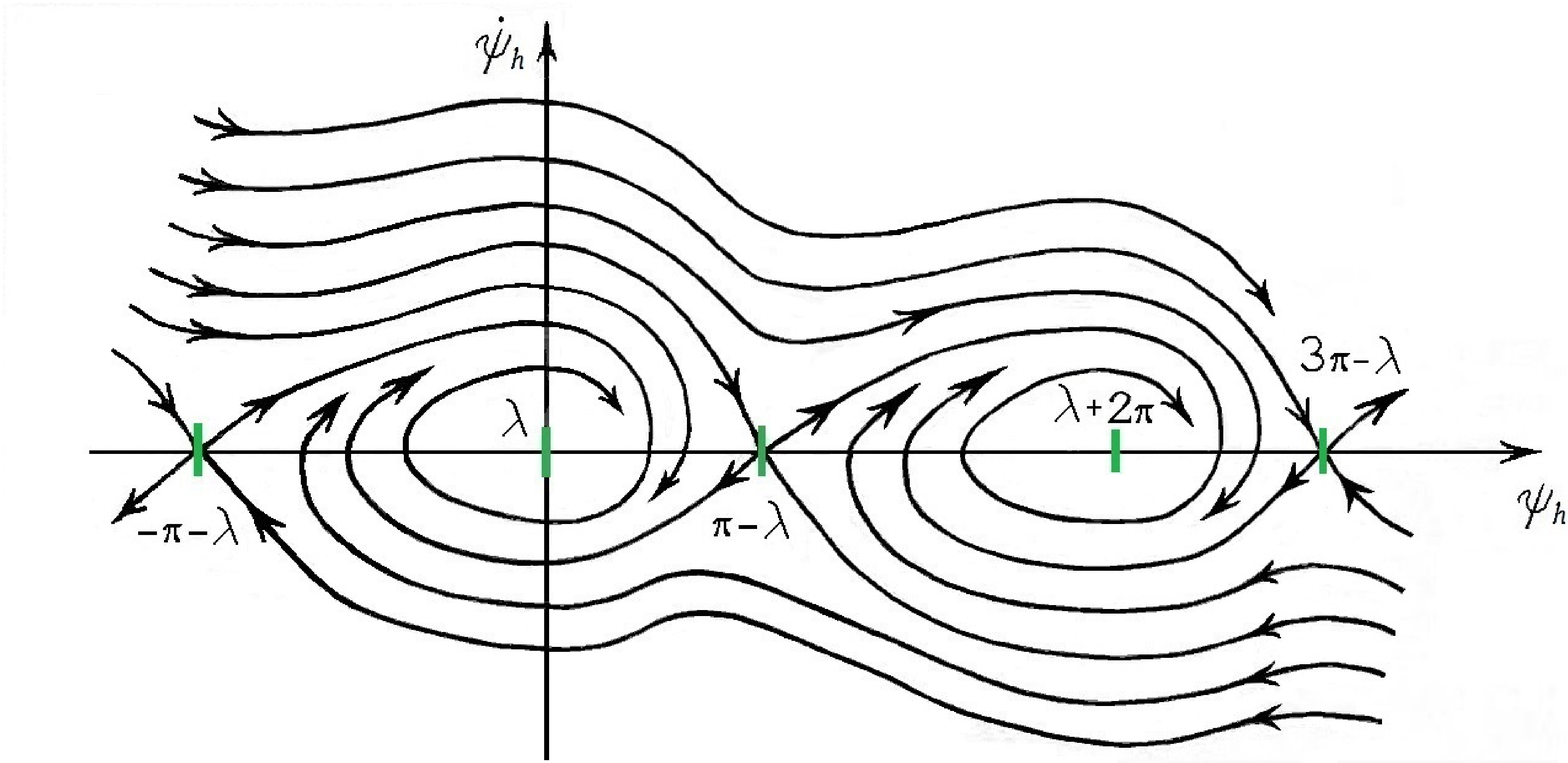}\vspace{-2mm}$$
\vspace{-2mm}\centerline{ \parbox{5.3in}{
Figure 2. Phase plane curves for the damped pendulum from
\rfb{pend_hayes} for moderately large damping factor $\alpha>2\sin
(\l/2)$ (not to scale).}}
\medskip
%%%%%%%%%%%%%%%%%% *** End of Figure 2 *** %%%%%%%%%%%%%%%%%%% *** %%%

When $\alpha>0$ is small, then the curves $(\psi_h,\dot\psi_h)$ look
fundamentally different: some state trajectories still converge to one
of the same equilibrium points. Other state trajectories approach a
curve that is a stable limit cycle if we measure the angle $\psi_h$
modulo $2\pi$, see Figure 3 and the note after the proof of
Proposition \ref{similar_hayes}. The critical value of $\alpha$ that
separates between these two types of behavior (which depends on $\l$)
is estimated in Theorem \ref{Hayes} due to W. Hayes in 1953
\cite{Hay:53}. We are not aware of any better estimate available now
(other than by simulation experiments). Our interest is in the forced
pendulum \rfb{pendulum}, and for us \rfb{pend_hayes} is only a tool
for comparison. Much material about systems related to
\rfb{pend_hayes} can be found in \cite[Ch.~3]{LePoSm:96}.

%%%%%%%%%%%%%%%%%% *** *** FIGURE 3 *** *** %%%%%%%%%%%%%%%%%% *** %%%
\m\vspace{-8mm}
$$\includegraphics[scale=0.18]{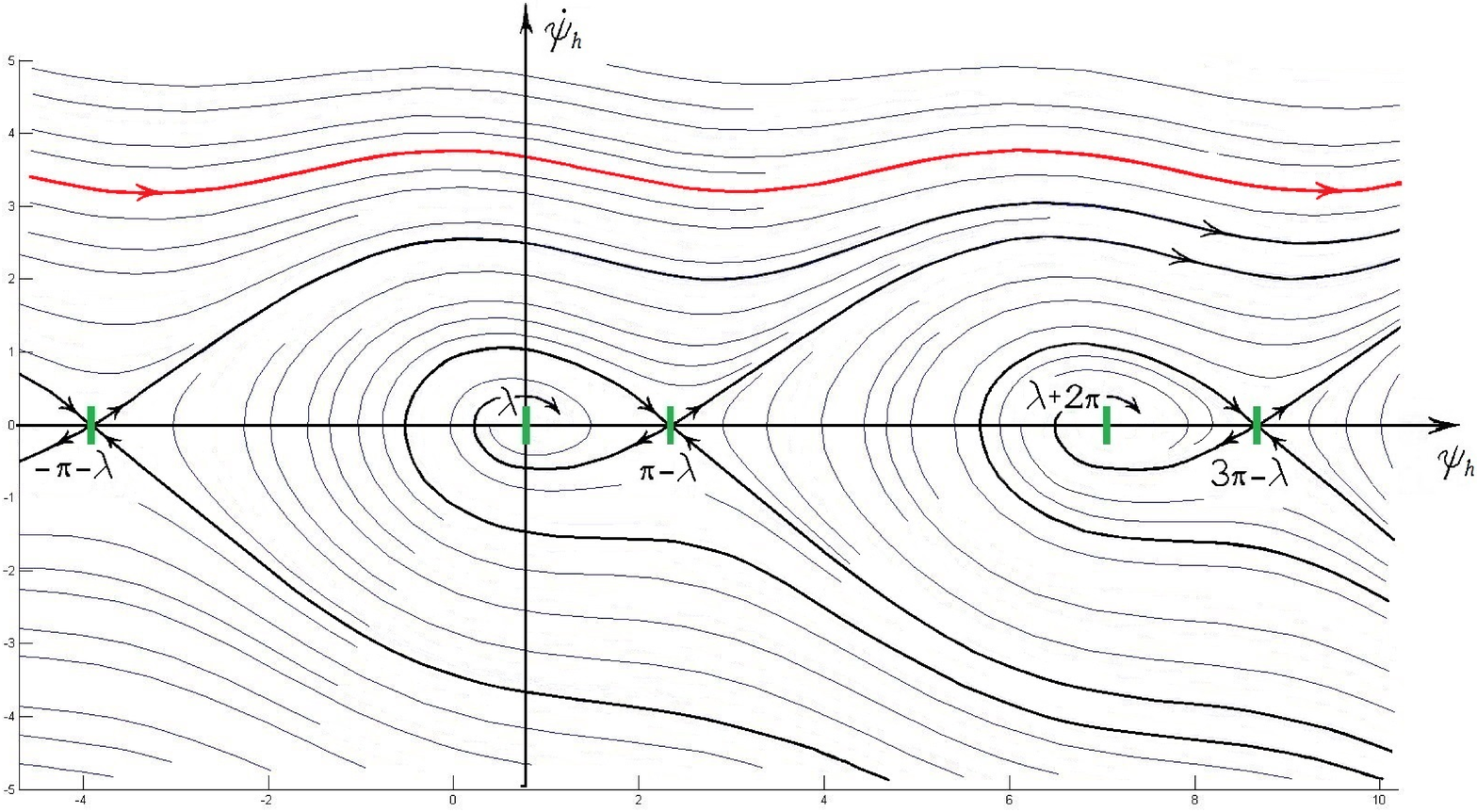}
  \vspace{-3mm}$$ \vspace{-1mm}\centerline{ \parbox{5.3in}{
Figure 3. Phase plane curves for the damped pendulum from
\rfb{pend_hayes} for small $\alpha$ ($\alpha=0.2,\ \sin\l=0.7$). The
limit cycle is shown as a red curve.}}
%%%%%%%%%%%%%%%%%% *** End of Figure 3 *** %%%%%%%%%%%%%%%%%%% *** %%%

\medskip
For the pendulum system \rfb{pendulum}, we define the energy
function $E$ as \vspace{-1mm}
\BEQ{energy_fnc}
  E(t) \m=\m \half\dot\psi(t)^2 + (1-\cos\psi(t)) \FORALL t\geq0 \m.
  \vspace{-1mm}
\end{equation}
The time derivative of $E$ along the trajectories of \rfb{pendulum}
is given by \vspace{-1mm}
\BEQ{energy_der}
  \dot E(t) \m=\m -\alpha\dot\psi(t)^2 + (\beta+\gamma(t))\dot
  \psi(t) \m.
\end{equation}
Therefore for any $t_2>t_1\geq 0$ \vspace{-2mm}
$$ E(t_2)-E(t_1) \m=\m \int_{t_1}^{t_2} [-\alpha\dot\psi(s)+\beta+
   \gamma(s)] \dot\psi(s) \dd s \m. \vspace{-1mm}$$
If $\dot\psi(t)\neq 0$ for all $t\in[t_1,t_2]$, then using the
change of variables $s\mapsto\psi(s)$ we get \vspace{-1mm}
\BEQ{energy_phaseplane}
   E(t_2)-E(t_1) \m=\m \int_{\psi(t_1)}^{\psi(t_2)} \left[ -\alpha
   \dot\psi(\psi^{-1}(\vp))+\beta+\gamma(\psi^{-1}(\vp)) \right]
   \dd\vp \m.\vspace{-2mm}
\end{equation}

Using Theorem \ref{Hayes}, the next proposition shows that if
$\alpha$ is sufficiently large, then each solution $(\psi,\dot
\psi)$ of \rfb{pendulum} must either converge to a limit point
or its velocity must change sign at least once after any
given time $t\geq 0$.

\begin{proposition} \label{similar_hayes}
Recall the angles $\psi_1$ and $\psi_2$ from \rfb{t1-t2}. Assume
that \vspace{-2mm}
\BEQ{assumption}
  \alpha \m>\m 2\sin\frac{|\psi_1|}{2} \m, \qquad \alpha \m>\m 2
  \sin\frac{|\psi_2|}{2} \m.\vspace{-2mm}
\end{equation}
Then there exists no solution $\psi$ of \rfb{pendulum} such that
$\psi$ is unbounded and $\dot\psi(t)$ is either non-negative or
non-positive for all $t\geq 0$.% Similar statements hold for the
%solutions $\psi_p$ of \rfb{pend_max} and for the solutions $\psi_n$
%of \rfb{pend_min}.
\end{proposition}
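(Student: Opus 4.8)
The plan is as follows. Suppose for contradiction that $\psi$ solves \rfb{pendulum}, is unbounded, and has $\dot\psi$ of one sign on $[0,\infty)$. Via the substitution $\psi\mapsto-\psi$, $\beta\mapsto-\beta$, $\gamma\mapsto-\gamma$ (which fixes $\alpha$ and $d$, turns $\dot\psi\leq0$ into $\dot\psi\geq0$, and interchanges the roles of $\psi_1$ and $-\psi_2$) I reduce to the case $\dot\psi(t)\geq0$ for all $t\geq0$; the two inequalities in \rfb{assumption} are precisely what the two cases call for. Then $\psi$ is nondecreasing and unbounded, so $\psi(t)\to+\infty$. Next I compare $\psi$ with the solution $\psi_p$ of the constant-forcing pendulum \rfb{pend_max} (its forcing equals $\beta+d=\sin\psi_1$) having the same initial data at $t=0$: applying Lemma \ref{traj_above} on $[0,\tau]$ with $\psi(\tau)>\psi(0)$ and letting $\tau\to\infty$ shows that the phase-plane curve of $\psi_p$ lies strictly above that of $\psi$ on all of $(\psi(0),\infty)$. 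Hence $\psi_p$ too satisfies $\psi_p(t)\to+\infty$, with $\dot\psi_p(t)>0$ for all $t>0$; moreover $\dot\psi_p$ and $\ddot\psi_p$ are bounded (by the argument preceding \rfb{JF_election}). It now suffices to show that no such $\psi_p$ can exist.

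If $\psi_1\leq0$ this is quick. Consider $V=\half\dot\psi_p^2+(1-\cos\psi_p)-\sin\psi_1\cdot\psi_p$; along \rfb{pend_max} one has $\dot V=-\alpha\dot\psi_p^2\leq0$, so $V$ is nonincreasing. When $\psi_1<0$, $-\sin\psi_1>0$ and hence $V(t)\to+\infty$ as $\psi_p(t)\to+\infty$, a contradiction. When $\psi_1=0$, $V$ is the energy \rfb{energy_fnc}, so $\int_0^\infty\dot\psi_p^2\,dt<\infty$, forcing $\dot\psi_p(t)\to0$; evaluating \rfb{pend_max} at times $s\to\infty$ with $\psi_p(s)\equiv\pi/2\pmod{2\pi}$ gives $\ddot\psi_p(s)\to-1$, so $\dot\psi_p$ would become negative, contradicting $\dot\psi_p>0$.

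The substantive case is $\psi_1>0$, where \rfb{pend_max} is the Hayes equation \rfb{pend_hayes} with $\lambda=\psi_1\in(0,\pi/2)$ and, by \rfb{assumption}, $\alpha>2\sin(\lambda/2)$. Because $\dot\psi_p>0$ while $\psi_p$ increases to $+\infty$, the velocity is a function of the angle: $\dot\psi_p(t)=G(\psi_p(t))$ for a continuous $G\colon(\psi(0),\infty)\to(0,\infty)$ solving the phase-plane equation $G'(\varphi)=-\alpha+(\sin\lambda-\sin\varphi)/G(\varphi)$ (cf.\ \rfb{phase_eqn_max}). The shift $\widetilde G(\varphi)=G(\varphi+2\pi)$ solves the same $2\pi$-periodic equation, so by uniqueness for this scalar ODE either $G\equiv\widetilde G$ on $(\psi(0),\infty)$, or $G-\widetilde G$ has constant sign. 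In the first case $\dot\psi_p$ is periodic along the trajectory, i.e.\ $\psi_p$ is a non-constant periodic solution of \rfb{pend_hayes} in the sense of Theorem \ref{Hayes}, which is impossible. In the second case $k\mapsto G(\varphi+2\pi k)$ is monotone and bounded, hence converges pointwise to a $2\pi$-periodic $G^*\geq0$ that again solves the phase-plane equation where it is positive; if $G^*>0$ throughout, then $\dot\psi=G^*(\psi)$ is a non-constant periodic orbit of \rfb{pend_hayes} with $\dot\psi\geq0$, again contradicting Theorem \ref{Hayes}.

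The remaining possibility — and the step I expect to be the main obstacle — is that $G^*$ vanishes somewhere. A local analysis of the phase-plane equation shows $G^*$ can vanish only at a saddle angle $\pi-\lambda\pmod{2\pi}$, and then $G^*$ is the upper stable-manifold branch of that saddle glued across it to the upper unstable-manifold branch; $2\pi$-periodicity forces these to close up into a rotating heteroclinic orbit joining consecutive saddles on the cylinder (a ``running separatrix''). Ruling this out for $\alpha>2\sin(\lambda/2)$ does not follow from the bare statement of Theorem \ref{Hayes}, since such an orbit is not periodic; it must come either from a sharper reading of Hayes's argument (the damping values that admit a running separatrix lie strictly below those that admit a rotational periodic orbit, hence are bounded by $2\sin(\lambda/2)$ by Theorem \ref{Hayes}), or, equivalently, from a Poincar\'e--Bendixson analysis on the cylinder showing that for $\alpha>2\sin(\lambda/2)$ the forward unstable manifold of each saddle converges to the neighbouring stable equilibrium. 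Once this is established, $G^*>0$ everywhere and the previous paragraph gives the contradiction, completing the proof.
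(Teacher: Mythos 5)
Your overall approach matches the paper's: reduce to $\dot\psi\geq0$ via $\psi\mapsto-\psi$, compare with a constant-forcing solution $\psi_p$ of \rfb{pend_max} using Lemma \ref{traj_above}, extract a $2\pi$-periodic limit $G^*$ from the monotone sequence of period-translates of $\psi_p$'s velocity profile, and invoke Theorem \ref{Hayes}. You have also correctly pinpointed exactly where this pipeline can break: if $G^*$ vanishes at the saddle angle $\pi-\psi_1\pmod{2\pi}$, the corresponding orbit is a running separatrix rather than a rotating periodic orbit, and Theorem \ref{Hayes} as stated does not apply to it. That is a real gap in your proposal, and neither of your suggested remedies (a sharper reading of Hayes's argument, or a Poincar\'e--Bendixson analysis on the cylinder) is what the paper does; either would have to be carried out from scratch.

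The paper closes this gap with a quantitative step that is missing from your proposal (its third claim, inequality \rfb{Pizza_Hut}). Because the hypothesis $\|\gamma\|_{L^\infty}<d$ is strict, one can introduce $\psi_\gamma\in(-\pi/2,\pi/2)$ with $\sin\psi_\gamma=\beta+\|\gamma\|_{L^\infty}<\sin\psi_1$, and then show that on every traversal of $[2k\pi+\psi_1,(2k+1)\pi-\psi_1]$ the unbounded $\psi$ satisfies $\dot\psi\geq\varepsilon:=\alpha(\psi_1-\psi_\gamma)>0$, with $\varepsilon$ independent of $k$ (this comes from \rfb{Simy_still_ill} and integrating $\dd\dot\psi/\dd\psi<-\alpha$ over the strip where $\sin\psi>\beta+\|\gamma\|_{L^\infty}$). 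Since $\dot\psi_p>\dot\psi$ pointwise in angle (Lemma \ref{traj_above}), the same $\varepsilon$-bound carries over to $\psi_p$, hence to each period-translate $f_k$, hence to the monotone limit $f$: one gets $f\geq\varepsilon$ on $[\psi_1,\pi-\psi_1]$. In particular $f(\pi-\psi_1)\geq\varepsilon>0$, so the limit curve stays uniformly away from the saddle and the running-separatrix scenario is ruled out outright. On the remaining arc $(\pi-\psi_1,\psi_1+2\pi)$ positivity is automatic, since whenever the velocity there drops below $(\beta+d-\sin\varphi)/\alpha>0$ the acceleration is positive. The limit is therefore a genuine rotating periodic orbit and Theorem \ref{Hayes} finishes the argument. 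If you insert this uniform lower bound your proof closes and coincides in substance with the paper's. A small additional point: your ``local analysis'' that $G^*$ can vanish only at a saddle also needs a separate word to exclude the stable-equilibrium angle $\psi_1+2k\pi$, where $\beta+d-\sin\varphi$ vanishes as well, so the bounded-away-from-zero acceleration argument does not apply there; the $\varepsilon$-bound handles that point automatically.
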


\begin{proof}
In the first part of this proof we assume that $\psi$ is a solution of
\rfb{pendulum} such that $\dot\psi(t)\geq 0$ \m for all $t\geq 0$. Our
first claim is that if $\psi$ is unbounded, then \vspace{-2mm}
$$\beta+d \m>\m 0 \m,\ \mbox{ hence }\ \psi_1>0 \m.\vspace{-1mm}$$
Indeed, if not, then the right-hand side of \rfb{energy_der} is \m
$\leq-\mu\dot\psi(t)$ for some $\mu>0$, forcing $E$ to become
eventually negative, which is impossible.

Our second claim is that for any $\tau>0$, \vspace{-1mm}
\BEQ{Simy_still_ill}
   \mbox{if \ $\sin\psi(\tau)>\beta+\|\gamma\|_{L^\infty}$, \
   then \ $\ddot\psi(\tau)<0$ \m and \m $\dot\psi(\tau)>0$\m.}
\end{equation}
If $\sin\psi(\tau)>\beta+\|\gamma\|_{L^\infty}$ then clearly $\sin\psi
(\tau)>\beta+\gamma(\tau)$, hence from \rfb{pendulum} we get $\ddot
\psi(\tau)<0$. It follows that $\dot\psi(\tau)>0$ since otherwise (if
it is zero) then for $t>\tau$ close to $\tau$ we would have $\dot\psi
(t)<0$, contradicting our assumption that $\dot\psi\geq 0$.

In the sequel, we assume that $\psi$ is unbounded (which will lead to
a contradiction). Let $t_0>0$ be such that $\psi(t_0)=2m\pi+\psi_1$,
$m\in\zline$. Our third claim is that %\vspace{-2mm}
\BEQ{Pizza_Hut}
  \inf\left\{\dot\psi(t) \m\big|\ \psi(t) \in [2k\pi+\psi_1,(2k+1)\pi
  -\psi_1],\ k\in\zline, k\geq m\right\} \m\geq\m \e \m>\m 0 \m.
\end{equation}
Define $\psi_\gamma\in(-\pi/2,\pi/2)$ so that $\sin\psi_\gamma=\beta+
\|\gamma\|_{L^\infty}$. Let $t_1,t_2>0$ be such that $\psi(t_1)=
(2m+1)\pi-\psi_1$ and $\psi(t_2)=(2m+1)\pi-\psi_\gamma$, so that $t_0
<t_1<t_2$. Then since $\ddot\psi(t)<0$ for all $t\in[t_0,t_1]$ it
follows that for all $t\in[t_0,t_1]$ we have $\dot\psi(t)>\dot\psi
(t_1)>0$. To prove \rfb{Pizza_Hut}, we have to find a lower bound on
$\dot\psi(t_1)$ that is independent of $m$. If we regard $(\psi,\dot
\psi)$ as a curve in the phase plane, then from \rfb{phase_eqn} and
\rfb{Simy_still_ill} we see that for all $t\in[t_0,t_2)$, \m $\frac{
\dd\dot\psi(t)}{\dd \psi(t)}<-\alpha$\m. From here, by integration,
$\dot\psi(t_1)>\dot\psi(t_2)+\alpha(\psi_1-\psi_\gamma)$. Using again
\rfb{Simy_still_ill} we see that we can take $\e=\alpha(\psi_1-\psi_
\gamma)>0$ in \rfb{Pizza_Hut}.

Let $\psi_p$ be the solution of \rfb{pend_max} with $\psi_p(0)=\psi
(t_0)$ and $\dot\psi_p(0)=\dot\psi(t_0)$. Then $\dot\psi_p$ is bounded
(by the argument in \rfb{JF_election}) and $\psi_p$ is defined on $[0,
\infty)$. Our fourth claim is that $\dot\psi_p|_{\psi_p=\vp}>\dot\psi|
_{\psi=\vp}$ for all $\vp>\psi(t_0)$. Indeed, from Lemma \ref{traj_above}
it follows that the curve $(\psi_p,\dot\psi_p)$ lies above the curve
$(\psi,\dot\psi)$ on the angle interval $(\psi(t_0),\infty)$. Hence
$\dot\psi_p$ is a strictly positive function and \rfb{Pizza_Hut} holds
if we replace $\psi$ and $\dot\psi$ with $\psi_p$ and $\dot\psi_p$.

The fifth claim is that there is a solution $\psi_p^f$ of \rfb
{pend_max} such that $\dot\psi_p^f$ is a periodic and strictly positive
function of time. For this, first we find the function $f$ which
represents one period of $\psi_p^f$ in the phase plane. Consider the
sequence of strictly positive continuous functions $(f_k)_{k=m}^\infty$
defined on the angle interval $I=[\psi_1,\psi_1+2\pi]$ as follows: $f_k
(\varphi)=\dot\psi_p|_{\psi_p=\varphi+2k\pi}$ for each $\varphi\in I$,
where $\psi_p$ is as defined in the previous paragraph. Clearly for each
$k\geq m$, the curve defined by the graph of $f_k$ in the phase plane
is a segment of the curve $(\psi_p,\dot\psi_p)$ and so it follows
from \rfb{phase_eqn_max} that \vspace{-1mm}
\BEQ{fk_curve}
  \frac{\dd f_k(\vp)}{\dd \vp} \m=\m -\alpha + \frac{\beta+d-\sin\vp}
  {f_k(\vp)} \FORALL \vp\in[\psi_1,\psi_1+2\pi]\m. \vspace{-1mm}
\end{equation}
Since no two curves corresponding to two distinct solutions of \rfb
{pend_max} can intersect in the phase plane, for $k_1\neq k_2$ the curves
defined by $f_{k_1}$ and $f_{k_2}$ must either be the same or do not
intersect at all. This, along with the fact that $f_k(\psi_1+2\pi)=f_
{k+1}(\psi_1)$ for each $k\geq m$, implies that $f_{k+1}-f_k$ is either
a non-negative function for all $k$ or it is a non-positive function for
all $k$. Therefore the sequence $(f_k)_{k=m}^\infty$ converges to $f$
which is a non-negative continuous function defined on $I$ satisfying
$f(\psi_1)=f(\psi_1+2\pi)$ (here we have used the fact that the functions
$f_k$ are uniformly bounded). By using the version of \rfb{Pizza_Hut}
with $\psi_p$ in place of $\psi$, for each $\varphi\in[\psi_1,\pi-
\psi_1]$ we get that $f_k(\varphi)\geq\e$ for all $k\geq m$ and so
$f(\varphi)\geq\e$. This means that for all $\vp\in[\psi_1,\pi-\psi_1]$
and all $k\geq m$, the right side of \rfb{fk_curve} is bounded in
absolute value by $(\beta+d+1)/\e+\alpha$. Using this, we can take the
limit as $k\to\infty$ on both sides of \rfb{fk_curve} to conclude that
$f$ satisfies \rfb{fk_curve} on the interval $\varphi\in[\psi_1,\pi
-\psi_1]$.

To complete the proof of the above claim, let $\psi_p^f$ be the
solution of \rfb{pend_max} for the initial state $\psi_p^f(0)=\psi_1$,
$\dot\psi_p^f(0)=f(\psi_1)$. Since the curve $(\psi_p^f,\dot\psi_p^f)$
satisfies \rfb{phase_eqn_max} and $f$ satisfies \rfb{fk_curve} (which
is the same ODE as \rfb{phase_eqn_max}), it follows that $f(\varphi)=
\dot\psi_p^f|_{\psi_p^f=\varphi}$ for all $\varphi\in[\psi_1,\pi
-\psi_1]$. In particular \m $\dot\psi_p^f(t)\geq\e$ \m as long as
$\psi_p^f(t)\leq\pi-\psi_1$. We now show that $\dot\psi_p^f(t)>0$ as
long as $\psi_p^f(t)\in(\pi-\psi_1,\psi_1+2\pi)$. Indeed, if $\dot
\psi_p^f(t)<(\beta+d-\sin\varphi)/\alpha$ (which is a positive number)
and $\psi_p^f(t)\in(\pi-\psi_1,\psi_1+2\pi)$, then \rfb{pend_max} gives
that $\ddot\psi_p^f(t)>0$, so that $\dot\psi_p^f$ is increasing and
hence it cannot become $\leq 0$. Therefore \m $\lim_{t\to\infty}
\psi_p^f(t)\geq\psi_1+2\pi$. By the same argument as used earlier
for $\varphi\in[\psi_1,\pi-\psi_1]$, \m $f(\varphi)=\dot\psi_p^f
|_{\psi_p^f=\varphi}$ for all $\varphi\in[\psi_1,\psi_1+2\pi]$.
Therefore $\dot\psi_p^f|_{\psi_p^f=\psi_1}=\dot\psi_p^f|_{\psi_p^f
=\psi_1+2\pi}$, so that $\dot\psi_p^f$ is a periodic and strictly
positive function.

The fifth claim (that we proved) together with the first inequality in
\rfb{assumption} contradict Theorem \ref{Hayes}, because $\psi_p^f$ is
a solution of \rfb{pend_hayes} when $\l=\psi_1$ (here we have used the
first claim). Thus if $\dot\psi(t)\geq 0$ for all $t\geq 0$, then
$\psi$ must be bounded.

Next assume that $\psi$ is unbounded and $\dot\psi(t)\leq0$ for all
$t\geq0$. Then $-\psi$ is unbounded and $-\dot\psi(t)\geq0$ and
$-\psi$ is a solution of \rfb{pendulum} when $\beta+\gamma(t)$ on the
right side is replaced with $-\beta-\gamma(t)$. The above proof, using
$-\beta$ in place of $\beta$, and the second inequality in \rfb
{assumption} will again give rise to a contradiction implying that if
$\dot\psi(t)\leq0$ for all $t\geq0$, then $\psi$ must be bounded.
%
%The proof of what we stated in the proposition about $\psi_p$ is
%contained in the above proof (start from the fifth claim). The proof
%for $\psi_n$ is similar.
\end{proof}

Note that the above proof also contains (around the fifth claim) the
main ingredients of the proof of the following fact: If $\beta+d>0$,
then any unbounded solution $\psi_p$ of \rfb{pend_max} converges to a
solution $\psi_p^f$ such that $\dot\psi_p^f$ is positive and periodic
(both as a function of time and as a function of $\psi_p^f$) (shown
as the red curve in Figure 3). A similar statement holds for $\beta+d
<0$, in which case $\dot\psi_p^f$ is negative and periodic.

\begin{definition} \label{posneg}
A point $(\vp,0)$ in the phase plane is called a {\em positive
acceleration point} if $\beta+d-\sin\vp>0$, i.e. $\ddot\psi_p(0)>0$
according to \rfb{pend_max} when $\psi_p(0)=\vp$ and $\dot\psi_p(0)
=0$. A point $(\vp,0)$ in the phase plane is called a {\em negative
acceleration point} if $\beta-d-\sin\vp<0$, which has a similar
interpretation as $\ddot\psi_n(0)<0$ using \rfb{pend_min}.
\end{definition}

Using the notation \rfb{t1-t2}, the set of positive acceleration
points is \vspace{-1mm}
$$ \left\{ (\varphi,0)\m\big|\ \varphi\in((2k-1)\pi-\psi_1,2k\pi+
   \psi_1),\ k\in\zline \right\} \vspace{-1mm} $$
and the set of negative acceleration points is \vspace{-1mm}
$$ \left\{ (\varphi,0)\m\big|\ \varphi\in(2k\pi+\psi_2,(2k+1)\pi-
   \psi_2),\ k\in\zline \right\} \m.$$

\begin{lemma} \label{pos_acc}
Suppose that \m $\alpha>2\sin(|\psi_1|/2)$ and $\psi_p$ is the
solution of \rfb{pend_max} when \vspace{-1mm}
$$ (\psi_p(0),\dot\psi_p(0)) \m=\m (\vp^0,0) \m,\qquad (2m-1)\pi-
   \psi_1 \m<\m \vp^0 \m<\m 2m\pi+\psi_1 \m,\ \ m\in\zline
   \vspace{-1mm} $$
(so that $(\varphi^0,0)$ is a positive acceleration point). Denote
\vspace{-2mm}
$$ \tau \m=\m \sup\m\{T>0\m\big|\ \dot\psi_p(t)>0 \textrm{ for all }
   t\in(0,T)\} \m.\vspace{-2mm}$$

If \m\m $\tau<\infty$ then \m $(\psi_p(\tau),\dot\psi_p(\tau))=(\vp^1,
0)$, where \ $\varphi^1\in(2m\pi+\psi_1,(2m+1)\pi-|\psi_1|)$.

\vspace{-1mm}
If \m\m $\tau=\infty$ then \ $\lim_{t\to\infty}(\psi_p(t),\dot
\psi_p(t))=(\varphi^1,0)$, where $\varphi^1=2m\pi+\psi_1$.
\end{lemma}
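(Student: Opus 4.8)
Here is how I would approach the proof.

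The plan is to combine the sign of $\dot\psi_p$ near the initial rest point with an energy estimate and Theorem~\ref{Hayes}. Since $(\varphi^0,0)$ is a positive acceleration point, \rfb{pend_max} gives $\ddot\psi_p(0)=\beta+d-\sin\varphi^0>0$, so $\dot\psi_p(t)>0$ for small $t>0$ and hence $\tau>0$; on $(0,\tau)$ the function $\psi_p$ is strictly increasing, so $\psi_p(t)>\varphi^0$ for $t\in(0,\tau]$. Assume first $\tau<\infty$. Since $\dot\psi_p>0$ on $(0,\tau)$ and $\dot\psi_p$ is continuous, $\dot\psi_p(\tau)=\lim_{t\to\tau^-}\dot\psi_p(t)\ge0$; it cannot be positive (else $\tau$ could be enlarged), so $\dot\psi_p(\tau)=0$. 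Put $\varphi^1=\psi_p(\tau)>\varphi^0$. If $\varphi^1\in(\varphi^0,2m\pi+\psi_1)$, then $(\varphi^1,0)$ is strictly inside the positive acceleration region, so $\ddot\psi_p(\tau)>0$ while $\dot\psi_p(\tau)=0$, forcing $\dot\psi_p<0$ just before $\tau$ --- impossible. If $\varphi^1=2m\pi+\psi_1$, then $(\varphi^1,0)$ is an equilibrium of \rfb{pend_max} and backward uniqueness gives $\psi_p\equiv2m\pi+\psi_1$, contradicting $\psi_p(0)=\varphi^0$. Hence $\varphi^1>2m\pi+\psi_1$.

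For the upper bound I would use the energy $\widetilde E_p(t)=\tfrac12\dot\psi_p(t)^2+V(\psi_p(t))$ with $V(\psi)=(1-\cos\psi)-(\beta+d)\psi$, so that $\dot{\widetilde E}_p=-\alpha\dot\psi_p^2$, strictly negative where $\dot\psi_p\neq0$; thus $V(\varphi^1)=\widetilde E_p(\tau)<\widetilde E_p(0)=V(\varphi^0)$. Now $V'=\sin\psi-(\beta+d)$ is negative on $((2m-1)\pi-\psi_1,2m\pi+\psi_1)$ and positive on $(2m\pi+\psi_1,(2m+1)\pi-\psi_1)$, so $V$ is strictly decreasing on the first interval (hence $V(\varphi^0)<V((2m-1)\pi-\psi_1)$) and strictly increasing on the second. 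When $\beta+d\le0$ one has $V((2m+1)\pi-\psi_1)=V((2m-1)\pi-\psi_1)-2\pi(\beta+d)\ge V((2m-1)\pi-\psi_1)>\widetilde E_p(0)$, so $\psi_p$ can never reach $(2m+1)\pi-\psi_1$; therefore $\varphi^1$ lies in the increasing branch $(2m\pi+\psi_1,(2m+1)\pi-\psi_1)$, and since a direct computation gives $V((2m-1)\pi-\psi_1)\le V((2m+1)\pi-|\psi_1|)$, the chain $V(\varphi^1)<V(\varphi^0)<V((2m+1)\pi-|\psi_1|)$ combined with monotonicity of $V$ on that branch yields $\varphi^1<(2m+1)\pi-|\psi_1|$. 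When $\beta+d>0$ --- so that $(2m+1)\pi-|\psi_1|=(2m+1)\pi-\psi_1$ is exactly the adjacent saddle --- the same conclusion follows once we know $\psi_p$ cannot reach $(2m+1)\pi-\psi_1$; for that I would argue as in the fifth claim of the proof of Proposition~\ref{similar_hayes}: if it could, one extracts from $\psi_p$ a solution of \rfb{pend_max} with $\dot\psi_p$ positive and periodic, contradicting $\alpha>2\sin(\psi_1/2)$ by Theorem~\ref{Hayes}. Then $\varphi^1$ again lies in the increasing branch and, $\varphi^1=(2m+1)\pi-\psi_1$ being excluded as an equilibrium, $\varphi^1<(2m+1)\pi-\psi_1$.

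Now assume $\tau=\infty$. Then $\dot\psi_p\ge0$ for all $t\ge0$; since $\alpha>2\sin(|\psi_1|/2)$ and $|\beta+d|=|\sin\psi_1|<1$, Proposition~\ref{similar_hayes} applied to \rfb{pend_max} shows $\psi_p$ is bounded. Being monotone and bounded, $\psi_p(t)\to L$, and since $\dot\psi_p$ and $\ddot\psi_p$ are bounded (estimate of the type \rfb{JF_election}), $\dot\psi_p(t)\to0$; then \rfb{pend_max} forces $\sin L=\beta+d$, i.e.\ $L$ is an equilibrium angle. As $L\ge\varphi^0>(2m-1)\pi-\psi_1$, either $L=2m\pi+\psi_1$ or $L\ge(2m+1)\pi-\psi_1$. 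In the latter case $\psi_p$ would cross $2m\pi+\psi_1$ and then, there being no equilibrium angle in $(2m\pi+\psi_1,(2m+1)\pi-\psi_1)$, either reach or converge from below to $(2m+1)\pi-\psi_1$; both are ruled out exactly as in the previous paragraph (by the energy bound when $\beta+d\le0$, by Theorem~\ref{Hayes} when $\beta+d>0$). Hence $L=2m\pi+\psi_1$. The delicate point throughout is the case $\psi_1>0$: a purely energetic argument does not prevent $\psi_p$ from climbing over the barrier at $(2m+1)\pi-\psi_1$ (the next barrier is lower), so one must exploit the damping condition $\alpha>2\sin(|\psi_1|/2)$ --- equivalently, the absence of a running periodic solution of \rfb{pend_max} --- to confine the trajectory, which is where Theorem~\ref{Hayes} enters.
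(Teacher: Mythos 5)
Your opening --- the sign argument giving $\ddot\psi_p(0)>0$ and $\tau>0$, the elimination of $\varphi^1\le 2m\pi+\psi_1$ by positive acceleration and no-finite-time-equilibration, and the energy function $\widetilde E_p(t)=\tfrac12\dot\psi_p(t)^2+(1-\cos\psi_p(t))-(\beta+d)\psi_p(t)$ with $\dot{\widetilde E}_p=-\alpha\dot\psi_p^2$ that settles the case $\beta+d\le 0$ --- is correct, and the energy route is a genuinely different and arguably slicker way to get the upper bound than the paper's, which never splits by the sign of $\psi_1$.

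The case $\beta+d>0$ (i.e.\ $\psi_1>0$), which you correctly flag as the delicate one, is where the proof has a real gap. You want to rule out $\psi_p$ reaching $(2m+1)\pi-\psi_1$ by ``extracting from $\psi_p$ a positive periodic solution as in the fifth claim of Proposition~\ref{similar_hayes}'' and invoking Theorem~\ref{Hayes}; but that extraction needs $\psi_p$ to be unbounded \emph{and} $\dot\psi_p$ to be bounded below by a fixed $\varepsilon>0$ on the windows $[2k\pi+\psi_1,(2k+1)\pi-\psi_1]$, and crossing $(2m+1)\pi-\psi_1$ with $\dot\psi_p>0$ gives neither: the trajectory may simply run monotonically into the basin of the next stable rest point $(2m+2)\pi+\psi_1$, remaining bounded with $\dot\psi_p\to 0$. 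In your own $\tau=\infty$ branch you have \emph{already concluded} from Proposition~\ref{similar_hayes} that $\psi_p$ is bounded, so no running orbit can be extracted from it and the contradiction never materializes. The uniform lower bound $\varepsilon=\alpha(\psi_1-\psi_\gamma)$ in Proposition~\ref{similar_hayes} comes from the strict margin between $\|\gamma\|_{L^\infty}$ and $d$, and that margin is exactly zero when the machinery is applied to a solution of \rfb{pend_max} itself. The paper restores the margin artificially: it fixes $\lambda\in(|\psi_1|,\pi/2)$ with $\alpha>2\sin(\lambda/2)$ and compares $\psi_p$ with the $\sin\lambda$-forced pendulum $\psi_h$ of \rfb{pend_hayes} started from the same state. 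By Lemma~\ref{traj_above}, $\psi_h$ lies strictly above $\psi_p$, so if $\psi_p$ reached $(2m+1)\pi-\lambda$ we would have $\dot\psi_h>0$ there; then $\sin\lambda-\sin\varphi>0$ on the whole open interval $((2m+1)\pi-\lambda,(2m+2)\pi+\lambda)$ keeps $\dot\psi_h>0$, and concatenating $2\pi$-shifted copies shows $\psi_h$ is genuinely unbounded with $\dot\psi_h>0$, contradicting Proposition~\ref{similar_hayes} (now with some $\tilde\psi_1>\lambda$). The same strict inequality $\lambda>|\psi_1|$ also produces the uniform bound $\varphi^1\le(2m+1)\pi-\lambda<(2m+1)\pi-|\psi_1|$. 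Your proof needs an analogous auxiliary comparison; applying the fifth-claim extraction directly to $\psi_p$ cannot close.
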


\vspace{-2mm}
Note that in both cases listed above, $(\varphi^1,0)$ is a negative
acceleration point.

\begin{proof}
Choose $\l\in(|\psi_1|,\pi/2)$ such that $\alpha>2\sin(\l/2)$.
Suppose that \m $\lim_{t\to\tau}\psi_p(t)>(2m+1)\pi-\l$ (which will
lead to a contradiction). Let $\psi_h$ be a solution of \rfb
{pend_hayes} with $\psi_h(0)=\varphi^0$ and $\dot\psi_h(0)\geq 0$.
Since $\sin\l>\sin\psi_1$ it can be shown (like in the proof of Lemma
\ref{traj_above}) that the curve $(\psi_h,\dot\psi_h)$ is above the
curve $(\psi_p,\dot\psi_p)$ in the phase plane on the angle interval
$(\varphi^0,(2m+1)\pi-\l)$. Therefore $\dot\psi_h|_{\psi_h=(2m+1)\pi-
\l}>0$. This implies that there exists $\tau_1>0$ such that $\psi_h
(\tau_1)=2(m+1)\pi+\l$ \m and
$$ \dot\psi_h|_{\psi_h=\varphi} \m>\m 0 \FORALL \varphi\in
   ((2m+1)\pi-\l,2(m+1)\pi+\l) \m.$$
This is because if $\dot\psi_h|_{\psi_h=\varphi}<(\sin\l-\sin\varphi)
/\alpha$ (which is $>0$), then \rfb{pend_hayes} gives that $\ddot
\psi_h|_{\psi_h=\varphi}>0$. It now follows from the above discussion
that
\BEQ{switched_off}
   \dot\psi_h|_{\psi_h=\vp} \m>\m 0 \FORALL\vp\in(\vp^0,\vp^0+2\pi]\m.
\end{equation}

Let $\psi_h^1$ be the solution of \rfb{pend_hayes} with \m $(\psi_h^1
(0),\dot\psi_h^1(0))=(\varphi^0,\dot\psi_h|_{\psi_h=\varphi^0+2\pi})$.
Then repeating the above argument we obtain that \rfb{switched_off}
holds with $\psi_h^1$ in place of $\psi_h$. By concatenating $\psi_h$
with $\psi_h^1$, we obtain that the solution $\psi_h$ of
\rfb{pend_hayes} actually advances by at least two full circles from
its initial angle $\varphi^0$, and we have
$$ \dot\psi_h|_{\psi_h=\varphi} \m>\m 0 \FORALL \varphi\in (\varphi^0,
   \varphi^0+4\pi] \m.$$
Continuing by induction, we obtain that $\psi_h$ is unbounded and
$\dot\psi_h>0$ all the time. This contradicts Proposition \ref
{similar_hayes} in which we replace $\psi_1$ with $\tilde\psi_1>\l$
such that $\alpha>2\sin(\tilde\psi_1/2)$ still holds. Indeed, then
$\psi_h$ is a solution of \rfb{pendulum} when $\beta=0$ and $\gamma(t)
=\sin\l$ for all $t\geq0$. Hence the assumption at the start of our
proof is false, which means that \vspace{-1mm}
$$\lim_{t\to\tau} \psi_p(t) \m\leq\m (2m+1)\pi-\l \m.\vspace{-2mm}$$

Consider the case when $\tau<\infty$, so that $\dot\psi_p(\tau)=0$. We
claim that $\varphi^1=\psi_p(\tau)\geq 2m\pi+\psi_1$. Indeed, $\dot
\psi_p(t)$ cannot reach 0 for a time $t$ when \m $\varphi^0<\psi_p(t)<
2m\pi+\psi_1$, because \rfb{pend_max} would imply that $\ddot\psi_p(t)
>0$. Thus, $\vp^1\in[2m\pi+\psi_1,(2m+1)\pi-\l]$. Next we claim that
$\vp^1>2m\pi+\psi_1$. Indeed, if $\vp^1=2m\pi+\psi_1$, then $(\vp^1,0)$
is an equilibrium point of the system \rfb{pend_max} and $\xx=(\psi_p,
\dot\psi_p)-(\vp^1,0)$ satisfies an ODE of the form $\dot\xx=f(\xx)$,
where $f\in C^1$ and $f(0)=0$. It is well known that for such an ODE,
any trajectory starting from $\xx(0)\not=0$ cannot reach the point
$(0,0)$ in a finite time. Thus, we have $\vp^1>2m\pi+\psi_1$. Combining
this with the fact that $\l>|\psi_1|$, we get that \m $\vp^1\in(2m\pi+
\psi_1,(2m+1)\pi-|\psi_1|)$, as stated in the lemma.

Now consider the case when $\tau=\infty$. Since $\psi_p$ is increasing
and bounded, clearly $\dot\psi_p\in L^1[0,\infty)$. Since $\dot\psi_p$
is bounded (by the argument at \rfb{JF_election}), it follows from
\rfb{pend_max} that $\ddot\psi_p$ is also bounded, so that $\dot\psi
_p$ is uniformly continuous. Now applying Barb\u alat's lemma (see
\cite[Lemma 8.2]{Kha:02} or see \cite{FaWe:15,LoRy:04} for nice
presentations with a more general perspective), we get that $\lim_{t\to
\infty}\dot\psi_p(t)=0$. We have $\varphi^1=\lim_{t\to\infty}\psi_p(t)
\in[2m\pi+\psi_1,(2m+1)\pi-|\psi_1|)$, for similar reasons as in the
case $\tau<\infty$. By differentiating \rfb{pend_max}, we see that
$\dddot\psi_p$ is also bounded. Since the expressions $\int_0^t\ddot
\psi_p(\sigma)\dd\sigma=\dot\psi_p(t)$ are uniformly bounded (with
respect to $t$), we can apply again Barb\u alat's lemma, this time to
$\ddot\psi_p$, to show that $\lim_{t\to\infty}\ddot\psi_p(t)=0$.
Looking at \rfb{pend_max}, it follows that $\lim_{t\to\infty}\sin\psi
_p(t)=\sin\psi_1$, so that $\sin\vp^1=\sin\psi_1$. Looking at the
range of possible values of $\vp^1$, we conclude that it has indeed
the value stated in the lemma.
\end{proof}

With the notation of the last lemma, we call $(\varphi^1,0)$ the {\em
first negative acceleration point} for $\psi_p$. We remark that $\tau=
\infty$ for sufficiently large $\alpha$, regardless of $\varphi^0$.

The following lemma concerning solutions of \rfb{pend_min} is similar
to Lemma \ref{pos_acc}.

\begin{lemma} \label{neg_acc}
Suppose that \m $\alpha>2\sin(|\psi_2|/2)$ and $\psi_n$ is the
solution of \rfb{pend_min} when \vspace{-1mm}
$$ (\psi_n(0),\dot\psi_n(0)) \m=\m (\vp^1,0) \m,\qquad 2m\pi+\psi_2
   \m<\m \vp^1 \m<\m (2m+1)\pi-\psi_2 \m,\ \ m\in\zline
   \vspace{-1mm} $$
(so that $(\varphi^1,0)$ is a negative acceleration point). Denote
\vspace{-2mm}
$$ \tau \m=\m \sup\m\{T>0\m\big|\ \dot\psi_n(t)<0 \textrm{ for all }
   t\in(0,T)\} \m.\vspace{-2mm}$$

\vspace{-1mm}
If \m\m $\tau<\infty$ then \m $(\psi_n(\tau),\dot\psi_n(\tau))=(\vp
^2,0)$, where \ $\vp^2\in((2m-1)\pi+|\psi_2|,2m\pi+\psi_2)$.

\vspace{-1mm}
If \m\m $\tau=\infty$ then \ $\lim_{t\to\infty}(\psi_n(t),\dot
\psi_n(t))=(\varphi^2,0)$, where $\varphi^2=2m\pi+\psi_2$.
\end{lemma}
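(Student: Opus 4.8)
The plan is to reduce Lemma~\ref{neg_acc} to Lemma~\ref{pos_acc} by the reflection $\psi_n\mapsto-\psi_n$, in exactly the way Lemma~\ref{traj_below} was reduced to Lemma~\ref{traj_above}. First I would set $\chi=-\psi_n$. Since $\sin(-\chi)=-\sin\chi$, equation \rfb{pend_min} turns into $\ddot\chi+\alpha\dot\chi+\sin\chi=-(\beta-d)=(-\beta)+d$, which is \rfb{pend_max} with $\beta$ replaced by $\tilde\beta:=-\beta$ and $d$ unchanged. The angles attached to the reflected problem via \rfb{t1-t2} are therefore $\tilde\psi_1$ with $\sin\tilde\psi_1=-\beta+d=-\sin\psi_2$, i.e. $\tilde\psi_1=-\psi_2$, and similarly $\tilde\psi_2=-\psi_1$; in particular $|\tilde\psi_1|=|\psi_2|$, so the hypothesis $\alpha>2\sin(|\psi_2|/2)$ is precisely the hypothesis $\alpha>2\sin(|\tilde\psi_1|/2)$ needed to apply Lemma~\ref{pos_acc} to $\chi$.

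Next I would transport the data. The initial condition $(\psi_n(0),\dot\psi_n(0))=(\varphi^1,0)$ becomes $(\chi(0),\dot\chi(0))=(-\varphi^1,0)$, and negating the constraint $2m\pi+\psi_2<\varphi^1<(2m+1)\pi-\psi_2$ (using $\psi_2=-\tilde\psi_1$) gives $(2m'-1)\pi-\tilde\psi_1<\chi(0)<2m'\pi+\tilde\psi_1$ with $m':=-m$. This says exactly that $(\chi(0),0)$ is a positive acceleration point of the reflected equation as required by Lemma~\ref{pos_acc}. Moreover $\tau=\sup\{T>0\mid\dot\psi_n(t)<0\text{ on }(0,T)\}$ equals $\sup\{T>0\mid\dot\chi(t)>0\text{ on }(0,T)\}$, so the exit time in Lemma~\ref{pos_acc} applied to $\chi$ is the same number $\tau$.

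It then remains only to read off the two conclusions of Lemma~\ref{pos_acc} and reflect back. If $\tau<\infty$, Lemma~\ref{pos_acc} yields $(\chi(\tau),\dot\chi(\tau))=(\vartheta,0)$ with $\vartheta\in(2m'\pi+\tilde\psi_1,(2m'+1)\pi-|\tilde\psi_1|)$; setting $\varphi^2:=-\vartheta$ and substituting $m'=-m$, $\tilde\psi_1=-\psi_2$, $|\tilde\psi_1|=|\psi_2|$ gives $(\psi_n(\tau),\dot\psi_n(\tau))=(\varphi^2,0)$ with $\varphi^2\in((2m-1)\pi+|\psi_2|,2m\pi+\psi_2)$. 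If $\tau=\infty$, Lemma~\ref{pos_acc} gives $\lim_{t\to\infty}(\chi(t),\dot\chi(t))=(2m'\pi+\tilde\psi_1,0)$, and reflecting back gives $\lim_{t\to\infty}(\psi_n(t),\dot\psi_n(t))=(2m\pi+\psi_2,0)$, as asserted.

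The only real work here is bookkeeping: tracking how the reflection interchanges $\psi_1$ with $-\psi_2$ and sends the integer label $m$ to $-m$ while reversing the orientation of the interval endpoints. I do not expect any genuine obstacle beyond getting these index shifts and endpoint inequalities straight, since all the substantive content --- the dichotomy between a finite return of the velocity to zero and convergence to an equilibrium, and the comparison with \rfb{pend_hayes} via Proposition~\ref{similar_hayes} --- is already contained in Lemma~\ref{pos_acc}.
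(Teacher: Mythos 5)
Your proposal is correct and is essentially identical to the paper's own proof: both reduce Lemma~\ref{neg_acc} to Lemma~\ref{pos_acc} via the reflection $\psi_n\mapsto-\psi_n$, with $\tilde\beta=-\beta$, $\tilde\psi_1=-\psi_2$, the index $m\mapsto-m$, and initial angle $-\varphi^1$, then read off the conclusions and reflect back. The bookkeeping you carry out matches the paper's tilde-variable substitution exactly.
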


Note that in both cases listed above, $(\varphi^2,0)$ is a positive
acceleration point. \vspace{-7mm}

\begin{proof} \vspace{3mm}
Define $\tilde\psi_p=-\psi_n$ and $\tilde\beta=-\beta$, then $\tilde
\psi_p$ satisfies \rfb{pend_max} with $\tilde\beta$ in place of $\beta$.
Define $\tilde\psi_1=-\psi_2$, then the first expression in \rfb
{t1-t2} holds with $\tilde\psi_1$ and $\tilde\beta$ in place of
$\psi_1$ and $\beta$, and of course $\alpha>2\sin(|\tilde\psi_1|/2)$.
Define $\tilde\vp^0=-\vp^1$ and $\tilde m=-m$, then these
satisfy the assumption on initial conditions in Lemma \ref{pos_acc}.
Thus, we can apply Lemma \ref{pos_acc} with the tilde variables in
place of the original ones, and we get exactly the conclusions of the
lemma that we are now proving, with $\vp^2= -\tilde\vp^1$.
\end{proof} \vspace{-2mm}

With the notation of the last lemma, we call $(\varphi^2,0)$ the {\em
first positive acceleration point} for $\psi_n$. We remark that $\tau=
\infty$ for sufficiently large $\alpha$, regardless of $\varphi^1$.

Next we define a family of continuous curves in the phase plane
referred to as {\em spiral curves}. These curves have the structure
of an inward spiral.

\begin{definition} \label{traj_env}
Suppose that $\alpha$ satisfies \rfb{assumption}. Let $\varphi^0=(2m
-1)\pi-\psi_2$ for some integer $m$. Construct a sequence $(\varphi^k)
_{k=0}^\infty$ as follows: for each odd $k$, $(\varphi^k,0)$ is the
first negative acceleration point for the solution $\psi_p^k$ of \rfb
{pend_max} with initial conditions $\psi_p^k(0)=\varphi^{k-1}$, $\dot
\psi_p^k(0)=0$. For each even $k>0$, $(\varphi^k,0)$ is the first
positive acceleration point for the solution $\psi_n^k$ of \rfb
{pend_min} with initial conditions $\psi_n^k(0)=\varphi^{k-1}$, $\dot
\psi_n^k(0)=0$. For $k\in\nline$, denote the segment of the curve
$(\psi_p^k,\dot\psi_p^k)$ (or $(\psi_n^k,\dot\psi_n^k)$) between
$(\vp^{k-1},0)$ and $(\vp^k,0)$ by $\Gamma_k$. A {\em spiral curve}
$\Gamma$ starting from $(\varphi^0,0)$ is a continuous curve in the
phase plane obtained by concatenating all $\Gamma_k$ ($k\in\nline$).
\end{definition} \vspace{-2mm}

Lemmas \ref{pos_acc} and \ref{neg_acc} ensure that the points
$\varphi^k$ introduced in Definition \ref{traj_env} in fact exist for
all $k\in\nline$. The spiral curve can be interpreted as the phase
plane trajectory of the solution of \rfb{pendulum} with $\psi(0)=
\vp^0$, $\dot\psi(0)=0$ and \vspace{-2mm}
\BEQ{smoke_alarm}
   \gamma(t) \m=\m d\;{\rm sign}(\dot\psi(t)) \m, \vspace{-2mm}
\end{equation}
where the trajectory is continued even if it happens that a segment
$\Gamma_k$ takes an infinite amount of time. The above expression for
$\gamma(t)$ is like a static friction torque acting on a pendulum,
but with the wrong sign. We remark (but will not use) that for any
sufficiently large damping coefficient $\alpha$ the sequence
$(\varphi^k)_{k=1}^\infty$ is such that \vspace{-2mm}
\BEQ{Warwick_sunshine}
   \vp^1=\varphi^3=\vp^5=\ldots =2m\pi+\psi_1\ \mbox{ and }\
   \vp^2=\varphi^4=\vp^6=\ldots=2m\pi+\psi_2 \m.\vspace{-2mm}
\end{equation}
For smaller $\alpha$ only a part of the equalities in
\rfb{Warwick_sunshine} hold, possibly none. Figure 4 shows possible
shapes of $\Gamma_1$, $\Gamma_2$ and some limit curves $\Gamma_a$,
$\Gamma_b$ that will be introduced later, in the case when none of
the equalities in \rfb{Warwick_sunshine} holds.

\begin{lemma} \label{lm:env_traj}
Suppose that $\alpha$ satisfies \rfb{assumption}. Fix an integer $m$
and consider the spiral curve $\Gamma$ starting from $(\varphi^0,0)$
with $\varphi^0=(2m-1)\pi-\psi_2$. There exists a simple closed curve
$\Gamma_c$ in the phase plane to which $\Gamma$ converges, i.e. for
any $\e>0$ there exists an $N_\e\in\nline$ such that for every
$n\in\nline$ with $n\geq N_\e$, \vspace{-2mm}
\BEQ{Sainsbury}
   \textrm{\bf d}(\xx\m,\m\Gamma_c) \m<\m \e \FORALL \xx\in\Gamma_n
   \m.\vspace{-2mm}
\end{equation}
Here {\bf d} is the Euclidean distance in $\rline^2$. \vspace{-6mm}
\end{lemma}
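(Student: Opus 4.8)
The spiral curve $\Gamma$ is built by concatenating arcs $\Gamma_k$, where the odd-indexed arcs follow trajectories of the upper pendulum \rfb{pend_max} and the even-indexed arcs follow trajectories of the lower pendulum \rfb{pend_min}, switching direction at the alternating sequence of acceleration points $(\varphi^k,0)$ on the $\psi$-axis. The key structural fact I would first record is that the sequence $(\varphi^k)$ is \emph{monotone in an alternating sense}: because distinct trajectories of \rfb{pend_max} (resp.\ \rfb{pend_min}) cannot cross in the phase plane, the turning points with the same parity are nested. More precisely, I would show that $\varphi^1 > \varphi^3 > \varphi^5 > \cdots$ and $\varphi^2 < \varphi^4 < \varphi^6 < \cdots$ (all modulo the $2\pi$ shift that keeps them in a fixed fundamental range), with $\varphi^{2j+1}$ bounded below by $2m\pi+\psi_1$ and $\varphi^{2j}$ bounded above by $2m\pi+\psi_2$, using Lemmas \ref{pos_acc} and \ref{neg_acc} together with the non-crossing property. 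Hence both subsequences converge, say $\varphi^{2j+1}\to a \geq 2m\pi+\psi_1$ and $\varphi^{2j}\to b \leq 2m\pi+\psi_2$, and one checks $a\geq b$ is consistent with the geometry (the nested arcs are collapsing onto a region between the lines $\psi=b$ and $\psi=a$).

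**Constructing $\Gamma_c$.** Given the limiting turning points $a$ and $b$, I would define $\Gamma_c$ as the concatenation of two arcs: $\Gamma_a$, the arc of the solution of \rfb{pend_max} starting at $(b,0)$ with zero velocity and run until its first return to the $\psi$-axis at $(a,0)$ (or its limit as $\tau\to\infty$), and $\Gamma_b$, the arc of the solution of \rfb{pend_min} starting at $(a,0)$ and run until its first return to the $\psi$-axis at $(b,0)$. That these two arcs close up — i.e.\ that the second really lands at $(b,0)$ and not elsewhere — is exactly a fixed-point statement for the composed ``half-return'' maps, and it follows from passing to the limit in the defining relations of $\varphi^{k+1}$ in terms of $\varphi^k$. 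Continuity of solutions of \rfb{pend_max} and \rfb{pend_min} with respect to initial conditions (on the relevant compact time intervals, using the uniform velocity bounds from \rfb{JF_election} and the lower bound $\varepsilon>0$ on $\dot\psi$ of the type in \rfb{Pizza_Hut}) is what legitimizes this limit. The resulting $\Gamma_c$ is a simple closed curve because $\dot\psi$ has a single sign on each of its two arcs, it stays off the $\psi$-axis except at the two corner points, and non-crossing guarantees it does not self-intersect.

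**Convergence of $\Gamma_n$ to $\Gamma_c$.** For the quantitative statement \rfb{Sainsbury}, I would argue arc by arc. Fix $\e>0$. By continuous dependence on initial data over the relevant bounded time interval (again invoking the uniform velocity bound and the uniform lower bound on $|\dot\psi|$ away from the turning points, which keeps the passage time bounded), the arc $\Gamma_{2j+1}$ — the solution of \rfb{pend_max} started at $(\varphi^{2j},0)$ — stays within $\e$ of $\Gamma_a$ once $\varphi^{2j}$ is within some $\delta$ of $b$, and similarly $\Gamma_{2j}$ stays within $\e$ of $\Gamma_b$ once $\varphi^{2j-1}$ is within $\delta$ of $a$. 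Since $\varphi^{2j}\to b$ and $\varphi^{2j+1}\to a$, there is $N_\e$ beyond which both conditions hold, which gives \rfb{Sainsbury}. The one delicate point near a turning point — where the arcs are tangent to the $\psi$-axis and continuous dependence degenerates because the passage ``time'' in the $\psi$ variable is singular — is handled by switching back to time as the independent variable in a small neighborhood of each corner, where the flow of the autonomous systems \rfb{pend_max}, \rfb{pend_min} is Lipschitz and continuous dependence is uniform on compact time intervals; this localizes the difficulty and lets the global estimate go through.

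**Main obstacle.** The crux is establishing that the nested arcs genuinely converge to a \emph{closed} curve rather than, say, spiraling forever with the two limit turning points failing to match up under the half-return maps. This is where the monotone-nesting argument (via the non-crossing of pendulum trajectories in the phase plane) together with the uniform bounds on velocities and passage times does the real work; once $\Gamma_c$ is identified as a fixed point of the composed return map, the Hausdorff-type convergence in \rfb{Sainsbury} is a comparatively routine continuous-dependence argument, modulo the care needed at the turning points described above.
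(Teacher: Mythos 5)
Your overall architecture coincides with the paper's: establish alternating monotonicity of $(\varphi^k)$ via non-crossing, pass to limits $\varphi_{low}$ and $\varphi_{high}$, define $\Gamma_c$ as the concatenation of the pendulum arcs starting from those limits, and deduce convergence by continuous dependence on initial data. However, there is a genuine gap in how you treat the degenerate case, and it is exactly the case the paper singles out for special treatment.

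You argue that continuous dependence on bounded time intervals suffices, leaning on ``the uniform lower bound on $|\dot\psi|$ away from the turning points, which keeps the passage time bounded,'' and you claim the near-corner degeneracy is fixed simply by switching to $t$ as the independent variable. This works only in the non-degenerate case, where the first negative acceleration point $\tilde\varphi_{high}$ lies strictly above $2m\pi+\psi_1$. But Lemma \ref{pos_acc} explicitly allows $\tilde\varphi_{high}=2m\pi+\psi_1$, in which case $(\tilde\varphi_{high},0)$ is an equilibrium of \rfb{pend_max} and the limiting arc $\Gamma_a$ reaches it only as $t\to\infty$. Then the passage times of the approximating arcs $\Gamma_{2n+1}$ are \emph{not} uniformly bounded — they diverge as $n\to\infty$ — so ``continuous dependence on a compact time interval'' no longer delivers the estimate all the way to the endpoint, and switching to the time variable near the corner does not help because the interval of time needed there is itself unbounded. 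You acknowledge the $\tau=\infty$ possibility when defining $\Gamma_c$ but then implicitly exclude it when proving \rfb{Sainsbury}. The paper handles this case (its Case~2) by a separate argument: use continuous dependence only up to a \emph{fixed} finite time $T$ to bring the approximating trajectory within the Lyapunov-stability neighborhood of the equilibrium, and then invoke Lyapunov stability of $(2m\pi+\psi_1,0)$ for \rfb{pend_max} to confine it for all $t\geq T$. Your proposal is missing this ingredient (and the symmetric one at $\varphi_{low}=2m\pi+\psi_2$ for \rfb{pend_min}). Without it, the matching-up of the two limit arcs (your ``fixed point of the composed return map'') and the Hausdorff estimate near the corner are both unproved in the degenerate case.
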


\begin{proof}
Let $\bluff(\vp^k)_{k=0}^\infty$ be the sequence introduced in
Definition \ref{traj_env}. It follows directly from Lemmas
\ref{pos_acc} and \ref{neg_acc} that $\varphi^2>\varphi^0$. Using the
fact that two curves corresponding to two distinct solutions of \rfb
{pend_max} cannot intersect, we conclude that $\vp^3\leq\vp^1$. (We
remark that equality can only occur if $\vp^3=\vp^1=2m\pi+\psi_1$,
since distinct solutions may meet at a common limit point, which is
an equilibrium point of \rfb{pend_max}. In this case we have \rfb
{Warwick_sunshine} except possibly the last equality.) Using the fact
that no two curves corresponding to two distinct solutions of \rfb
{pend_min} can intersect, we conclude that $\varphi^4\geq\varphi^2$.
(We remark that if $\varphi^3<\varphi^1$, then $\varphi^4=\varphi^2$
can only occur if $\varphi^4=\varphi^2=2m\pi+\psi_2$, since distinct
solutions may meet at a common limit point, which is an equilibrium
point of \rfb{pend_min}. In this case we have \rfb{Warwick_sunshine}
except for the first and possibly the last equality from the first
string.) Continuing like this, we get that the sequence $(\varphi
^{2k+1})_{k=0}^\infty$ is nonincreasing and bounded from below by
$2m\pi+\psi_1$, while the sequence $(\varphi^{2k})_{k=0}^\infty$ is
nondecreasing and bounded from above by $2m\pi+\psi_2$. Let
\vspace{-2mm}
\BEQ{two_limit_points}
   \varphi_{low} \m=\m \lim_{k\to\infty} \varphi^{2k} \m,\qquad
   \varphi_{high} \m=\m \lim_{k\to\infty} \varphi^{2k+1} \m.
\end{equation}
Clearly these are positive and negative acceleration points,
respectively.

For the remainder of this proof, for any $\vp\in\rline$ we denote by
$\psi_p(\cdot,\varphi)$ the solution $\psi_p$ of \rfb{pend_max}
satisfying $\psi_p(0)=\varphi$ and $\dot\psi_p(0)=0$. Let
$(\tilde\varphi_{high},0)$ be the first negative acceleration point
for $\psi_p(\cdot,\varphi_{low})$. By Lemma \ref{pos_acc} we have
$\tilde\varphi_{high}\geq 2m\pi+\psi_1$. Since for any $k\in\nline$ we
have $\varphi_{low}\geq\varphi^{2k}$ and no two curves corresponding
to two distinct solutions of \rfb{pend_max} can intersect in the phase
plane, we have $\tilde\varphi_{high}\leq \varphi^{2k+1}$. Taking
limits, we obtain that $\tilde\varphi_{high}\leq\varphi_{high}$. Thus,
using also Lemma \ref{pos_acc}, \vspace{-1mm}
\BEQ{Putin}
   2m\pi+\psi_1 \m\leq\m \tilde\varphi_{high} \m\leq\m
   \varphi_{high} \m<\m (2m+1)\pi-|\psi_1| \m.  \vspace{-1mm}
\end{equation}

Denote the segment of the curve $(\psi_p(\cdot,\varphi_{low}),\dot
\psi_p(\cdot,\varphi_{low}))$ between $(\varphi_{low},0)$ and $(\tilde
\varphi_{high},0)$ by $\Gamma_a$. We claim that for any $\e>0$ there
exists $N_\e\in\nline$ such that \vspace{-1mm}
\BEQ{first_day_of_boys_in_UK_school}
   \mbox{ if }\ \ 2n \m\geq\m N_\e \m,\ \ \mbox{ then }\ \ \textrm
   {\bf d}(\xx\m,\m\Gamma_a) \m<\m \e \FORALL \xx\in\Gamma_{2n+1} \m.
\vspace{-1mm} \end{equation}
Note that this implies (by an easy argument that we omit) that $\tilde
\varphi_{high}=\varphi_{high}$.

To prove \rfb{first_day_of_boys_in_UK_school}, we have to consider two
cases:

{\bf Case 1:} \m $\tilde\varphi_{high}>2m\pi+\psi_1$ (this is the
easier case). According to Lemma \ref{pos_acc}, there exists a
smallest $\tau>0$ such that $(\psi_p(\tau,\varphi_{low}),\dot\psi_p
(\tau,\varphi_{low}))=(\tilde\varphi_{high},0)$. It is easy to see
that there exists $T>\tau$ such that \vspace{-0.5mm}
$$ 2m\pi+\psi_1 \m<\m \psi_p(t,\varphi_{low}) \m<\m \tilde\varphi_{high}
   \ \mbox{ and }\ \dot\psi_p(t,\varphi_{low}) \m<\m 0 \FORALL t\in
   (\tau,T] \m. \vspace{-0.5mm}$$
According to the standard result on the continuous dependence of
solutions of differential equations (satisfying a Lipschitz
condition) on their initial conditions, for any $\e>0$ there exists
an $N_\e\in\nline$ such that for all $n\in\nline$ with $2n\geq N_\e$,
\vspace{-1mm}
\BEQ{Oxfam_suitcase}
   |\psi_p(t,\vp_{low})-\psi_p(t,\vp^{2n})| + |\dot\psi_p(t,\vp_{low})
   -\dot\psi_p(t,\vp^{2n})| \m<\m \e\FORALL t\in[0,T] \m.%\vspace{-1mm}
\end{equation}
For each such $n$, let $\tau_{2n}$ be the smallest positive number
such that $(\psi_p(\tau_{2n},\varphi^{2n}),0)$ is the first negative
acceleration point of \m $\psi_p(\cdot,\varphi^{2n})$. Then it is easy
to verify, using \rfb{Oxfam_suitcase}, that \m $\e<|\dot\psi_p(T,
\varphi_{low})|$ \m implies \ $\dot\psi_p(T,\varphi^{2n})<0$, so that
\vspace{-1mm}
$$ \mbox{if }\ \ \e \m<\m |\dot\psi_p(T,\varphi_{low})| \ \ \mbox{
   and  }\ \ 2n \m\geq\m N_\e \m,\ \ \mbox{ then }\ \ \tau_{2n}
   \m<\m T \m.\vspace{-1mm}$$
From here, by an easy argument using \rfb{Oxfam_suitcase} we obtain
that for $\e$ and $n$ as above, \m $\textrm{\bf d}(\xx\m,\m\Gamma_a)<
\e$ \ for all \m $\xx\in\Gamma_{2n+1}$. Clearly this implies
\rfb{first_day_of_boys_in_UK_school}.

{\bf Case 2:} \m $\tilde\varphi_{high}=2m\pi+\psi_1$, so that
$(\tilde\varphi_{high},0)$ is a locally asymptotically stable (in
particular, Lyapunov stable) equilibrium point of \rfb{pend_max}.
From the Lyapunov stability, for every $\e>0$ there exists $\delta
_\e\in(0,\e)$ such that the following holds: if, for some $\varphi
\in\rline$ and $T>0$, \vspace{-6mm}
\BEQ{Kenilworth}
   \m\ \ \ \ \ |\psi_p(T,\varphi)-\tilde\varphi_{high}| + |\dot
   \psi_p(T,\varphi)| \m<\m \delta_\e \m,
\end{equation}
then $|\psi_p(t,\varphi)-\tilde\varphi_{high}| + |\dot\psi_p(t,\varphi
)|<\e$ for all $t\geq T$. For some $\e>0$, let $T>0$ be such that
\rfb{Kenilworth} holds, with $\varphi_{low}$ in place of $\varphi$ and
$\delta_\e/2$ in place of $\delta_\e$. Using again the standard result
on the continuous dependence of solutions of ODEs on their initial
conditions, there exists an $N_\e\in\nline$ such that for all $n\in
\nline$ with $2n\geq N_\e$, \rfb{Oxfam_suitcase} holds with $\delta_\e
/2$ in place of $\e$. Using the Lyapunov stability, this implies that
\m $|\psi_p(t,\varphi^{2n})-\tilde\varphi_{high}|+|\dot\psi_p(t,
\varphi^{2n})|<\e$ \m for all $t\geq T$. This implies that \m $\textrm
{\bf d}(\xx\m,\m\Gamma_a)<\e$ \ for all $\xx$ in the phase plane curve
of $\psi_p(\cdot,\varphi^{2n})$ (for positive time), and in particular
for all \m $\xx\in\Gamma_{2n+1}$ Thus, we have proved
\rfb{first_day_of_boys_in_UK_school} also in the second case.

For the remainder of this proof, for any $\varphi\in\rline$ we denote
by $\psi_n(\cdot,\varphi)$ the solution $\psi_n$ of \rfb{pend_min}
satisfying $\psi_n(0)=\varphi$ and $\dot\psi_n(0)=0$. Denote the
segment of the curve $(\psi_n(\cdot,\varphi_{high}),\dot\psi_n(\cdot,
\varphi_{high}))$ between $(\varphi_{high},0)$ and its first positive
acceleration point $(\tilde\varphi_{low},0)$ by $\Gamma_b$. We claim
that for any $\e>0$ there exists $N_\e\in\nline$ such that
\vspace{-2mm}
\BEQ{second_day_of_boys_in_UK_school}
   \mbox{ if }\ \ 2n \m\geq\m N_\e \m,\ \ \mbox{ then }\ \ \textrm
   {\bf d}(\xx\m,\m\Gamma_b) \m<\m \e \FORALL \xx\in\Gamma_{2n} \m.
\vspace{-2mm}\end{equation}
This implies that $\tilde\vp_{low}=\vp_{low}$. The proof of these
facts is similar to the proof of \rfb{first_day_of_boys_in_UK_school},
by replacing everywhere $\psi_p$ with $-\psi_n$, \rfb{pend_max} with
\rfb{pend_min}, $m$ with $-m$, $\vp^{2n}$ with $-\vp^{2n+1}$,
$\vp_{low}$ with $-\vp_{high}$ and viceversa, $\tilde\vp_{high}$ with
$-\tilde\vp_{low}$ and $\Gamma_a$ with $-\Gamma_b$.

It follows from $\tilde\varphi_{high}=\varphi_{high}$ and $\tilde
\varphi_{low}=\varphi_{low}$ that the union of the curves $\Gamma_a$
and $\Gamma_b$ defined above is a simple closed curve in the phase
plane (see Figure 4), which we denote by $\Gamma_c$. It follows from
\rfb{first_day_of_boys_in_UK_school} and
\rfb{second_day_of_boys_in_UK_school} that \rfb{Sainsbury} holds.
\end{proof}

\begin{remark}
Putting together \rfb{Putin}, the obvious $\varphi^2\leq\vp_{low}$
and the lower estimate for $\vp^2$ from Lemma \ref{neg_acc}, we have
(with the notation of the last proof) \vspace{-2mm}
$$ (2m-1)\pi+|\psi_2|<\vp_{low}\leq2m\pi+\psi_2<2m\pi+\psi_1\leq
   \vp_{high}<(2m+1)\pi-|\psi_1| \m.\vspace{-2mm}$$
\end{remark}

Recall the curves $\Gamma_a$ and $\Gamma_b$ introduced in the last
proof. We now show that, under some conditions, the regions in the
phase plane enclosed by $\Gamma_a$ and the horizontal axis (and by
$\Gamma_b$ and the horizontal axis) are convex, as illustrated in
Figure 4. Using this, in Lemma \ref{velocity} we derive upper bounds
for the heights of $\Gamma_a$ and $\Gamma_b$. These bounds are then
used to derive an estimate for $\varphi_{high}-\varphi_{low}$.

\begin{lemma} \label{convexity}
Let $\alpha$ satisfy \rfb{assumption} and $m\in\zline$. Recall
$\varphi_{low},\m\varphi_{high},\m\Gamma_a,\m\Gamma_b$ introduced in
the last proof. Denote the closed subsets of the phase plane enclosed
by the curve $\Gamma_a$ and the horizontal axis by $\Delta_a$, and by
the curve $\Gamma_b$ and the horizontal axis by $\Delta_b$. If
$\varphi_{high}\neq 2m\pi+\psi_1$, then $\Delta_a$ is convex. If
$\varphi_{low}\neq 2m\pi+\psi_2$, then $\Delta_b$ is convex.
\vspace{-7mm} \m
\end{lemma}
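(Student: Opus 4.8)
The plan is to work with the phase-plane curve $\Gamma_a$, which is a segment of the curve $(\psi_p(\cdot,\varphi_{low}),\dot\psi_p(\cdot,\varphi_{low}))$ corresponding to the solution of \rfb{pend_max} (forcing $\beta+d$) starting at $(\varphi_{low},0)$ with $\dot\psi_p>0$, running until the first negative acceleration point $(\tilde\varphi_{high},0)=(\varphi_{high},0)$. On this segment $\dot\psi_p>0$, so the curve is the graph of a $C^1$ function $h(\varphi)=\dot\psi_p|_{\psi_p=\varphi}$ over $\varphi\in(\varphi_{low},\varphi_{high})$, with $h(\varphi_{low})=h(\varphi_{high})=0$. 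Convexity of the region $\Delta_a$ enclosed by $\Gamma_a$ and the horizontal axis is equivalent to concavity of $h$, i.e. $h''\le 0$ on this interval. So the first step is to differentiate the phase-plane ODE \rfb{phase_eqn_max}, namely $h'(\varphi)=-\alpha+(\beta+d-\sin\varphi)/h(\varphi)$, to obtain a formula for $h''$ and show it is $\le 0$.

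Differentiating gives
$$ h''(\varphi) \m=\m \frac{-\cos\varphi}{h(\varphi)} - \frac{(\beta+d-\sin\varphi)\m h'(\varphi)}{h(\varphi)^2} \m=\m \frac{-\cos\varphi}{h} - \frac{(h'+\alpha)h'}{h} \m=\m \frac{-\cos\varphi - (h')^2 - \alpha h'}{h} \m,$$
where I used $(\beta+d-\sin\varphi)/h = h'+\alpha$. Since $h>0$ on the open interval, the sign of $h''$ is that of the numerator $N(\varphi)=-\cos\varphi-(h')^2-\alpha h'$. So the second step is to show $N(\varphi)\le 0$, i.e. $(h')^2+\alpha h'+\cos\varphi\ge 0$, for all $\varphi\in(\varphi_{low},\varphi_{high})$. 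Viewing this as a quadratic in $h'$, it holds automatically whenever $\cos\varphi\ge 0$ (both roots real and of the same sign, or the expression is a sum of nonneg pieces when $h'\ge 0$), and more generally whenever the discriminant $\alpha^2-4\cos\varphi\le 0$ is false we only need $h'$ to lie outside the root interval $\big[\tfrac{-\alpha-\sqrt{\alpha^2-4\cos\varphi}}{2},\tfrac{-\alpha+\sqrt{\alpha^2-4\cos\varphi}}{2}\big]$. The relevant range of $\varphi$ is, by the Remark just above, contained in $\big((2m-1)\pi+|\psi_2|,\,(2m+1)\pi-|\psi_1|\big)$, and in fact on $\Gamma_a$ we have $\varphi\in(\varphi_{low},\varphi_{high})$ with $\varphi_{high}<(2m+1)\pi-|\psi_1|$; where $\cos\varphi<0$ we must control $h'$.

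The main obstacle is thus the region where $\cos\varphi<0$ (roughly $\varphi$ near $(2m+1)\pi/2\cdots$, i.e. the part of $\Gamma_a$ past the top of the hill $\psi_p=(2m+1)\pi/2$... actually past where $\sin\psi_p=\beta+d$, i.e. $\psi_p>2m\pi+\psi_1$ lies in the "upper" half where gravity decelerates). There I expect to argue as follows: on the portion of $\Gamma_a$ with $\varphi>2m\pi+\pi/2$ we have $\sin\varphi$ decreasing and $\beta+d-\sin\varphi$ could be positive or negative; but $h'=-\alpha+(\beta+d-\sin\varphi)/h$, and one shows using the hypothesis $\varphi_{high}\ne 2m\pi+\psi_1$ — which is exactly what rules out the degenerate case where $\Gamma_a$ asymptotically crawls into the stable equilibrium and the "curve" pinches — together with the lower bound $h\ge$ some $\e>0$ away from the endpoints (from the estimates around \rfb{Pizza_Hut}) and the assumption \rfb{assumption} on $\alpha$, that $h'\ge -\alpha+\text{(something)}$ stays outside the bad root interval. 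Concretely, I would bound $|h'|$: since $h'\le$ (value at a turning point) and $h'\ge -\alpha$ trivially, and since $\alpha^2-4\cos\varphi\le\alpha^2+4$, the root interval has length $\sqrt{\alpha^2-4\cos\varphi}\le\sqrt{\alpha^2+4}$; one checks $h'$ exceeds the larger root. The cleanest route is: at the first negative acceleration point $h=0^+$ and $h'\to-\infty$, which is consistent with $N\le 0$ there; on the ascending part $h'\ge 0 > $ upper root is impossible... so one likely needs a monotonicity/bootstrap argument on $N$ itself, showing $N$ can only cross zero from above, analogous to the comparison arguments in Lemmas \ref{traj_above} and \ref{pos_acc}. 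The symmetric statement for $\Delta_b$ and $\Gamma_b$ follows by the change of variables $\psi_n\mapsto-\psi_p$, $\beta\mapsto-\beta$, $d\mapsto d$ (as in the proof of Lemma \ref{neg_acc}), swapping the roles of $\psi_1$ and $\psi_2$ and of $\varphi_{low}$ and $\varphi_{high}$, so I would dispatch it in one sentence at the end.
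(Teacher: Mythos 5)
Your reduction and first computation are exactly the paper's: on $\Gamma_a$ the curve is the graph of $f(\varphi)=\dot\psi_p|_{\psi_p=\varphi}$ with $f(\vp_{low})=f(\vp_{high})=0$, and convexity of $\Delta_a$ is equivalent to $f''\le 0$. Your formula $f''=-\bigl(\cos\varphi+(f')^2+\alpha f'\bigr)/f$ is algebraically identical to the paper's \rfb{Abbas_UN_bomb} after substituting $\beta+d-\sin\varphi=f(f'+\alpha)$. So up to that point you are aligned with the paper.

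The genuine gap is that you do not actually prove $\cos\varphi+(f')^2+\alpha f'\ge 0$, and the route you sketch for doing it (bounding $f'$ outside the root interval of the quadratic, or a "monotonicity/bootstrap on $N$") does not obviously close. Indeed, when $\cos\varphi<0$ the quadratic $x^2+\alpha x+\cos\varphi$ has a positive upper root, so the danger zone includes small positive values of $f'$, precisely the values $f'$ must pass through as the curve rounds its peak; and $f'$ ranges continuously from $+\infty$ at $\vp_{low}^+$ to $-\infty$ at $\vp_{high}^-$, so you cannot keep it "outside" that interval by a uniform bound. Moreover, your claim that $\cos\varphi\ge 0$ makes the inequality automatic is not correct for $f'<0$ with $0<\cos\varphi<\alpha^2/4$ (both roots are negative there). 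The paper instead uses the \emph{third} derivative: from $f'''=-\frac{3f'+\alpha}{f}f''+\frac{\sin\varphi}{f}$, one argues by contradiction. If $f''(\vp_0)>0$, then, since $f''<0$ near both endpoints (using $\vp_{low}<2m\pi+\psi_2$ on the left and $\vp_{high}\in(2m\pi+\psi_1,(2m+1)\pi-|\psi_1|)$ on the right via \rfb{Putin}), there is a rightmost zero $\eta_{low}$ of $f''$ in $(\vp_{low},\vp_0)$ with $f'''(\eta_{low})\ge 0$ and a leftmost zero $\eta_{high}$ in $(\vp_0,\vp_{high})$ with $f'''(\eta_{high})\le 0$. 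At a zero of $f''$ the formula for $f'''$ collapses to $\sin(\cdot)/f$, giving $\sin\eta_{low}\ge 0$ and $\sin\eta_{high}\le 0$; combined with the location constraints from the Remark preceding the lemma this forces $\eta_{low}\ge 2m\pi$ and $\eta_{high}\in(2m\pi,(2m+1)\pi)$, which is impossible since then $\sin\eta_{high}>0$. This $f'''$ device is the missing idea; the quadratic-in-$f'$ analysis alone does not suffice. The $\Delta_b$ case by the sign/variable change is fine as you say.
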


\begin{proof} \vspace{6mm}
In this proof, we denote by $\psi_p$ the solution of \rfb{pend_max}
corresponding to the initial condition $(\psi_p(0),\dot\psi_p(0))=
(\varphi_{low},0)$ and let $\tau$ be the time that it takes $(\psi
_p,\dot\psi_p)$ to reach $(\varphi_{high},0)$ (while moving along
$\Gamma_a$). We assume that $\varphi_{high}\neq 2m\pi+\psi_1$. Hence
by Lemma \ref{pos_acc}, $\tau<\infty$ and $\dot\psi_p(t)>0$ for all
$t\in(0,\tau)$. We consider the function $\psi_p$ only on the
interval $[0,\tau]$. For $\varphi\in[\varphi_{low},\varphi_{high}]$
we denote $f(\varphi)=\dot\psi_p(t)|_{\psi_p(t)=\varphi}$, so that
$\Gamma_a$ is the graph of $f$. The slope of $f$ is, according
to \rfb{phase_eqn_max}, \vspace{-2mm}
\BEQ{phase_max}
  f'(\varphi) \m=\m \frac{\dd\dot\psi_p(t)}{\dd\psi_p(t)} \bigg|
  _{\psi_p(t)=\varphi} \m=\m -\alpha + \frac{\beta + d-\sin\varphi}
  {f(\varphi)} \FORALL \varphi\in(\varphi_{low},\varphi_{high}) \m.
\end{equation}

%%%%%%%%%%%%%%%%%% *** *** FIGURE 4 *** *** %%%%%%%%%%%%%%%%%% *** %%%
\m\vspace{-10mm}
$$\includegraphics[scale=0.18]{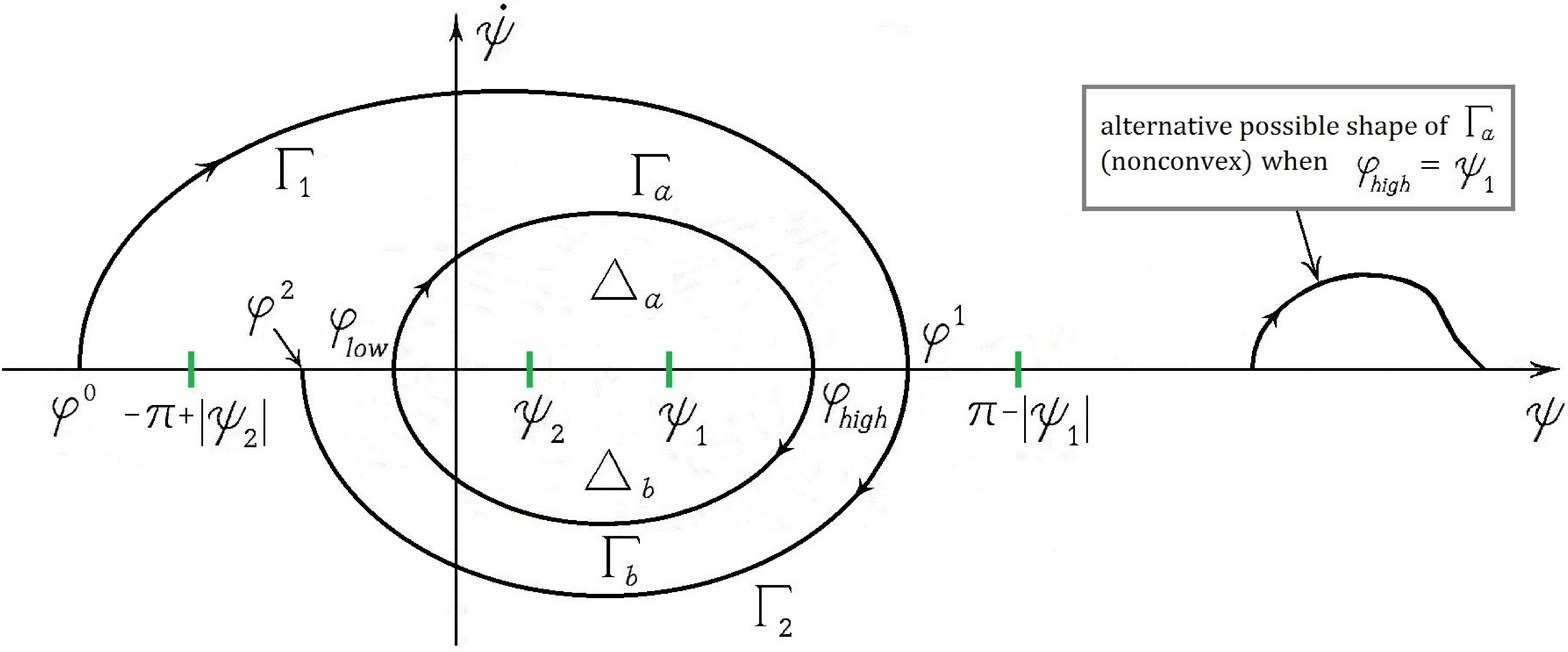}
\vspace{-1mm}$$ \vspace{-2mm}\centerline{ \parbox{5.3in}{
Figure 4. Possible shape of the curves $\Gamma_1$, $\Gamma_2$,
$\Gamma_a$ and $\Gamma_b$ in the phase plane, and of the sets
$\Delta_a$ and $\Delta_b$, when \rfb{assumption} holds and $m=0$.
We have shown the case when none of the equalities in
\rfb{Warwick_sunshine} holds and $\psi_2>0$.}}
%%%%%%%%%%%%%%%%%% *** End of Figure 4 *** %%%%%%%%%%%%%%%%%%% *** %%%

\bigskip
We claim that $f''(\varphi)\leq 0$ for all $\varphi\in(\varphi_{low},
\varphi_{high})$. It can be shown by a somewhat tedious computation
that for every $\varphi\in(\varphi_{low},\varphi_{high})$,
\BEQ{Abbas_UN_bomb}
   f''(\varphi) \m=\m -\frac{f(\varphi) \cos\varphi + (\beta+d-\sin
   \varphi)f'(\varphi)}{f(\varphi)^2} \m,
\end{equation}
\BEQ{F_secondder}
   f'''(\varphi) \m=\m -\frac{3f'(\varphi)+\alpha}{f(\varphi)} \cdot
   f''(\varphi) + \frac{\sin\varphi}{f(\varphi)} \m.
\end{equation}

Suppose that our claim is false. Then $f''(\vp_0)>0$ for some $\vp_0
\in(\vp_{low},\vp_{high})$. Using \rfb{Abbas_UN_bomb} and the facts that
$f(\vp_{low})=0$, $\vp_{low}<2m\pi+\psi_2$ (see Lemma \ref{neg_acc})
and $\psi_2<\psi_1$, is easy to verify that for a sufficiently small
$\e>0$, \m $f''(\vp)<0$ for each $\vp\in(\vp_{low}, \vp_{low}+\e)$.
Hence there exists $\eta_{low}\in(\vp_{low},\vp_0)$ such that
$$f''(\eta_{low}) \m=\m 0 \m,\qquad f'''(\eta_{low}) \m\geq\m 0\m.$$
This, using \rfb{F_secondder}, implies that $\sin\eta_{low}\geq 0$.
Since $\vp_{low}\geq(2m-1)\pi+|\psi_2|$ (see Lemma \ref{neg_acc}), we
conclude that $\eta_{low}\in[2m\pi,\vp_0)$ and hence $\vp_0>2m\pi$.

According to \rfb{Putin} we have $\vp_{high}\in(2m\pi+\psi_1,(2m+1)\pi
-|\psi_1|)$. It follows from \rfb{Abbas_UN_bomb} (using $f(\vp_{high})
=0$) that for a sufficiently small $\e>0$ we have $f''(\vp)<0$ for all
$\vp\in(\vp_{high}-\e,\vp_{high})$. Therefore there exists $\eta_{high}
\in(\vp_0,\vp_{high})$ such that \vspace{-2mm}
$$f''(\eta_{high}) \m=\m 0\m,\qquad f'''(\eta_{high}) \m\leq\m 0\m.$$
Using \rfb{F_secondder}, this gives us that $\sin\eta_{high}\leq 0$.
Since, according to our earlier steps, $\eta_{high}\in(2m\pi,
(2m+1)\pi)$, this is a contradiction, proving our claim. By a well
known fact in analysis, our claim implies that $\Delta_a$ is a convex
set.

When $\varphi_{low}\neq2m\pi+\psi_2$, the convexity of $\Delta_b$
can be established similarly.
\end{proof}

\begin{lemma} \label{velocity}
Let $m,\m\alpha,\m\Gamma_a,\m\Gamma_b,\m\varphi_{low}$ and $\varphi
_{high}$ be as in Lemma {\rm\ref{convexity}}. Define \vspace{-2mm}
$$ v_a \m=\m \max_{(\psi_p,\dot\psi_p)\in\Gamma_a} \dot
   \psi_p \m, \qquad v_b \m=\m \max_{(\psi_n,\dot\psi_n)
   \in\Gamma_b} |\dot\psi_n| \m$$
Recall $\, d,\psi_1$ and $\psi_2$ from \rfb{t1-t2}. Then the following
relations hold: \vspace{-1mm}
\begin{align}
   &\varphi_{high} \m\neq\m 2m\pi+\psi_1 \ \implies v_a \m<\m
   \frac{4d}{\alpha} \m,\label{est2}\\
   &\varphi_{low} \m\neq\m 2m\pi+\psi_2 \ \ \implies v_b \m<\m
   \frac{4d}{\alpha} \m,\label{est3}\\
   &\varphi_{high} \m\neq\m 2m\pi+\psi_1 \m, \ \ \varphi_{low}
   \m\neq\m 2m\pi+\psi_2 \implies v_a+v_b \m<\m \frac{4d}{\alpha}
   \m.\label{est1}
\end{align}
\m\ \ Furthermore \vspace{-4mm}
\BEQ{angle_est1}
   \varphi_{high}-\varphi_{low} \m<\m \psi_1-\psi_2+\frac{4d}
   {\alpha^2} \m,
\end{equation}
\vspace{-3mm}
\BEQ{Friday_visit}
  \varphi_{high} \m<\m \psi_1+2m\pi+\frac{4d}{\alpha^2} \m,\qquad
  \varphi_{low}  \m>\m 2m\pi+\psi_2-\frac{4d}{\alpha^2} \m.
\end{equation}
\end{lemma}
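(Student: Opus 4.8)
The plan is to reduce everything to one exact ``energy balance'' for the closed curve $\Gamma_c=\Gamma_a\cup\Gamma_b$ and then read off the six inequalities from it together with the convexity supplied by Lemma \ref{convexity}. Write $b=\varphi_{high}-\varphi_{low}$; the chain $(2m-1)\pi+|\psi_2|<\varphi_{low}\leq 2m\pi+\psi_2<2m\pi+\psi_1\leq\varphi_{high}<(2m+1)\pi-|\psi_1|$ established above gives $b>0$. Along $\Gamma_a$ the solution $\psi_p$ of \rfb{pend_max} through $(\varphi_{low},0)$ is strictly increasing with $\dot\psi_p>0$ on the interior, so $\Gamma_a$ is the graph of a continuous $f_a\geq 0$ on $[\varphi_{low},\varphi_{high}]$ that vanishes at both endpoints, $v_a=\max f_a$, and the area $A_a$ of $\Delta_a$ equals $\int_{\varphi_{low}}^{\varphi_{high}}f_a(\varphi)\,\dd\varphi$; similarly $\Gamma_b$ is the graph of $f_b\geq 0$ with $v_b=\max f_b$ and $A_b=\int_{\varphi_{low}}^{\varphi_{high}}f_b$. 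Integrating the energy relation \rfb{energy_der} (with $\beta+\gamma$ replaced by the constant $\beta+d$) along $\Gamma_a$, using $\dot\psi_p=f_a(\psi_p)$ and the change of variable $\varphi=\psi_p$, I get $(1-\cos\varphi_{high})-(1-\cos\varphi_{low})=-\alpha A_a+(\beta+d)b$; the same computation along $\Gamma_b$ (now with $\beta-d$ and $\dot\psi_n=-f_b(\psi_n)<0$) gives $(1-\cos\varphi_{low})-(1-\cos\varphi_{high})=-\alpha A_b-(\beta-d)b$. Adding these and using $(\beta+d)-(\beta-d)=2d$ from \rfb{t1-t2} yields the identity $\alpha(A_a+A_b)=2d\,b$, which remains valid even if one of $\Gamma_a,\Gamma_b$ takes infinite time.

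Next I would deduce \rfb{est2}, \rfb{est3}, \rfb{est1}. Suppose $\varphi_{high}\neq 2m\pi+\psi_1$, so $\Delta_a$ is convex by Lemma \ref{convexity}; then $\Delta_a$ contains the triangle with vertices $(\varphi_{low},0)$, $(\varphi_{high},0)$ and $(\varphi^*,v_a)$ (with $f_a(\varphi^*)=v_a$), of area $\half b\,v_a$, so $A_a\geq\half b\,v_a$. Since $f_b$ is continuous, nonnegative and strictly positive on the nonempty interior, $A_b>0$, hence the identity gives $\alpha A_a<2db$ and therefore $v_a<4d/\alpha$, i.e.\ \rfb{est2}; \rfb{est3} follows symmetrically. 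If moreover $\varphi_{low}\neq 2m\pi+\psi_2$ then $\Delta_b$ is convex as well, so also $A_b\geq\half b\,v_b$; here the estimate $A_a\geq\half b\,v_a$ is in fact strict, because equality would force $f_a$ to be affine near $\varphi_{low}$, contradicting the fact (visible in \rfb{phase_eqn_max}, since $(\varphi_{low},0)$ is a positive acceleration point) that the slope of $f_a$ tends to $+\infty$ as $\varphi\to\varphi_{low}^{+}$. Adding the two area bounds and using the identity gives $\half\alpha b(v_a+v_b)<2db$, which is \rfb{est1}.

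For \rfb{Friday_visit} and \rfb{angle_est1} I would use a phase-plane overshoot argument. When $\varphi_{high}\neq 2m\pi+\psi_1$, the angle $2m\pi+\psi_1$ lies strictly inside the interval $(\varphi_{low},\varphi_{high})$ traversed by $\psi_p$, and a short check (treating $\psi_1\geq 0$ and $\psi_1<0$ separately) shows $\sin\varphi>\sin\psi_1=\beta+d$ for every $\varphi\in(2m\pi+\psi_1,(2m+1)\pi-|\psi_1|)\supseteq(2m\pi+\psi_1,\varphi_{high}]$; hence \rfb{pend_max} gives $\ddot\psi_p=-\alpha\dot\psi_p-(\sin\psi_p-\sin\psi_1)<-\alpha\dot\psi_p$ there, i.e.\ $\frac{\dd\dot\psi_p}{\dd\psi_p}<-\alpha$. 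Integrating this from $\psi_p=2m\pi+\psi_1$ to $\psi_p=\varphi_{high}$ (where $\dot\psi_p=0$) gives $\varphi_{high}-(2m\pi+\psi_1)<f_a(2m\pi+\psi_1)/\alpha\leq v_a/\alpha<4d/\alpha^2$ by \rfb{est2}; for $\varphi_{high}=2m\pi+\psi_1$ the bound $\varphi_{high}<2m\pi+\psi_1+4d/\alpha^2$ is trivial. This is the first half of \rfb{Friday_visit}, and the mirror-image argument along $\Gamma_b$ (using \rfb{pend_min}, the angle $2m\pi+\psi_2$ and \rfb{est3}) gives the second half. Finally, writing $\varphi_{high}-\varphi_{low}=(\varphi_{high}-2m\pi-\psi_1)+(\psi_1-\psi_2)+(2m\pi+\psi_2-\varphi_{low})$ and bounding the first and third terms (when nonzero) by $f_a(2m\pi+\psi_1)/\alpha\leq v_a/\alpha$ and $f_b(2m\pi+\psi_2)/\alpha\leq v_b/\alpha$ as above, then invoking $v_a+v_b<4d/\alpha$ from \rfb{est1} in the case where both terms are nonzero (and \rfb{est2} or \rfb{est3} in the degenerate cases), I obtain \rfb{angle_est1}.

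The hard part is the first two steps: noticing that $\Gamma_a$ and $\Gamma_b$ fit together into a closed loop --- which rests on the equalities $\tilde\varphi_{high}=\varphi_{high}$ and $\tilde\varphi_{low}=\varphi_{low}$ obtained in the proof of Lemma \ref{lm:env_traj} --- and extracting from it the sharp balance $\alpha(A_a+A_b)=2db$, then coupling it with the convexity of $\Delta_a,\Delta_b$ via the elementary observation that a convex set sitting over a base of length $b$ with greatest height $v$ has area at least $\half b v$. Once these are in place, the remaining estimates are essentially bookkeeping.
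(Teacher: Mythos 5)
Your proposal is correct, and its backbone—the exact energy balance $\alpha(A_a+A_b)=2d(\varphi_{high}-\varphi_{low})$ around the closed curve $\Gamma_c$, coupled with the triangle-area lower bound supplied by the convexity of $\Delta_a$ and $\Delta_b$—is exactly the paper's argument for \rfb{est2}, \rfb{est3} and \rfb{est1} (the paper derives the balance in one step from \rfb{energy_phaseplane} with the switching forcing \rfb{smoke_alarm}; you split it into two constant-forcing arcs and add, which is equivalent). Where you genuinely deviate is in \rfb{Friday_visit} and \rfb{angle_est1}: the paper again invokes energy together with convexity on the sub-arc from $2m\pi+\psi_1$ to $\varphi_{high}$ (leading to \rfb{ineq_for_est} and \rfb{BBQ_at_X1}), whereas you observe that $\sin\varphi>\sin\psi_1$ forces $\dd\dot\psi_p/\dd\psi_p<-\alpha$ on that sub-arc and simply integrate the slope inequality. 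This is a cleaner route to the same bound $\varphi_{high}-(2m\pi+\psi_1)<v_a/\alpha$ and sidesteps convexity for that step. You also supply an explicit justification (the blow-up of $f_a'$ at $\varphi_{low}$) for the strictness needed in \rfb{est1}, a point the paper leaves implicit, and you correctly note and handle the degenerate cases $\varphi_{high}=2m\pi+\psi_1$ or $\varphi_{low}=2m\pi+\psi_2$ in assembling \rfb{angle_est1}.
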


\begin{proof}
First assume that $\varphi_{high}\neq 2m\pi+\psi_1$. Then $\Gamma_c$,
the closed curve that is the union of $\Gamma_a$ and $\Gamma_b$
(introduced in Lemma \ref{lm:env_traj}), can be regarded as a segment
of the curve corresponding to the solution $(\psi,\dot\psi)$ of
\rfb{pendulum} when $(\psi(0),\dot\psi(0))=(\varphi_{low},0)$ and
$\gamma$ is given by \rfb{smoke_alarm}. This solution takes a finite
time $\tau$ to reach $(\vp_{high},0)$ and then a possibly infinite
amount of time to return to $(\vp_{low},0)$. We denote by $\tau_c$ the
(possibly infinite) time that it takes for $(\psi,\dot\psi)$ to go
around the closed curve $\Gamma_c$.

Recall the function $f:[\m\varphi_{low},\varphi_{high}]\rarrow[0,
\infty)$ introduced before \rfb{phase_max}, so that $\Gamma_a$ is the
graph of $f$. Similarly, we introduce $g:[\varphi_{low},\varphi
_{high}]\rarrow(-\infty,0\m]$ so that $\Gamma_b$ is the graph of $g$.
Since $E(0)=\lim_{t\to\tau_c}E(t)$, it follows from
\rfb{energy_phaseplane} that \vspace{-1mm}
\BEQ{energy_gammac}
   \int_{\varphi_{low}}^{\varphi_{high}} \alpha [f(\varphi)-g
   (\varphi)] \m\dd\varphi \m=\m 2d (\varphi_{high}-\varphi_{low})\m.
   \vspace{-1mm}
\end{equation}

Let $\psi_+\in(\vp_{low},\vp_{high})$ be the angle at which $f$
reaches its maximum $v_a$. From Lemma \ref{convexity} we know that
$\Delta_a$ is convex. Hence, the triangle in the phase plane with
vertices $(\vp_{low},0)$, $(\vp_{high},0)$ and $(\psi_+,v_a)$ lies
inside $\Delta_a$. Therefore \vspace{-2mm}
\BEQ{damp_est1}
   \int_{\varphi_{low}}^{\varphi_{high}} f(\vp) \m\dd\vp \m\geq\m
   \frac{v_a}{2} (\varphi_{high}-\varphi_{low}) \m.\vspace{-1mm}
\end{equation}
This and \rfb{energy_gammac} imply that \m $2d(\vp_{high}-\vp_{low})
>\frac{\alpha v_a}{2}(\vp_{high}-\vp_{low})$, whence \rfb{est2}.

Now replace the assumption $\varphi_{high}\neq 2m\pi+\psi_1$ with
$\varphi_{low}\neq 2m\pi+\psi_2$. By repeating the above arguments
after \rfb{energy_gammac}, but using the function $g$ and the convex
set $\Delta_b$, we get that \vspace{-2mm}
\BEQ{damp_est2}
   \int_{\varphi_{low}}^{\varphi_{high}} -g(\vp) \m\dd\vp \m\geq\m
   \frac{v_b}{2} (\varphi_{high}-\varphi_{low})  \vspace{-1mm}
\end{equation}
which, together with \rfb{energy_gammac}, implies \rfb{est3}. Finally
if $\vp_{high}\neq 2m\pi+\psi_1$ and $\vp_{low}\neq 2m\pi+\psi_2$,
then both \rfb{damp_est1} and \rfb{damp_est2} hold, which together
with \rfb{energy_gammac} imply \rfb{est1}.

Next we will derive \rfb{angle_est1}. First assume that $\vp_{high}
\neq 2m\pi+\psi_1$. Let $\psi_p$ and $\tau$ be as at the beginning of
the proof of Lemma \ref{convexity} (so that $\psi_p:[0,\tau]\rarrow
[0,v_a]$, $\psi_p(0)=\vp_{low}$ and $\psi_p(\tau)=\vp_{high}$). Let
$\tau_1\in[0,\tau]$ be such that $\psi_p(\tau_1)=2m \pi+\psi_1$.
Using the energy from \rfb{energy_fnc} (and \rfb{energy_phaseplane}
with $d$ in place of $\gamma$) we get \vspace{-2mm}
$$ \frac{\dot\psi_p(\tau_1)^2}{2} \m=\m E(\tau_1)-E(\tau)+
   \cos\psi_1-\cos\varphi_{high} \hspace{42mm} \m\vspace{-6mm}$$
\begin{align}
  &\m\qquad=\m \int_{2m\pi+\psi_1}^{\vp_{high}} \alpha\dot\psi_p|
  _{\psi_p=\vp} \m\dd\vp + \int_{2m\pi+\psi_1}^{\vp_{high}}
  (\sin\varphi-\beta-d) \dd\varphi \nonumber\\[3pt]
  &\m\qquad\geq\m \frac{\alpha}{2}(\varphi_{high}-\psi_1-2m\pi)\dot\psi_p
  (\tau_1)+\int_{2m\pi+\psi_1}^{\varphi_{high}} (\sin\varphi-\beta
  -d)\dd\varphi \m. \label{ineq_for_est}
\end{align}

\vspace{-3mm}
To derive the last inequality, we have used the convexity of the
set $\Delta_a$. Since the integral term in \rfb{ineq_for_est}
is positive it follows that if $\vp_{high}\neq 2m\pi+\psi_1$ then
\vspace{-2mm}
\BEQ{BBQ_at_X1}
  \varphi_{high}-\psi_1-2m\pi \m<\m \frac{\dot\psi_p(\tau_1)}
  {\alpha} \m\leq\m \frac{v_a}{\alpha} \m.\vspace{-2mm}
\end{equation}

Next assume that $\varphi_{low}\neq 2m\pi+\psi_2$. Doing a similar
argument as we did to derive \rfb{BBQ_at_X1}, but now working on the
curve $\Gamma_b$ instead of $\Gamma_a$, and using the convexity of
$\Delta_b$, we get that \vspace{-3mm}
\BEQ{BBQ_at_X2}
   2m\pi+\psi_2-\varphi_{low} \m<\m \frac{v_b}{\alpha} \m.
\end{equation}
Finally, \rfb{angle_est1} follows by adding \rfb{BBQ_at_X1} and
\rfb{BBQ_at_X2}, using \rfb{est2}-\rfb{est1}. The inequalities
\rfb{Friday_visit} follow immediately from \rfb{BBQ_at_X1} and
\rfb{BBQ_at_X2}, using \rfb{est2}-\rfb{est3}.
\end{proof}

\begin{lemma} \label{why_trajenv}
We use the assumptions and the notation of Definition {\rm
\ref{traj_env}}. For each even $k\in\nline$, let $\Delta_k$ be the
closure of the set encircled by the curves $\Gamma_{k-1}$, $\Gamma_k$
and the line $L_{\m k}$ joining the points $(\vp^{k-2},0)$ and
$(\vp^k,0)$. Then for any solution $\psi$ of \rfb{pendulum}, if
$(\psi(0),\dot \psi(0)) \in\Delta_k$, then $(\psi(t),\dot\psi(t))\in
\Delta_k$ for all $t\geq 0$. \vspace{-2mm}
\end{lemma}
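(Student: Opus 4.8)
The plan is to prove that $\Delta_k$ is a positively invariant (trapping) region for the flow of \rfb{pendulum}, by checking that along the whole boundary $\partial\Delta_k=\Gamma_{k-1}\cup\Gamma_k\cup L_{\m k}$ the vector field of \rfb{pendulum} never points strictly out of $\Delta_k$, uniformly over the admissible forcing values $\gamma(t)\in(-d,d)$. Invariance then follows by the usual first-exit-time argument: if a solution $(\psi,\dot\psi)$ with $(\psi(0),\dot\psi(0))\in\Delta_k$ ever left $\Delta_k$, one sets $t^\ast=\sup\{t\ge0:(\psi(s),\dot\psi(s))\in\Delta_k\ \forall\,s\in[0,t]\}$; since $\Delta_k$ is closed, $(\psi(t^\ast),\dot\psi(t^\ast))\in\partial\Delta_k$, and there are times $t\downarrow t^\ast$ with $(\psi(t),\dot\psi(t))\notin\Delta_k$, which is contradicted by showing that from each boundary point the solution re-enters $\Delta_k$.

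Before the boundary analysis I would pin down the geometry. From the proof of Lemma \ref{lm:env_traj} the subsequence $(\varphi^{2j})$ is nondecreasing, so $\varphi^{k-2}\le\varphi^k$; also Lemma \ref{neg_acc} gives $\varphi^k\le 2m\pi+\psi_2$ while Lemma \ref{pos_acc} gives $\varphi^{k-1}\ge 2m\pi+\psi_1>2m\pi+\psi_2$, hence $\varphi^k<\varphi^{k-1}$ and $\Delta_k$ lies in the strip $\psi\in[\varphi^{k-2},\varphi^{k-1}]$: capped above by $\Gamma_{k-1}$ (a graph, since $\dot\psi_p^{k-1}>0$ on its interior, over $[\varphi^{k-2},\varphi^{k-1}]$), bounded below by the axis segment $L_{\m k}$ over $[\varphi^{k-2},\varphi^k]$ and by $\Gamma_k$ (a graph, over $[\varphi^k,\varphi^{k-1}]$). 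Outside the degenerate case $\varphi^{k-2}=\varphi^k$ these three arcs meet only at the corners $(\varphi^{k-2},0),(\varphi^{k-1},0),(\varphi^k,0)$, so $\partial\Delta_k$ is a Jordan curve and $\Delta_k$ is a well-defined compact region. The one computation needed here is that on $[\varphi^{k-2},\varphi^k]$ one has $\sin\varphi\le\sin\psi_2=\beta-d$: indeed $\varphi^{k-2}\ge\varphi^0=(2m-1)\pi-\psi_2$ and $\varphi^k\le 2m\pi+\psi_2$, and on $[(2m-1)\pi-\psi_2,2m\pi+\psi_2]$ (which contains no crest of the sine) the sine is maximised, only at the endpoints, by $\sin\psi_2$. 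Consequently, whenever a solution of \rfb{pendulum} has $\dot\psi=0$ with $\psi\in[\varphi^{k-2},\varphi^k]$, we get $\ddot\psi=\beta+\gamma(t)-\sin\psi\ge d+\gamma(t)>0$; and at $(\varphi^{k-1},0)$, using $\sin\varphi^{k-1}\ge\sin\psi_1=\beta+d$ and $\gamma(t)<d$, we get $\ddot\psi<0$.

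The boundary analysis has three parts. On the interior of $L_{\m k}$ (where $\dot\psi=0$) the vector field is $(0,\ddot\psi)$ with $\ddot\psi>0$, hence points straight up into $\Delta_k$, which near the interior of $L_{\m k}$ lies in the upper half-plane below $\Gamma_{k-1}$. On the interior of $\Gamma_{k-1}$ (where $\dot\psi>0$) I compare phase-plane slopes: writing $\Gamma_{k-1}$ as the graph of a $C^1$ function $f$, equations \rfb{phase_eqn} and \rfb{phase_eqn_max} give, at a point where a solution of \rfb{pendulum} touches $\Gamma_{k-1}$, that $\rho(t)=\dot\psi(t)-f(\psi(t))$ satisfies $\rho=0$ and $\dot\rho=\gamma(t)-d<0$, so $\rho$ becomes negative immediately and the solution stays below $\Gamma_{k-1}$; having $\dot\psi>0$ there, it also stays above the lower boundary and, by continuity of $\psi$, within the strip, hence in $\Delta_k$. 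The interior of $\Gamma_k$ (where $\dot\psi<0$) is symmetric: writing it as the graph of $g$ and using \rfb{phase_eqn} with \rfb{phase_eqn_max} for \rfb{pend_min} (equivalently, Lemma \ref{traj_below}), the function $\sigma(t)=\dot\psi(t)-g(\psi(t))$ has $\sigma=0$, $\dot\sigma=\gamma(t)+d>0$ at a contact point, so the solution stays above $\Gamma_k$, and being below the axis it stays in $\Delta_k$.

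The step I expect to be the main obstacle is the three corners, where two arcs meet and both $f$ and $g$ vanish with vertical tangents (square-root type zeros), so the vertical push established above competes with a boundary arc that is itself vertical to first order and a crude Taylor comparison is inconclusive. I would instead invoke Lemmas \ref{traj_above} and \ref{traj_below} directly, using that at a corner a solution of \rfb{pendulum} and the arc meeting it share the same initial data. For example at $(\varphi^{k-2},0)$: a solution $\psi$ with $(\psi(t^\ast),\dot\psi(t^\ast))=(\varphi^{k-2},0)$ has $\ddot\psi(t^\ast)>0$, so $\dot\psi>0$ just after $t^\ast$; as long as $\dot\psi\ge0$, Lemma \ref{traj_above} applied with initial time $t^\ast$ against the very solution $\psi_p^{k-1}$ of \rfb{pend_max} whose graph is $\Gamma_{k-1}$ (they agree at $(\varphi^{k-2},0)$) keeps $\psi$ below $\Gamma_{k-1}$, while the positive-acceleration estimate on $[\varphi^{k-2},\varphi^k]$ forces $\dot\psi\ge0$ at least until $\psi$ leaves $[\varphi^{k-2},\varphi^k]$; hence $\psi$ cannot exit $\Delta_k$ through this corner. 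The corners $(\varphi^{k-1},0)$ and $(\varphi^k,0)$ are treated the same way, using $\ddot\psi<0$, resp.\ $\ddot\psi>0$, together with Lemmas \ref{traj_above}/\ref{traj_below}. Finally I would record the degenerate case $\varphi^{k-2}=\varphi^k$ (possible when the spiral has already converged at stage $k$): then $L_{\m k}$ collapses to a point, $\Delta_k$ is the region between $\Gamma_{k-1}$ and $\Gamma_k$ only, and the same argument applies with the $L_{\m k}$ piece omitted.
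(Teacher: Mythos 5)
Your proof is correct and takes essentially the same route as the paper: the paper likewise invokes Lemma \ref{traj_above} to show trajectories cannot cross $\Gamma_{k-1}$, Lemma \ref{traj_below} for $\Gamma_k$, and the positive-acceleration observation on $L_k$, while your version simply spells out the boundary decomposition, the slope comparison, and the corner cases that the paper leaves implicit.
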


\begin{proof}
Fix $k$ and let $\psi$ be a solution of \rfb{pendulum} with $(\psi(0),
\dot\psi(0))\in\Delta_k$. It follows from Lemma \ref{traj_above} that
the curve $(\psi,\dot\psi)$ cannot go out of $\Delta_k$ by crossing
the curve $\Gamma_{k-1}$ and from Lemma \ref{traj_below} that it
cannot go out of $\Delta_k$ by crossing $\Gamma_k$. It is easy to
check that the curve $(\psi,\dot\psi)$ cannot escape through $L_{\m
k}$, because if it is on $L_{\m k}$, then the velocity $\dot\psi$
along the curve starts to increase, forcing the curve to stay within
$\Delta_k$.
\end{proof}

\begin{theorem} \label{main_result_sec5}
Using the notation from \rfb{t1-t2}, suppose that $\alpha$ satisfies
\rfb{assumption}. Then for every solution $\psi$ of \m \rfb{pendulum},
there exists a $T>0$ such that for any $t_1,t_2>T$, \vspace{-3mm}
\BEQ{convergence_est1}
  |\psi(t_1)-\psi(t_2)| \m<\m \psi_1-\psi_2+\frac{4d}{\alpha^2} \,.
\vspace{-1mm} \end{equation}
Moreover, for some integer $m$ and each $t>T$, one of the
following expressions hold: \vspace{-2mm}
\BEQ{final_estimate1}
  2m\pi+\psi_2-\frac{4d}{\alpha^2} \m<\m \psi(t) \m<\m 2m\pi+
  \psi_1+\frac{4d}{\alpha^2} \m,
\end{equation}
\BEQ{final_estimate2}
  (2m+1)\pi-\psi_1 \m<\m \psi(t) \m<\m (2m+1)\pi-\psi_2 \m.
\end{equation} \m\vspace{-8mm}
\end{theorem}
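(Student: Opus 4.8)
The plan is to prove the following \emph{confinement claim}: for every solution $\psi$ there is a time $T$ after which the phase-plane curve $t\mapsto(\psi(t),\dot\psi(t))$ is contained either in one of the invariant ``lens'' regions $\Delta_k$ of Lemma \ref{why_trajenv} (for a suitable integer $m$ and a sufficiently large even $k$) or in the open strip $(2m+1)\pi-\psi_1<\psi<(2m+1)\pi-\psi_2$. Granting this, the theorem follows at once: for $k$ large the $\psi$-extent of $\Delta_k$ is the interval $[\varphi^{k-2},\varphi^{k-1}]$, whose length decreases to $\varphi_{high}-\varphi_{low}$, which by \rfb{angle_est1} is $<\psi_1-\psi_2+4d/\alpha^2$, and by \rfb{Friday_visit} this interval lies in $(2m\pi+\psi_2-4d/\alpha^2,\,2m\pi+\psi_1+4d/\alpha^2)$ once $k$ is large. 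So, taking $T$ to be the entrance time into such a $\Delta_k$, Lemma \ref{why_trajenv} gives \rfb{convergence_est1} and \rfb{final_estimate1}; in the strip case \rfb{final_estimate2} is immediate and \rfb{convergence_est1} holds because the strip has length $\psi_1-\psi_2$.

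First I would dispose of the solutions that are eventually monotone. By \rfb{JF_election} the velocity $\dot\psi$ is bounded, and applying Proposition \ref{similar_hayes} to the time-translates $\psi(\cdot+T)$ shows that an unbounded solution cannot have $\dot\psi$ eventually of one sign; hence if $\dot\psi$ is eventually of one sign, then $\psi$ is bounded and monotone, so $\psi(t)\to L$ and $\dot\psi(t)\to0$. Passing to the limit in \rfb{pendulum} and using $\|\gamma\|_{L^\infty}<d$: if $\sin L>\beta+d$ then $\ddot\psi(t)$ is eventually bounded above by a negative constant, forcing $\dot\psi(t)\to-\infty$, a contradiction; $\sin L<\beta-d$ is impossible for the symmetric reason; and $L\bmod2\pi\in\{\pi-\psi_1,\pi-\psi_2\}$ is excluded because there $\ddot\psi(t)\to\gamma(t)\mp d$ stays bounded away from $0$. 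Hence $L\bmod2\pi\in[\psi_2,\psi_1]\cup(\pi-\psi_1,\pi-\psi_2)$, so for $t$ large one of \rfb{final_estimate1}, \rfb{final_estimate2} holds, and \rfb{convergence_est1} is trivial.

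It remains to treat solutions whose velocity vanishes at arbitrarily large times, through their ``turning points'' $(\varphi,0)$ (points with $\dot\psi=0$); at such a point the sign of $\ddot\psi$ is that of $\beta+\gamma-\sin\varphi$. Suppose the solution has a turning point $(\varphi,0)$ with $\varphi\in((2m-1)\pi-\psi_2,\,2m\pi+\psi_1)$ for some $m$; then $\varphi$ lies in the interval $[\varphi^0,\varphi^1]$ of the spiral of Definition \ref{traj_env} started at $\varphi^0=(2m-1)\pi-\psi_2$, so $(\varphi,0)\in\Delta_2$ and, by Lemma \ref{why_trajenv}, the curve stays in $\Delta_2$ thereafter (this covers every positive-acceleration turning point except those in an uncertain interval about an unstable equilibrium, and a negative-acceleration turning point outside such intervals is handled by passing to $-\psi$). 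Now, once the curve has a left turning point $(\varphi,0)$ to the right of $\varphi^{2j}$ it lies in $\Delta_{2j+2}$ -- since one checks $\varphi^{2j}<\varphi<\varphi^{2j+1}$, so $(\varphi,0)\in\Delta_{2j+2}$ -- and the claim reduces to showing that such turning points occur for every $j$. For this I would compare the curve's successive up- and down-swings with the spiral arcs $\Gamma_k$ via Lemmas \ref{traj_above}, \ref{traj_below} (and the non-crossing of solutions of \rfb{pend_max}, \rfb{pend_min}): since by Lemma \ref{lm:env_traj} the arcs converge to $\Gamma_c$ with $\varphi^{2j}\to\varphi_{low}$ and $\varphi^{2j+1}\to\varphi_{high}$, this squeezes the curve's turning points toward $\varphi_{low},\varphi_{high}$, forcing the curve into $\Delta_k$ for every large even $k$. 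Finally, if every turning point lies in an uncertain interval $((2k+1)\pi-\psi_1,(2k+1)\pi-\psi_2)$ about an unstable equilibrium, then by an argument paralleling Proposition \ref{similar_hayes} (when $\alpha$ satisfies \rfb{assumption} the solution cannot swing repeatedly over a ``hump'') the curve is eventually confined to a single such interval, which is \rfb{final_estimate2}.

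The main obstacle is the third paragraph, and within it the step showing that after entering $\Delta_2$ the oscillating solution is funneled into $\Delta_k$ for arbitrarily large $k$ -- i.e.\ that its turning points eventually lie to the right of every $\varphi^{2j}$ (and to the left of every $\varphi^{2j+1}$) -- together with the analysis of turning points that land in the ``uncertain'' intervals, where one must separate settling near a stable equilibrium from settling near an unstable one. This needs a careful comparison of the solution's own excursions with the spiral arcs and, at each stage, a correct choice of which spiral (which $m$) and which arc $\Gamma_k$ to compare against; Proposition \ref{similar_hayes} is what rules out a solution that wanders indefinitely across infinitely many equilibria. The comparison Lemmas \ref{traj_above}--\ref{traj_below}, the invariance Lemma \ref{why_trajenv}, the spiral-convergence Lemma \ref{lm:env_traj} and Proposition \ref{similar_hayes} are exactly the tools this demands.
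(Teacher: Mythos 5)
Correct: your outline follows the paper's proof almost line for line — the same dichotomy between eventually-monotone (``non-oscillating'') and oscillating solutions, with Proposition \ref{similar_hayes} and a Barb\u alat-type argument handling the first case, and for the second case the same trapping-in-$\Delta_2$-or-confinement-near-an-unstable-equilibrium alternative followed by the funnelling-into-$\Delta_k$ induction and the estimates \rfb{angle_est1}, \rfb{Friday_visit}. For the induction step you single out as the main obstacle, the paper does not need to compare against the spiral arcs directly: once the curve is known to lie in $\Delta_k$ for $t\ge\tau_k$, pick $T>\tau_k$ with $\dot\psi(T)<0$ and let $\tau_{k+2}$ be the first later time with $\dot\psi=0$; then $\psi(\tau_{k+2})\in[\varphi^k,\,2m\pi+\psi_1]$ (the lower bound because the part of $\Delta_k$ strictly below the axis projects onto $(\varphi^k,\varphi^{k-1}]$, the upper bound from \rfb{pendulum} since $\ddot\psi(\tau_{k+2})\ge 0$), so $(\psi(\tau_{k+2}),0)\in\Delta_{k+2}$ and Lemma \ref{why_trajenv} does the rest.
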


\begin{proof}
We call a solution $\psi$ of \rfb{pendulum} {\em oscillating}
\m if for each $T>0$ there exists $t_1,t_2>T$ such that $\dot\psi
(t_1)>0$ and $\dot\psi(t_2)<0$.

First consider the case when the solution $\psi$ is not oscillating
(hence it is eventually non-increasing or non-decreasing).  It then
follows from Proposition \ref{similar_hayes} that $\psi$ must remain
bounded and hence it converges to a finite limit, $\lim_{t\to\infty}
\psi(t)=\psi_\infty$, which trivially implies \rfb{convergence_est1}.
It follows from \rfb{pendulum} and \rfb{JF_election} that $\ddot\psi$
is a continuous bounded function of time. So we can apply Barb\u
alat's lemma (see \cite{FaWe:15,Kha:02,LoRy:04}) to $\dot\psi$ to
conclude that $\lim_{t\to\infty}\dot\psi(t)=0$. Using this and taking
upper and lower limits in \rfb{pendulum}, we get \vspace{-2mm}
$$ \limsup_{t\to\infty} \ddot\psi(t) \m=\m \beta + \limsup_{t\to
   \infty} \gamma(t)-\sin\psi_\infty \m\geq\m 0 \m,\vspace{-2mm}$$
$$ \liminf_{t\to\infty} \ddot\psi(t) \m=\m \beta + \liminf_{t\to
   \infty} \gamma(t)-\sin\psi_\infty \m\leq\m 0 \m,$$
whence \ $\beta-d<\beta+\liminf_{t\to\infty}\gamma(t)\leq\sin\psi
_\infty\leq\beta+\limsup_{t\to\infty}\gamma(t)<\m \beta+d$\m. In
short, \m $\sin\psi_1<\sin\psi_\infty<\sin\psi_2$\m. This implies that
either \rfb{final_estimate1} or \rfb{final_estimate2} hold.

Next suppose that $\psi$ is oscillating. We claim that $\psi$ will
eventually be captured in one of two types of bounded intervals.
Specifically, one of the following holds: \vspace{-1mm}

{\noindent \hangindent 12pt (i) For some $m\in\zline$ and all $t$
large enough, $\psi(t)\mm\in\mm((2m-1)\pi-\psi_2,(2m+1)\pi-\psi_1
\nmm)$ and moreover, \m $(\psi(t),\dot\psi(t))\in\Delta_2$.
\vspace{-1mm}

\noindent (ii) For some $m\in\zline$ and all $t$
large enough, $\psi(t)\nmm\in\nmm[(2m-1)\pi-\psi_1,(2m-1)\pi-\psi_2]$.
\vspace{-3mm}}

To prove the above claim, suppose that (ii) does not hold. Then for
any $T>0$ there must exist times $\tau_2>\tau_1>T$ such that $\dot\psi
(\tau_1)=0$, $\dot\psi(t)\not=0$ for $t\in(\tau_1,\tau_2]$ and
$\psi(\tau_2)\not\in[(2m-1)\pi-\psi_1,(2m-1)\pi-\psi_2],\ \forall\,m
\in\zline$. Without loss of generality, we may assume that $\dot\psi
(\tau_2)>0$, since for $\dot\psi(\tau_2)<0$ the argument is similar.
Then $\dot\psi(t)>0$ for all $t\in(\tau_1,\tau_2]$, whence $\ddot\psi
(\tau_1)\geq 0$ and $\psi(\tau_2)>\psi(\tau_1)$.

Since $\dot\psi(\tau_1)=0$ and $\ddot\psi(\tau_1)\geq 0$, from \rfb
{pendulum} we get $\sin\psi_1>\sin(\psi(\tau_1))$, so that $\psi(\tau
_1)\in((2m-1)\pi-\psi_1,2m\pi+\psi_1)$ for some $m\in\zline$. Therefore
$\psi(\tau_2)>(2m-1)\pi-\psi_2$ and $(\psi(\tau_1),0)$ is a positive
acceleration point. Let $\tau_3=\min\{\m t>\tau_1\,\big|\,\dot\psi(t)
=0\}$. Then $\ddot\psi(\tau_3)\leq 0$, which using \rfb{pendulum} implies
that $\sin(\psi(\tau_3))>\sin\psi_2$. Since $\psi(\tau_3)>\psi(\tau_2)$,
this implies that $\psi(\tau_3)>2m\pi+\psi_2$. Let $\psi_p$ be the
solution of \rfb{pend_max} when $(\psi_p(0),\dot\psi_p(0))=(\psi
(\tau_1),0)$. From Lemma \ref{traj_above} we have $\dot\psi_p|_{\psi
_p=\psi(t)}>\dot\psi(t)$ for each $t\in(\tau_1,\tau_3)$. This and the
fact that the first negative acceleration point $(\varphi,0)$ for
$(\psi_p,\dot\psi_p)$ is such that $\vp<(2m+1)\pi-|\psi_1|$ (see Lemma
\ref{pos_acc}) imply that $\psi(\tau_3)\in(2m\pi+\psi_2,(2m+1)\pi-
|\psi_1|)$. We now have to consider two cases:

{\bf Case (a)} \m If $\psi(\tau_3) \in(2m\pi+\psi_2,2m\pi+\psi_1]$
then, since the line joining the points $(2m\pi+\psi_2,0)$ and
$(2m\pi+\psi_1,0)$ in the phase plane is contained in $\Delta_2$,
we get that $(\psi(\tau_3),0) \in\Delta_2$. According to Lemma
\ref{why_trajenv}, $(\psi(t),\dot\psi(t))$ remains in $\Delta_2$
for all $t\geq\tau_3$, and moreover $(\psi(t),\dot\psi(t))$ cannot
reach the corner $(\vp^0,0)\in\Delta_2$, so that (i) holds.

{\bf Case (b)} \m If $\psi(\tau_3)\in(2m\pi+\psi_1,(2m+1)\pi-|\psi_1|)
$, then from \rfb{pendulum} we get that $\ddot\psi(\tau_3)<0$. In this
case, we have to do one more iteration: Denote $\tau_4=\min\{t>\tau_3
\,|\,\dot\psi(t)=0\}$, then $\dot\psi(t)<0$ for all $t\in(\tau_3,\tau
_4)$. By a reasoning similar to the one used before case (a), using
Lemmas \ref{traj_below} and \ref{neg_acc}, we get that $\psi(\tau_4)
\in((2m-1)\pi+|\psi_2|,2m\pi+\psi_1)$, so that $(\psi(\tau_4),0)\in
\Delta_2$. By the same argument as employed in case (a), this implies
that (i) holds. Thus, we have proved our claim.

First we consider the case when (ii) holds. Then actually $\psi(t)
\nmm\in\nmm ((2m-1)\pi-\psi_1,(2m-1)\pi-\psi_2)$ for some $m\in
\zline$ and all $t$ large enough. Indeed, for $t$ large enough, $\psi
(t)$ can no longer reach the endpoints of the interval in (ii),
because at the endpoints we would have $\dot\psi(t)=0$ and from
\rfb{pendulum} we see that the acceleration $\ddot\psi(t)$ would force
$\psi(t)$ to leave the interval. Now \rfb{convergence_est1} and
\rfb{final_estimate2} follow trivially.

Now we consider the case when (i) holds. We show by induction that
given any even $k\in\nline$, there exists a $\tau_k>0$ such that
$(\psi(t),\dot \psi(t))\in\Delta_k$ for all $t\geq\tau_k$. This, along
with Lemma \ref{lm:env_traj} and \ref{velocity} will imply that
\rfb{convergence_est1} and \rfb{final_estimate1} hold.

Assume that for some even $k\in\nline$ and some $\tau_k>0$, $(\psi(t),
\dot\psi(t))\in\Delta_k$ for all $t\geq\tau_k$. Let $T>\tau_k$ be such
that $\dot\psi(T)<0$. Define $\tau_{k+2}=\min\{t>T\,|\,\dot\psi(t)=0
\}$. Then $\ddot\psi(\tau_{k+2})\geq 0$ and since $(\psi(\tau_{k+2}),
\dot\psi(\tau_{k+2}))\in\Delta_k$, it is easy to see that $\psi(\tau
_{k+2})\in[\vp^k,2m\pi+\psi_1]$ (the upper bound follows from
\rfb{pendulum}). So $(\psi(\tau_{k+2}),\dot\psi(\tau_{k+2}))\in\Delta
_{k+2}$ and (from Lemma \ref{why_trajenv}) $(\psi(t),\dot\psi(t))\in
\Delta_{k+2}$ for all $t\geq\tau_{k+2}$.
\end{proof}

\begin{remark} \label{IC_infl}
The bounds for $|\psi(t_1)-\psi(t_2)|$ and $\psi(t)$ in
\rfb{convergence_est1}, \rfb{final_estimate1} and
\rfb{final_estimate2} depend only on $\alpha$, $\beta$ and $d$ and
they are independent of the initial state $(\psi(0),\dot\psi(0))$ of
\rfb{pendulum}. Therefore, even if $d$ satisfies the less restrictive
inequality $\limsup|\gamma(t)|<d$ (instead of $\|\gamma\|_{L^\infty}
<d$), Theorem \ref{main_result_sec5} continues to hold.
\end{remark}

\begin{remark}\label{lyap_fnc}
In the pendulum equation \rfb{pendulum}, $\gamma$ can be viewed as a
bounded disturbance. Often, bounds like \rfb{final_estimate1} that
characterize the asymptotic response of dynamical systems driven by
bounded disturbances are derived using Lyapunov functions. Using
Proposition \ref{similar_hayes} and Lemmas \ref{pos_acc} and
\ref{neg_acc} it can be shown that all the solutions of \rfb{pendulum}
(with the angles measured modulo $2\pi$) are eventually in a bounded
region $\Om$ of the phase plane. Then a Lyapunov function $V$ for
\rfb{pendulum} which is positive-definite on $\Om$ and for which $\dot
V$, evaluated along the solutions of \rfb{pendulum} with $\gamma=0$,
is negative-definite on $\Om$ can be constructed (see \cite[Example
4.4]{Kha:02} for a Lyapunov function that can be used when $\beta=0$).
Using $V$ a bound like \rfb{final_estimate1} can be derived for a
given $\alpha$, $\beta$ and $d$. The main problem with this approach
is that it is hard to express the bounds thus derived as simple
functions of $\alpha$, $\beta$ and $d$. This makes it difficult, not
only to state the main result of this paper concisely using them, but
also to verify the sufficient conditions in the main result.
\end{remark}

%%%%%%%%%%**********%%%%%%%%%%**********%%%%%%%%%%**********%%%%%%%%%%
\section{\secp Stability of the SG connected to the bus} \label{sec6}
% Section 6

\ \ \ In this section we derive sufficient conditions for the SG
parameters under which the system \rfb{eq:SG} is almost globally
asymptotically stable. We obtain these conditions by applying the
asymptotic bounds derived in Section \ref{sec5} for the forced
pendulum equation to the exact swing equation (ESE) in \rfb{SG_Pend}.
To this end, we first write the ESE in the standard form for forced
pendulum equations shown in \rfb{pendulum}. Recall $i_v$ from
\rfb{iv_defn} and $p=R_s/L_s$. Define $V_r,\rho$ and $P_\infty$ (all
$>0$) as follows: \vspace{-1mm}
\BEQ{Vr_rho}
   V_r \m=\m \frac{mi_f}{L_s i_v} \m,\qquad \rho \m=\m \sqrt{\frac{J}
   {mi_f i_v}} \m,\qquad P_\infty \m=\m \frac{p\o_g}{\o_g^2+p^2} \m.
\end{equation}
It will be useful to note that
$$ p\rho\lim_{s\to\infty} \int_0^s e^{-p\rho(s-\tau)} \sin(\rho\o_g
   (s-\tau))\dd\tau \m=\m p\rho \int_0^\infty e^{-p\rho\sigma} \sin
   (\rho\o_g\sigma) \dd\sigma \m=\m P_\infty \m.$$
We also introduce the constants
\BEQ{alpha_beta}
   \alpha \m=\m \frac{D_p}{\sqrt{m i_f i_v J}} \m,\qquad \beta
   \m=\m \frac{T_m-D_p\o_g}{m i_f i_v} -V_r P_\infty \m.
\end{equation}
Consider the new time variable $s=t/\rho$ and new angle variable
$\psi(s)=\eta(\rho s)$. In terms of these variables, ESE has the
following representation, equivalent to \rfb{SG_Pend}: \vspace{-1mm}
\BEQ{SG_finalform}
   \psi''(s)+\alpha\psi'(s)+\sin\psi(s) \m=\m \beta+\gamma(s) \m,
\vspace{-1mm} \end{equation}
\BEQ{gamma_defn}
   \gamma(s) \m=\m \frac{e^{-p\rho s}f(\rho s)}{i_v} + V_r P_\infty
   - V_r P(s) \m,
\vspace{-1mm} \end{equation}
\BEQ{Pd_defn}
   P(s) \m=\m p\rho \int_0^s e^{-p\rho (s-\tau)}\sin\left[\psi(s)
   -\psi(\tau)+\rho\o_g(s-\tau)\right] \dd\tau \m,
\end{equation}
where $f$ \m is the bounded function defined in \rfb{f_defn} using the
function $\o$ and the initial conditions $i_d(0),i_q(0)$ and $\delta
(0)$. In turn, $\o$ depends on $\psi$ as $\o(t)=(1/\rho)\psi'(t/\rho)
+\o_g$ (according to \rfb{smaller_K} and \rfb{psi_defn}). The global
existence of a unique solution to the system of integro-differential
equations \rfb{SG_finalform}-\rfb{Pd_defn} together with \rfb{f_defn},
for any initial conditions $\psi(0),\psi'(0),i_d(0)$ and $i_q(0)$,
follows from the global existence of unique solutions to
\rfb{SG_Pend} (see also the discussion below \rfb{SG_Pend}).

Consider a solution $(i_d,i_q,\o,\delta)$ of \rfb{eq:SG} and the
corresponding solution $\psi(s)=(3\pi/2)+\delta(\rho s)+\phi$ of
\rfb{SG_finalform}, with $f$ as in \rfb{f_defn}, $\gamma$ as in \rfb
{gamma_defn} and $P$ as in \rfb{Pd_defn}. Clearly $f$ is bounded and
$|P(s)|<1$, and both $f$ and $P$ are continuous functions. Therefore
$\gamma$ is a bounded continuous function and $\psi$ is the
corresponding solution of \rfb{SG_finalform} regarded as a forced
pendulum equation. So the bounds in Theorem \ref{main_result_sec5},
developed for solutions of forced pendulum equations, can be used to
obtain asymptotic bounds for $\psi$ in terms of any $d>0$ that
satisfies $\limsup|\gamma(s)|<d$ (see Remark \ref{IC_infl}) and
$|\beta|+d<1$. Using these asymptotic bounds, Theorem \ref{stab_cond2}
shows that under some conditions on the SG parameters
$\limsup|\gamma(s)|=0$. This implies that $(\psi, \psi')$ converges to
a limit point, using which we can conclude that $(i_d,i_q,\o,\delta)$
converges to an equilibrium point of \rfb{eq:SG}. Since the stability
conditions in Theorem \ref{stab_cond2} are independent of the initial
state of \rfb{eq:SG} we get that, whenever these conditions hold,
every solution of \rfb{eq:SG} converges to an equilibrium point.

We briefly explain the idea behind the stability conditions in
Theorem \ref{stab_cond2}. Let \vspace{-2mm}
\BEQ{Gamma_defn}
   \Gamma \m=\m (1+P_\infty)V_r \m.
\end{equation}
It is easy to see from \rfb{gamma_defn} and \rfb{Pd_defn} that
$\limsup|\gamma(s)|\leq\Gamma$. From \rfb{Pd_defn} we get that
\vspace{-4mm}
\begin{align}
  P(s) &\m=\m p\rho \int_0^s e^{-p\rho(s-\tau)}\sin\left(\int_{\tau}^s
  \o_\rho(\sigma)\dd\sigma\right)\m \dd\tau \nonumber\\[3pt]
  \implies P(s)&\m=\m p\rho \int_0^s e^{-p\rho\tau} \sin\left(\int_0^
  \tau\o_\rho(s-\sigma) \dd\sigma\right) \dd\tau \m, \label{Pd_newdefn}
\end{align}
where $\o_\rho(s)=\rho\o(\rho s)$, so that $\o_\rho(\sigma)=\psi'
(\sigma)+\rho\o_g$. We define a function $\Nscr:(0,\Gamma]\to[0,\infty
)$ as follows. Fix $d\in(0,\Gamma]$. Suppose that $\limsup|\gamma(s)|
<d$. Using the asymptotic bounds in Section \ref{sec5} and Lemma
\ref{velocity_trap} below, we derive upper and lower bounds for $\psi'
(s)$ that are valid for large $s$. Using these bounds, the expression
$\o_\rho(s)=\psi'(s)+\rho\o_g$ and \rfb{Pd_newdefn}, we derive an
upper bound $P_u^d$ and a lower bound $P_l^d$ for $P(s)$, which is
again valid for large $s$ (this step uses Lemma \ref{bound}). Finally
we define \m $\Nscr(d)=V_r\max\{P_u^d-P_\infty,P_\infty-P_l^d\}$. It
is clear from \rfb{gamma_defn} that $\limsup|\gamma(s)|\leq\Nscr(d)$.
Our stability condition is $\Nscr(d)<d$ for all $d\in(0,\Gamma]$,
from which we can conclude that $\limsup|\gamma(s)|=0$ (see Theorem
\ref{stab_cond2} for details).

\begin{lemma} \label{velocity_trap}
Suppose $\beta\in\rline$, $d>0$ and $\alpha>0$. If $|\beta|+d<1$ and
\rfb{assumption} holds, where $\psi_1,\m\psi_2$ are as in \rfb{t1-t2},
then define $\phi_1,\m\phi_2\in[-\pi/2,\pi/2]$ by \vspace{-1mm}
$$ \phi_1 \m=\m \min\left\{\frac{\pi}{2},\psi_1+\frac{4d}{\alpha^2}
   \right\} \m,\qquad \phi_2 \m=\m \max\left\{-\frac{\pi}{2},\psi_2
   -\frac{4d}{\alpha^2}\right\}$$
and let $S_n=-\sin\phi_1$ and $S_p=-\sin\phi_2$. In all other cases
let $S_n=-1$ and $S_p=1$. Define \vspace{-2mm}
\BEQ{velocity_bound}
  \o_n \m=\m \frac{S_n+\beta-d}{\alpha} \m,\qquad
  \o_p \m=\m \frac{S_p+\beta+d}{\alpha} \m.\vspace{1mm}
\end{equation}
Assume that $\gamma:[0,\infty)\rarrow\rline$ is continuous, $\limsup
|\gamma(s)|<d$ and $\psi$ is a corresponding solution of
\rfb{SG_finalform}. Then for some $T>0$ we have \vspace{-1mm}
\BEQ{velocity_limit}
  \o_n \m<\m \psi'(s) \m<\m \o_p \FORALL s\geq T \m.
  \vspace{2mm}
\end{equation}
\end{lemma}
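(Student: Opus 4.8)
The plan is to rewrite \rfb{SG_finalform} as the scalar linear equation $z'(s)+\alpha z(s)=u(s)$ with $z=\psi'$ and $u(s)=\beta+\gamma(s)-\sin\psi(s)$, and then to control $\psi'$ by controlling $u$. First I would record the elementary comparison fact already used at \rfb{JF_election}: from the variation-of-constants formula $z(s)=e^{-\alpha(s-s_0)}z(s_0)+\int_{s_0}^{s}e^{-\alpha(s-\tau)}u(\tau)\dd\tau$, letting $s\to\infty$ (the boundary term vanishes because $z(s_0)$ is a fixed finite number, $\psi$ being globally defined and of class $C^2$, cf. the discussion after \rfb{SG_Pend}), one gets $\limsup_{s\to\infty}z(s)\le\tfrac1\alpha\limsup_{s\to\infty}u(s)$ and $\liminf_{s\to\infty}z(s)\ge\tfrac1\alpha\liminf_{s\to\infty}u(s)$, both finite since $|u(s)|\le|\beta|+d''+1$ for large $s$. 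Since $\limsup|\gamma(s)|<d$ there is $d''\in(0,d)$ with $|\gamma(s)|<d''$ for all large $s$, so $\limsup_{s}u(s)\le\beta+d''-\liminf_{s}\sin\psi(s)$ and $\liminf_{s}u(s)\ge\beta-d''-\limsup_{s}\sin\psi(s)$. Hence the lemma reduces to the two sine estimates $\limsup_{s}\sin\psi(s)\le-S_n$ and $\liminf_{s}\sin\psi(s)\ge-S_p$: indeed, because $d''<d$ these give $\limsup_s u(s)<S_p+\beta+d=\alpha\o_p$ and $\liminf_s u(s)>S_n+\beta-d=\alpha\o_n$, whence $\limsup_s\psi'(s)<\o_p$ and $\liminf_s\psi'(s)>\o_n$, which yields a $T$ with \rfb{velocity_limit}.

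It remains to establish the two sine estimates. In the branch where $S_n=-1$, $S_p=1$ (the ``all other cases'' of the lemma) they are trivial, since $-1\le\sin\psi(s)\le1$. So assume $|\beta|+d<1$ and that \rfb{assumption} holds, so that $S_n=-\sin\phi_1$ and $S_p=-\sin\phi_2$. Since \rfb{SG_finalform} has the form \rfb{pendulum}, Theorem \ref{main_result_sec5} applies --- and by Remark \ref{IC_infl} the weaker hypothesis $\limsup|\gamma|<d$ suffices --- so there is a fixed integer $m$ such that for all large $s$ either \rfb{final_estimate1} or \rfb{final_estimate2} holds. From the definitions of $\phi_1,\phi_2$ and from $\sin\psi_2=\beta-d<\beta+d=\sin\psi_1$ with $\psi_1,\psi_2\in(-\pi/2,\pi/2)$ we get $-\pi/2\le\phi_2\le\psi_2<\psi_1\le\phi_1\le\pi/2$. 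Using the $2\pi$-periodicity of $\sin$, and in case \rfb{final_estimate2} the identity $\sin(\theta+(2m+1)\pi)=-\sin\theta$, I would show $\sin\phi_2\le\sin\psi(s)\le\sin\phi_1$ for all large $s$. In case \rfb{final_estimate2} this is immediate: there $\sin\psi(s)\in(\beta-d,\beta+d)=(\sin\psi_2,\sin\psi_1)$, and $\sin\psi_2\ge\sin\phi_2$, $\sin\psi_1\le\sin\phi_1$ by monotonicity of $\sin$ on $[-\pi/2,\pi/2]$. In case \rfb{final_estimate1} one has to bound $\sin\theta$ for $\theta$ in the interval $(\psi_2-4d/\alpha^2,\psi_1+4d/\alpha^2)$. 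Since in this branch $-S_n=\sin\phi_1$ and $-S_p=\sin\phi_2$, this is exactly the pair of estimates needed.

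The only genuinely delicate step is that last bound in case \rfb{final_estimate1}, which is routine but requires splitting into sub-cases according to whether $\psi_1+4d/\alpha^2$ exceeds $\pi/2$ and whether $\psi_2-4d/\alpha^2$ drops below $-\pi/2$. When $\psi_1+4d/\alpha^2>\pi/2$ (so $\phi_1=\pi/2$) the bound $\sin\theta\le1=\sin\phi_1$ is automatic, and symmetrically for the lower bound when $\phi_2=-\pi/2$; when $\psi_1+4d/\alpha^2\le\pi/2$, so that $\phi_1$ is the right endpoint of the interval, one checks that the interval cannot wrap around into a neighbouring arch of $\sin$ where it would exceed $\sin\phi_1$, which comes down to $\psi_1+\psi_2>-\pi$, and the lower bound similarly reduces to $\psi_1+\psi_2<\pi$; both hold because $\psi_1,\psi_2\in(-\pi/2,\pi/2)$. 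This is the only place where the specific clipped definitions of $\phi_1$ and $\phi_2$, together with the standing hypotheses that place us in the regime of Theorem \ref{main_result_sec5}, are actually used; everything else is the scalar comparison estimate of the first paragraph.
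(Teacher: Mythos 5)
Your proposal is correct and takes essentially the same route as the paper: establish a two-sided bound on the forcing $-\sin\psi(s)+\beta+\gamma(s)$ for large $s$ (invoking Theorem~\ref{main_result_sec5} and Remark~\ref{IC_infl} in the non-trivial branch), then feed that bound through the stable first-order linear dynamics for $\psi'$. Your write-up is somewhat more explicit about where the strictness in \rfb{velocity_limit} comes from --- you extract a $d''<d$ from $\limsup|\gamma|<d$ so that the $\limsup/\liminf$ comparison yields a genuine gap --- and about the elementary trigonometric case-splitting, both of which the paper leaves as ``it can be easily verified''.
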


\begin{proof}
We claim that there exists $T_0>0$ such that
\BEQ{forcing_bound}
  S_n+\beta- d \m<\m -\sin\psi(s)+\beta+\gamma(s) \m<\m S_p+\beta
  + d \FORALL s\geq T_0 \m.
\end{equation}
Let $\tau>0$ be such that $|\gamma(s)|<d$ for all $s>\tau$. When
$|\beta|+d\geq1$ or when \rfb{assumption} is false, then it is easy
to see that \rfb{forcing_bound} holds with $T_0=\tau$. When $|\beta|
+d<1$ and \rfb{assumption} holds, then it follows by
applying Theorem \ref{main_result_sec5} and Remark \ref{IC_infl} to
\rfb{SG_finalform} that there exists $T_1>0$ such that for all $s
\geq T_1$ either \rfb{final_estimate1} or \rfb{final_estimate2} holds
(with $s$ in place of $t$). This implies that \rfb{forcing_bound}
holds with $T_0=\max\{\tau,T_1\}$. By regarding \rfb{SG_finalform}
as a stable first order linear dynamical system with state $\psi'$ and
external forcing $-\sin\psi(s)+\beta+\gamma(s)$, it can be easily
verified using \rfb{forcing_bound} that \rfb{velocity_limit} holds.
\end{proof}

The next lemma derives an upper bound and a lower bound for $P(s)$,
given an upper and lower bound for $\o_\rho$.

\begin{lemma} \label{bound}
Suppose that there exist constants $\o_{\max}>0$ and $\o_{\min}>0$
such that
\BEQ{omega_rel}
  \o_{\min}\leq\o_{\max}\leq2\o_{\min} \qquad and\qquad \o_{\min}
  \m\leq\m\o_{\rho}(\sigma)\m\leq\m\o_{\max}
\end{equation}
for all $\sigma\geq0$. Let $T_{\max}=2\pi/\o_{\max}$ and $T_{\min}=
2\pi/\o_{\min}$. Define the function $g$ on the interval $[0,T_{\max}]$
and the function $h$ on the interval $[0,T_{\min}]$ as follows\m:
\BEQ{g_defn}
  g(\tau) \m=\m \left\{
   \begin{array}{l l}
      \vspace{1mm} \sin(\o_{\max}\tau) \qquad& if \qquad 0\leq\tau<
      \frac{\pi} {2\o_{\max}}\\
      \vspace{1mm} 1 \qquad& if \qquad \frac{\pi}{2\o_{\max}}\leq
      \tau<\frac{\pi} {2\o_{\min}}\\
      \vspace{1mm} \sin(\o_{\min}\tau) \qquad& if \qquad\frac{\pi}
      {2\o_{\min}}\leq\tau<\frac{3\pi}{\o_{\min}+\o_{\max}}\\
      \vspace{1mm} \sin(\o_{\max}\tau)\qquad& if \qquad\frac{3\pi}
      {\o_{\min}+\o_{\max}}\leq\tau\leq\frac{2\pi}{\o_{\max}}
  \end{array} \right. ,
\end{equation}
\BEQ{h_defn}
  h(\tau) \m=\m \left\{
   \begin{array}{l l}
      \vspace{1mm} \sin(\o_{\min}\tau) \qquad& if \qquad 0\leq\tau<
      \frac{\pi}{\o_{\min}+\o_{\max}}\\
      \vspace{1mm} \sin(\o_{\max}\tau) \qquad& if \qquad \frac{\pi}
      {\o_{\min}+\o_{\max}} \leq\tau<\frac{3\pi} {2\o_{\max}}\\
      \vspace{1mm} -1 \qquad& if \qquad\frac{3\pi}{2\o_{\max}}\leq
      \tau<\frac{3\pi}{2\o_{\min}}\\
      \vspace{1mm} \sin(\o_{\min}\tau) \qquad& if \qquad \frac{3\pi}
      {2\o_{\min}}\leq\tau\leq\frac{2\pi}{\o_{\min}}
  \end{array} \right. .
\end{equation}
Define $T=T_{\max}$ if $\int_0^{T_{\min}} e^{-p\rho\tau}h(\tau)\dd
\tau<0$ and $T=T_{\min}$ otherwise. Then for any $s\geq 0$, $P(s)$
from \rfb{Pd_newdefn} satisfies the following bounds\m:
\BEQ{UB_eqn}
  P(s) \m\leq\m \frac{p\rho}{1-e^{-p\rho T_{\max}}} \int_0^{T_
  {\max}} e^{-p\rho\tau} g(\tau)\dd\tau + e^{-p\rho(s-T_{\min})} \m,
\end{equation}
\BEQ{LB_eqn}
  P(s) \m\geq\m \frac{p\rho}{1-e^{-p\rho T}} \int_0^{T_{\min}}
  e^{-p\rho\tau}h(\tau)\dd\tau - e^{-p\rho(s-T_{\min})} \m.
\end{equation}
\end{lemma}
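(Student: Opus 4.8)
\emph{Proof strategy.} The plan is to write $P(s)=p\rho\int_0^s e^{-p\rho\tau}\sin\Theta(\tau)\,\dd\tau$ with $\Theta(\tau)=\int_0^\tau\o_\rho(s-\sigma)\,\dd\sigma$, and to exploit that $\Theta$ is strictly increasing with $\Theta(0)=0$ and $\o_{\min}\leq\Theta'\leq\o_{\max}$, so $\o_{\min}\tau\leq\Theta(\tau)\leq\o_{\max}\tau$ for all $\tau\geq0$. First I would cut $[0,\infty)$ at the instants $\tau_k$ where $\Theta(\tau_k)=2k\pi$ (these exist and are unique); by the derivative bounds $\tau_0=0$ and $L_k:=\tau_k-\tau_{k-1}\in[T_{\max},T_{\min}]$. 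On the $k$-th ``period'' put $\theta_k(u)=\Theta(\tau_{k-1}+u)-2(k-1)\pi$ for $u\in[0,L_k]$, so that $\theta_k(0)=0$, $\theta_k(L_k)=2\pi$, $\o_{\min}\leq\theta_k'\leq\o_{\max}$, hence $\o_{\min}u\leq\theta_k(u)\leq\o_{\max}u$, and $\sin\Theta(\tau_{k-1}+u)=\sin\theta_k(u)$.

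The heart of the argument is the pair of pointwise envelope inequalities
$$ h(u)\ \leq\ \sin\theta_k(u)\ \leq\ g(u) \qquad\text{for all }u\in[0,T_{\max}], $$
together with $\sin\theta_k(u)\leq0$ for $u\in[T_{\max},L_k]$ and $h(u)\leq0$ for $u\in[T_{\max},T_{\min}]$. The last two use the hypothesis $2\o_{\min}\geq\o_{\max}$, which forces $\theta_k(T_{\max})\geq 2\pi\o_{\min}/\o_{\max}\geq\pi$ and $\theta_k(u)\leq\theta_k(L_k)=2\pi$. I would prove the envelope inequalities by a case analysis over the four pieces in \rfb{g_defn} and \rfb{h_defn}, in each case squeezing $\theta_k(u)$ between $\o_{\min}u$ and $\o_{\max}u$ and invoking the identity $\sin a=\sin(\pi-a)$; the breakpoints $\pi/(\o_{\min}+\o_{\max})$, $3\pi/(\o_{\min}+\o_{\max})$, $3\pi/(2\o_{\max})$, $3\pi/(2\o_{\min})$ are exactly the places where the relevant comparison (of the form $\theta_k(u)+\o_{\min}u\leq3\pi$, or $(\o_{\min}+\o_{\max})u\leq\pi$, etc.) becomes an equality. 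Along the way I would also record that $g$ on $[0,T_{\max}]$ and $h$ on $[0,T_{\min}]$ each change sign exactly once, from $+$ to $-$ (again using $2\o_{\min}\geq\o_{\max}$); hence $v\mapsto\int_0^v e^{-p\rho u}g(u)\,\dd u$ and $v\mapsto\int_0^v e^{-p\rho u}h(u)\,\dd u$ are unimodal, so in particular $\int_0^v e^{-p\rho u}h(u)\,\dd u\geq\min\{0,\ Y'\}$ for every $v\in[0,T_{\min}]$, where $Y':=\int_0^{T_{\min}}e^{-p\rho u}h(u)\,\dd u$, and $X':=\int_0^{T_{\max}}e^{-p\rho u}g(u)\,\dd u>0$ (e.g.\ because $g\geq\sin(\o_{\max}\,\cdot\,)$ on $[0,T_{\max}]$ and the weighted integral of $\sin(\o_{\max}\,\cdot\,)$ over one period is positive).

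Integrating the envelopes over $[0,L_k]$ and using $g\leq0$, $h\leq0$ past $T_{\max}$ gives $Y'\leq\int_0^{L_k}e^{-p\rho u}\sin\theta_k(u)\,\dd u\leq X'$. I would then split $P(s)=p\rho\sum_{k:\,\tau_k\leq s}e^{-p\rho\tau_{k-1}}\int_0^{L_k}e^{-p\rho u}\sin\theta_k(u)\,\dd u+p\rho\int_{\tau_N}^{s}e^{-p\rho\tau}\sin\Theta(\tau)\,\dd\tau$ ($\tau_N$ the last $\tau_k$ in $[0,s]$, $\tau_N=0$ if $s<\tau_1$), bound the full ``period'' terms as above, use the two-sided estimate $(k-1)T_{\max}\leq\tau_{k-1}\leq(k-1)T_{\min}$ with a geometric series, and bound the trailing term crudely by $|\sin\Theta|\leq1$, so that $p\rho\int_{\tau_N}^s e^{-p\rho\tau}\,\dd\tau<e^{-p\rho\tau_N}\leq e^{-p\rho(s-T_{\min})}$ (since $\tau_{N+1}>s$ and $\tau_{N+1}-\tau_N\leq T_{\min}$). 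For the upper bound, $X'>0$ lets me sum $\sum e^{-p\rho\tau_{k-1}}\leq\sum e^{-p\rho(k-1)T_{\max}}=(1-e^{-p\rho T_{\max}})^{-1}$, which yields \rfb{UB_eqn}. For the lower bound the sign of $Y'$ dictates which estimate on $\tau_{k-1}$ to use: if $Y'\geq0$ I sum with $\tau_{k-1}\leq(k-1)T_{\min}$ and absorb the partial last period using $\int_0^v e^{-p\rho u}h\geq0$ from the unimodality above, getting the $T=T_{\min}$ version; if $Y'<0$ I sum with $\tau_{k-1}\geq(k-1)T_{\max}$, getting the $T=T_{\max}$ version. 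That dichotomy is exactly the one in the statement, and in both cases the leftover is at most $e^{-p\rho(s-T_{\min})}$ after using $|p\rho Y'|\leq1-e^{-p\rho T_{\min}}$ and $NT_{\min}>s-T_{\min}$; this produces \rfb{LB_eqn}.

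I expect the main obstacle to be the pointwise envelope inequalities: each individual case is elementary, but there are many sub-cases (according to whether $\theta_k(u)$ falls in $[\pi/2,\pi]$, $(\pi,3\pi/2]$ or $(3\pi/2,2\pi)$, etc.), and every one of them must genuinely use \emph{both} sides of $\o_{\min}u\leq\theta_k(u)\leq\o_{\max}u$ and then invoke $\sin a=\sin(\pi-a)$ at precisely the right breakpoint. A secondary delicate point is the sign bookkeeping in the geometric summation and the trailing-term estimate, where the $Y'\gtrless0$ split and the two-sided bound $(k-1)T_{\max}\leq\tau_{k-1}\leq(k-1)T_{\min}$ have to be applied in the correct direction.
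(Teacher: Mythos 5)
Your proposal follows the paper's own strategy step for step: split the time axis at the instants $\tau_k$ where the accumulated angle reaches $2k\pi$, compare $\sin\theta_k$ to the piecewise envelopes $g$ and $h$ whose breakpoints come from the squeeze $\o_{\min}u\leq\theta_k(u)\leq\o_{\max}u$, sum the per-period contributions by a geometric series using $(k-1)T_{\max}\leq\tau_{k-1}\leq(k-1)T_{\min}$, and treat the partial last period together with the sign dichotomy on $Y'=\int_0^{T_{\min}}e^{-p\rho u}h(u)\,\dd u$. The upper-bound chain is correct as you have it.

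There is, however, a genuine gap in the lower-bound chain. You state the two-sided envelope $h(u)\leq\sin\theta_k(u)\leq g(u)$ only on $[0,T_{\max}]$, supplemented by the sign facts $\sin\theta_k\leq0$ on $[T_{\max},L_k]$ and $h\leq0$ on $[T_{\max},T_{\min}]$, and then deduce $Y'\leq\int_0^{L_k}e^{-p\rho u}\sin\theta_k(u)\,\dd u$. That deduction does not follow from what you have stated: writing
\begin{align*}
\int_0^{L_k}e^{-p\rho u}\sin\theta_k(u)\,\dd u - Y' &= \int_0^{T_{\max}}e^{-p\rho u}\bigl(\sin\theta_k(u)-h(u)\bigr)\dd u \\
&\quad + \int_{T_{\max}}^{L_k}e^{-p\rho u}\sin\theta_k(u)\,\dd u - \int_{T_{\max}}^{T_{\min}}e^{-p\rho u}h(u)\,\dd u\,,
\end{align*}
the first term is nonnegative, but the second and third are both nonpositive and nothing you listed controls which of them dominates. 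What is actually needed, and what the paper itself proves, is the pointwise inequality $h(u)\leq\sin\theta_k(u)$ on the \emph{whole} period $[0,L_k]$; combined with $h\leq0$ on $[L_k,T_{\min}]$ this gives $\int_0^{L_k}e^{-p\rho u}\sin\theta_k\,\dd u\geq\int_0^{L_k}e^{-p\rho u}h\,\dd u\geq Y'$. The extension of the lower envelope from $[0,T_{\max}]$ to $[T_{\max},L_k]$ is true and follows from the same kind of case analysis you sketch: where $h\equiv -1$ it is trivial, and on $[3\pi/(2\o_{\min}),L_k]$ both $\o_{\min}u$ and $\theta_k(u)$ lie in $[3\pi/2,2\pi]$ where $\sin$ is increasing and $\o_{\min}u\leq\theta_k(u)$, so $\sin(\o_{\min}u)\leq\sin\theta_k(u)$. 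But this additional piece of the envelope must be stated and proved; the sign observations you give do not substitute for it. Once that is corrected, the remaining bookkeeping (the $Y'\gtrless0$ dichotomy, the choice of $T$, and absorbing the geometric-series truncation and trailing term into $e^{-p\rho(s-T_{\min})}$) matches the paper's proof.
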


\begin{proof}
Using the assumption $0<\o_{\min}\leq\o_{\max}\leq2\o_{\min}$, it is
easy to verify that
$$ \frac{\pi}{2\o_{\max}}\leq\frac{\pi}{2\o_{\min}}\leq\frac{3\pi}
   {\o_{\min}+\o_{\max}}\leq\frac{2\pi}{\o_{\max}}\m, \hspace{6mm}
   \frac{\pi}{\o_{\min}+\o_{\max}}<\frac{3\pi}{2\o_{\max}}\leq
   \frac{3\pi}{2\o_{\min}}<\frac{2\pi}{\o_{\min}} \m.$$
This means that $g$ and $h$ in \rfb{g_defn} and \rfb{h_defn} are
defined precisely on the intervals $[0,T_{\max}]$ and $[0,T_{\min}]$.
Given $s\geq0$, fix $\m0=\tau_0<\tau_1<\ldots\tau_n\leq s$ such that
$$ \int_0^{\tau_k}\o_\rho(s-\sigma)\dd\sigma \m=\m 2k\pi \FORALL k\in
   \{1,2,\ldots n\} \quad {\rm and} \quad  \int_{\tau_n}^s\o_\rho(s-
   \sigma)\dd\sigma<2\pi \m.$$
From \rfb{omega_rel} we get that for each $0\leq k\leq n$ and all
$\tau_k\leq\tau\leq s$,
\BEQ{mean_value}
  \m\qquad\ \o_{\min}(\tau-\tau_k)\leq\int_{\tau_k}^\tau\o_\rho(s-
  \sigma)\dd\sigma \m\leq\m \o_{\max}(\tau-\tau_k) \m.
\end{equation}

\m\vspace{-8mm} \vbox{
%%%%%%%%%%**********%%%%%%%%%%**********%%%%%%%%%%**********%%%%%%%%%%
\begin{center}
\includegraphics[scale=0.73]{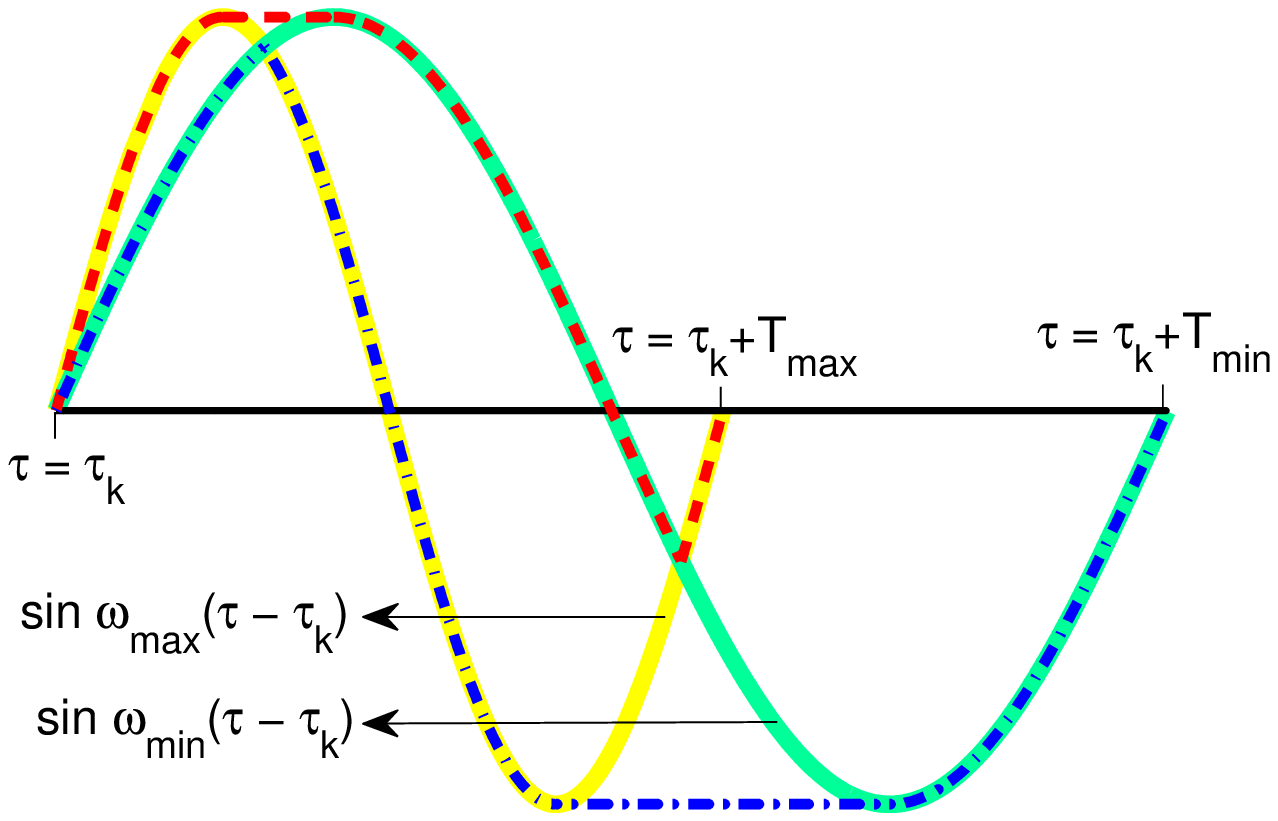} \vspace{-10mm}
\end{center}
\centerline{ \parbox{5.3in}{\vspace{2mm}
Figure 5. The function $\sin(\o_{\max}(\tau-\tau_k))$ on the interval
$[\tau_k,\tau_k+T_{\max}]$ is plotted in yellow, while $\sin(\o_{\min}
(\tau-\tau_k))$ on the interval $[\tau_k,\tau_k+T_{\min}]$ is plotted
in green. Here $T_{\max}=2\pi/\o_{\max}$ and $T_{\min}=2\pi/\o_{\min}
$. The dashed line in red is the function $g_k$ used in the proof
of Lemma \ref{bound} to obtain an upper bound for $P(s)$. The dash-dot
line in blue is the function $h_k$ used in the same proof to obtain a
lower bound for $P(s)$.\vspace{3mm}}}}
%%%%%%%%%%**********%%%%%%%%%%**********%%%%%%%%%%**********%%%%%%%%%%

\noindent
For each $k<n$, by letting $\tau=\tau_{k+1}$ in \rfb{mean_value} it
follows from the second inequality that $\tau_{k+1}-\tau_k \geq T_
{\max}$ and from the first inequality that $\tau_{k+1}-\tau_k \leq
T_{\min}$. Therefore \vspace{-1mm}
\BEQ{new_step}
   kT_{\max} \m\leq\m \tau_k \m\leq\m kT_{\min} \FORALL
   k\in\{1,2,\ldots n\}\m.
\end{equation}
From \rfb{Pd_newdefn} we have \vspace{-3mm}
\begin{align}
  P(s) \m=\m & p\rho \sum_{k=1}^n \int_{\tau_{k-1}}^{\tau_k} e^{-p
  \rho\tau} \sin\left(\int_{\tau_{k-1}}^{\tau} \o_\rho(s-\sigma)\dd
  \sigma\right) \dd\tau \m, \nonumber\\[3pt] &\qquad + p\rho\int
  _{\tau_n}^s e^{-p\rho\tau}\sin\left(\int_{\tau_n}^{\tau} \o_\rho
  (s-\sigma)\dd\sigma\right)\dd\tau \m.\label{Pd_as_sum}
\end{align} \vspace{-4mm}

For each $k\in\{0,1,\ldots n\}$, define the function $g_k$ on the
interval $[\tau_k,\tau_k+T_{\max}]$ by $g_k(\tau)=g(\tau-\tau_k)$
(see Figure 5). Using $\o_{\min}\leq\o_{\max}\leq2\o_{\min}$ and
\rfb{mean_value} it can be verified that for each $k\in\{0,1,\ldots
n\}$ and all $\tau_k\leq\tau\leq\min\{\tau_k+T_{\max},s\}$,
\BEQ{below1_ub}
  \sin\left(\int_{\tau_k}^{\tau}\o_\rho(s-\sigma)\dd\sigma\right)\dd
  \tau \m\leq\m g_k(\tau)
\end{equation}
and when $\min\{\tau_k+T_{\max},s\}<\tau\leq\min\{\tau_{k+1},s\}$
(if $k=n$, then let $\tau_{k+1}=s$)
\BEQ{below2_ub}
   \sin\left(\int_{\tau_k}^{\tau}\o_\rho(s-\sigma)\dd\sigma\right)\dd
   \tau \m\leq\m 0 \m.
\end{equation}
Using \rfb{below1_ub} and \rfb{below2_ub}, we obtain from
\rfb{Pd_as_sum} that
\begin{align*}
   P(s) &\m\leq\m p\rho\sum_{k=1}^n\int_{\tau_{k-1}}^{\tau_{k-1}+
   T_{\max}} e^{-p\rho\tau}g_{k-1}(\tau)\dd\tau + p\rho\int_{\tau_n}
   ^{\min\{s,\tau_n+T_{\max}\}} e^{-p\rho\tau}g_n(\tau)\dd\tau \\
   &\m=\m p\rho \sum_{k=1}^n e^{-p\rho\tau_{k-1}} \int_0^{T_{\max}}
   e^{-p\rho\tau} g(\tau)\dd\tau + p\rho e^{-p\rho\tau_n}\int_0^{\min
   \{s-\tau_n,T_{\max}\}} e^{-p\rho\tau} g(\tau) \dd\tau \m.
\end{align*}
By letting $\tau=s$ and $k=n$ in \rfb{mean_value}, we get that
$s-T_{\min} <\tau_n$. This, and the inequality $|g(\tau)|\leq1$ for
all $\tau\in[0,T_ {\max}]$, means that the second term on the right
side of the above expression can be bounded in absolute value by
$e^{-p\rho(s-T_{\min})}$.  From the above expression, using the easily
verifiable fact $\int_0^ {T_{\max}}e^{-p\rho\tau} g(\tau)\dd\tau>0$
and the inequalities in \rfb{new_step}, the upper bound in
\rfb{UB_eqn} follows.

For each $k\in\{0,1,\ldots n\}$, define the function $h_k$ on the
interval $[\tau_k,\tau_k+T_{\min}]$ by $h_k(\tau)=h(\tau-\tau_k)$ (see
Figure 5). Using $\o_{\min}\leq\o_{\max}\leq2\o_{\min}$ and
\rfb{mean_value} it can be verified that for each $k\in\{0,1,\ldots
n\}$ and all $\tau_k \leq\tau\leq\min\{\tau_{k+1},s\}$ (if $k=n$,
then let $\tau_{k+1}=s$), \vspace{-1mm}
\BEQ{below1_lb}
  \sin\left(\int_{\tau_k}^{\tau}\o_\rho(s-\sigma)\dd\sigma\right)\dd
  \tau \m\geq\m h_k(\tau)
\end{equation}
and when $\min\{\tau_{k+1},s\}<\tau\leq\min\{\tau_k+T_{\min},s\}$,
$h_k(\tau)\leq0$. Using this and \rfb{below1_lb}, we obtain from
\rfb{Pd_as_sum} that
\begin{align*}
   P(s) &\m\geq\m p\rho\sum_{k=1}^n\int_{\tau_{k-1}}^{\tau_{k-1}+
   T_{\min}} e^{-p\rho\tau} h_{k-1}(\tau)\dd\tau + p\rho\int_{\tau_n}
   ^s e^{-p\rho\tau} h_n(\tau) \\
   &\m=\m p\rho \sum_{k=1}^n e^{-p\rho\tau_{k-1}} \int_0^{T_
  {\min}} e^{-p\rho\tau}h(\tau)\dd\tau + p\rho e^{-p\rho\tau_n}
  \int_0^{s-\tau_n} e^{-p\rho\tau}h(\tau) \dd\tau \m.
\end{align*}
Using $s-T_{\min}<\tau_n$ (shown earlier) and the inequality
$|h(\tau)| \leq 1$ for all $\tau\in[0,T_{\min}]$, it is easy to see
that the second term on the right side of the above expression can be
bounded in absolute value by $e^{-p\rho(s-T_{\min})}$. From the above
expression, using the inequalities in \rfb{new_step}, the lower bound
in \rfb{LB_eqn} follows.
\end{proof}

%The inequalities \rfb{UB_eqn} and \rfb{LB_eqn} are strict under the
%hypothesis of Lemma \ref{bound}, whenever $\o_{\min}<\o_{\max}$ and
%$s\geq T_{\max}$. The function $\int_0^{\tau}\o_\rho(s-\sigma)\dd
%\sigma$ is strictly increasing and so the function $\sin(\int_0^{\tau}
%\o_\rho(s-\sigma)\dd\sigma)$ cannot remain constant over any non-trivial
%time interval. This means that the inequalities \rfb{below1_ub} and
%\rfb{below1_lb} must be strict at all but one point in the interval
%$[\tau_k+\pi/(2\o_{\max}),\tau_k+\pi/(2\o_{\min})]$ and $[\tau_k+3\pi/
%(2\o_{\max}),\tau_k+3\pi/(2\o_{\min}]$, respectively, for all $k\in\{0,
%1,\ldots n-1\}$. Therefore \rfb{UB_eqn} and \rfb{LB_eqn} must be
%strict. So the bounds in the above lemma are not optimal.

The next theorem is the main result of this paper. It presents
checkable conditions for the almost global asymptotic stability of the
SG model \rfb{eq:SG}. Recall the notations $V_r$ and $P_\infty$ from
\rfb{Vr_rho}, $\Gamma$ from \rfb{Gamma_defn} and $p=R_s/L_s$. As
discussed earlier, the conditions are specified in terms of a
nonlinear map $\Nscr:(0,\Gamma]\to[0,\infty)$. We show that if the
graph of this map is below the graph of $F(x)=x$, then the SG model in
\rfb{eq:SG} is almost globally asymptotically stable.

\begin{theorem} \label{stab_cond2}
Consider the SG model \rfb{eq:SG}. For each $d\in(0,\Gamma]$ let
$\o_{\max}^d=\o_p+\rho\o_g$ and \m $\o_{\min}^d=\o_n+\rho\o_g$, where
$\o_p$ and $\o_n$ are obtained from \rfb{velocity_bound} using
$\alpha$ and $\beta$ given in \rfb{alpha_beta} and $\rho$ is given
by \rfb{Vr_rho}. Then $\o_{\min}^d<\o_{\max}^d$ for all $d$. If
\m$\o_{\max}^d\leq2\o_{\min}^d$, then recall the functions $g$ and
$h$ from \rfb{g_defn} and \rfb{h_defn}, where we take $\o_{\max}=
\o_{\max}^d$ and $\o_{\min}=\o_{\min}^d$ so that $T_{\max}=2\pi/
\o_{\max}^d$ and $T_ {\min}=2\pi/\o_{\min}^d$, and define the
numbers \vspace{-1mm}
$$  P_u^d \m=\m \frac{p\rho}{1-e^{-p\rho T_{\max}}} \int_0^{T_
    {\max}} e^{-p\rho\tau} g(\tau)\dd\tau \m, \qquad P_l^d \m=
    \m \frac{p\rho}{1-e^{-p\rho T}} \int_0^{T_{\min}} e^{-p\rho
    \tau} h(\tau)\dd\tau \m, \vspace{-1mm}$$
where $T=T_{\max}$ if $\int_0^{T_{\min}} e^{-p\rho\tau}h(\tau)\dd
\tau<0$ and $T=T_{\min}$ otherwise. If \m $\o_{\max}^d>2\o_{\min}^d$, then
let $P_u^d=1$ and $P_l^d=0$. Define the map $\Nscr:(0,
\Gamma]\to\rline$ by \vspace{-1mm}
\BEQ{Defn_Phi}
   \Nscr(d) \m=\m V_r\max\{P_u^d-P_\infty, P_\infty-P_l^d\} \m.
   \vspace{-1mm}
\end{equation}
If $\Nscr(d)<d$ and $\o_{\max}^d\leq2\o_{\min}^d$ for each $d\in(0,
\Gamma]$, then every trajectory of the SG model \rfb{eq:SG} converges
to an equilibrium point. In addition, if all the equilibrium points of
\rfb{eq:SG} are hyperbolic, then \rfb{eq:SG} is almost globally
asymptotically stable.
\end{theorem}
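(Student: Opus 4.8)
The plan is to use the equivalence, set up in Sections~\ref{sec4} and~\ref{sec6}, between the SG model \rfb{eq:SG} and the forced pendulum equation \rfb{SG_finalform} with forcing $\beta+\gamma(s)$ (where $\gamma$ is given by \rfb{gamma_defn}--\rfb{Pd_defn}), and to run a bootstrap on the quantity $M:=\limsup_{s\to\infty}|\gamma(s)|$ to show that $M=0$. First I would fix a trajectory and record two preliminary facts. Since $\alpha>0$ and the right side of \rfb{SG_finalform} is bounded ($f$ is bounded and $|P(s)|<1$), the linear first order ODE for $\psi'$ shows that $\psi'$, hence $\omega_\rho(s)=\psi'(s)+\rho\omega_g$, is bounded, say $|\omega_\rho|\le C$. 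Then in \rfb{Pd_newdefn} the phase $\int_0^\tau\omega_\rho(s-\sigma)\dd\sigma$ has modulus at most $C\tau$, so its sine is bounded by $\sin(C\tau)<1$ on $(0,\pi/(2C))$; splitting the integral at $\pi/(2C)$ yields a constant $\theta<1$ with $|P(s)|\le\theta$ for all $s$, whence, using \rfb{gamma_defn} and $e^{-p\rho s}f(\rho s)\to0$, $M=V_r\limsup|P_\infty-P(s)|\le V_r(P_\infty+\theta)<V_r(1+P_\infty)=\Gamma$.

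The heart of the proof is the bootstrap: I claim that whenever $d\in(0,\Gamma]$ satisfies $M<d$, then $M\le\Nscr(d)$. Since $M<d$, Lemma~\ref{velocity_trap} applied to \rfb{SG_finalform} gives $T>0$ with $\omega_n<\psi'(s)<\omega_p$, hence $\omega_{\min}^d<\omega_\rho(s)<\omega_{\max}^d$, for all $s\ge T$; by the hypothesis $\omega_{\max}^d\le2\omega_{\min}^d$, together with $\omega_{\min}^d<\omega_{\max}^d$, we also get $\omega_{\min}^d>0$. Lemma~\ref{bound} then applies, modulo the point that the bound on $\omega_\rho$ holds only for arguments $\ge T$: this is handled by writing, for $s\ge T$, $P(s)$ as $p\rho\int_0^{s-T}(\cdots)$, to which the estimates inside the proof of Lemma~\ref{bound} apply verbatim (they use only the bound on $\omega_\rho$ over the window $[s-\tau,s]$), plus $p\rho\int_{s-T}^s(\cdots)$, which is $O(e^{-p\rho(s-T)})$. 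Letting $s\to\infty$ gives $\limsup_sP(s)\le P_u^d$ and $\liminf_sP(s)\ge P_l^d$, and substituting into \rfb{gamma_defn} gives $M\le V_r\max\{P_\infty-P_l^d,\,P_u^d-P_\infty\}=\Nscr(d)$, proving the claim. Now suppose $M>0$, so $M\in(0,\Gamma)$; the claim gives $M\le\Nscr(d)$ for every $d\in(M,\Gamma]$, and letting $d\downarrow M$, using the continuity of $\Nscr$ on $(0,\Gamma]$ (clear from inspection of its defining formulas and of \rfb{g_defn}--\rfb{h_defn}), yields $M\le\Nscr(M)$, contradicting $\Nscr(M)<M$. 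Hence $M=0$, i.e.\ $\gamma(s)\to0$.

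Once $\gamma(s)\to0$ I would close the argument as follows. As $d\downarrow0$ the bounds $\omega_n,\omega_p$ collapse to $0$ (so that $\Nscr(d)\to0$) only when $|\beta|<1$ and \rfb{assumption} holds for all small $d$; otherwise $\Nscr$ would stay bounded away from $0$ near $0$ and the hypothesis $\Nscr(d)<d$ would fail for small $d$. So the hypothesis forces $|\beta|<1$ and the validity of \rfb{assumption} for all small $d>0$. For each such $d$, since $\limsup|\gamma|=0<d$, Remark~\ref{IC_infl} and Theorem~\ref{main_result_sec5} confine $\psi(s)$, for large $s$, to one of the intervals in \rfb{final_estimate1} or \rfb{final_estimate2} with a fixed integer $m$; letting $d\downarrow0$ (so $\psi_1,\psi_2\to\psi_0$ with $\sin\psi_0=\beta$) shows $\psi(s)\to\psi^\ast$ for some $\psi^\ast$ with $\sin\psi^\ast=\beta$. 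Then $\psi'\in L^1$ and $\psi'$ is uniformly continuous (its derivative is bounded), so Barb\u alat's lemma gives $\psi'(s)\to0$ and then $\psi''(s)\to0$. Translating back via $s=t/\rho$ and $\psi=3\pi/2+\delta+\phi$ gives $\omega(t)\to\omega_g$ and $\delta(t)\to\delta^e:=\psi^\ast-3\pi/2-\phi$, and $\cos(\delta^e+\phi)=-\sin\psi^\ast=-\beta=\L$, so $\delta^e$ solves \rfb{fsquared2}; finally the exponentially stable $(i_d,i_q)$-subsystem \rfb{eq:currents}, driven by the convergent signals $\omega,\delta$, forces $(i_d,i_q)$ to the value in \rfb{fsquared1}. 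Hence every trajectory of \rfb{eq:SG} converges to an equilibrium point, and the last assertion is immediate from Lemma~\ref{stable_manifold}.

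The main obstacle I anticipate is the bootstrap step: applying Lemma~\ref{bound} when the velocity bound of Lemma~\ref{velocity_trap} is only available eventually (the tail truncation), and then justifying the limit $d\downarrow M$ in the fixed-point inequality $M\le\Nscr(d)$, which rests on continuity of the piecewise-defined map $\Nscr$. A secondary delicate point is the endgame, where one must extract from ``$\Nscr(d)<d$ for all $d$'' the facts $|\beta|<1$ and ``\rfb{assumption} holds for small $d$'' needed to invoke Theorem~\ref{main_result_sec5} and to rule out a persistently rotating, non-convergent solution.
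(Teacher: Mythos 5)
Your proposal follows essentially the same route as the paper: the bootstrap on $M=\limsup_{s\to\infty}|\gamma(s)|$ via Lemma~\ref{velocity_trap}, the truncation trick needed to apply Lemma~\ref{bound} when the velocity bound is only eventual, the limit $d\downarrow M$ against the map $\Nscr$, and the Barb\u alat-lemma endgame translating $\gamma\to 0$ into convergence of $(i_d,i_q,\o,\delta)$, after which Lemma~\ref{stable_manifold} finishes. Two tiny imprecisions that do not affect validity: $\Nscr$ is in general only \emph{right}-continuous (because the case split in \rfb{velocity_bound} uses a non-strict inequality), which is exactly what the limit $d\downarrow M$ requires and what the paper invokes; and the Barb\u alat step rests on convergence of $\int_0^t\psi'(s)\,\dd s=\psi(t)-\psi(0)$ (which follows from $\psi\to\psi^\ast$), not on $\psi'\in L^1$.
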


\begin{proof}
Throughout this proof we assume that $\Nscr(d)<d$ and $\o_{\max}^d\leq2
\o_{\min}^d$ for each $d\in(0,\Gamma]$ and that $\alpha$ and $\beta$ are
as in the theorem. First we claim that for any solution $(i_d,i_q,\omega,
\delta)$ of \rfb{eq:SG}, if the corresponding solution $\psi$ of
\rfb{SG_finalform} is such that $\gamma$ in \rfb{gamma_defn} satisfies
$\limsup|\gamma(s)|=0$, then $(i_d,i_q,\omega,\delta)$ converges to an
equilibrium point. Indeed, for each small $d$, $\Nscr(d)<d$ implies
that $|P_u^d-P_l^d|$ and hence (using the definitions of $g$ and $h$
and the fact that $\o_{\min}^d<\o_{\max}^d\leq(1+|\beta|+d)/\alpha+\rho
\o_g$) $\o_{\max}^d-\o_{\min}^d$ are both small, proportional to $d$.
This, and the observation using \rfb{velocity_bound} that if
\rfb{assumption} does not hold for a $d$, then  $\o_{\max}^d-
\o_{\min}^d>2/\alpha$, implies that \rfb{assumption} holds for all
sufficiently small $d$. Therefore, given a $\psi$ as above satisfying
$\limsup|\gamma(s)|=0$, we can apply Theorem \ref{main_result_sec5}
and Remark \ref{IC_infl} to \rfb{SG_finalform} and conclude that for
every $d$ sufficiently small there exists a $T_d>0$ such that
\vspace{-2mm}
\BEQ{Jer_conf}
  |\psi(s_1)-\psi(s_2)| \m<\m \psi_1-\psi_2+{4d}/{\alpha^2}
   \FORALL s_1,s_2>T_d  \m. \vspace{-2mm}
\end{equation}
Here $\psi_1$ and $\psi_2$ are as in \rfb{t1-t2}. Since $d>0$
can be arbitrarily small, \rfb{Jer_conf} implies that $\lim_{s
\to\infty}\psi(s)=\psi_l$ for some finite $\psi_l$. It follows
from \rfb{SG_finalform} that, since $|\psi'(s)|\leq|\psi'(0)|+
(|\beta|+\Gamma+1)/\alpha+\|f\|_{L^\infty}/(i_v\alpha)$ for all
$s\geq 0$ (by the argument in \rfb{JF_election}), $\psi''$ is a
continuous bounded function of time. We can therefore apply
Barb\u alat's lemma to $\psi'$ to conclude that $\lim_{s\to\infty}
\psi'(s)=0$. Since $\eta(\rho s)=\psi(s)$ for all $s\geq0$ by
definition, it follows using \rfb{psi_defn} that $\lim_{t\to
\infty}\delta(t)=\delta_l=\psi_l-\phi-3\pi/2$ and $\lim_{t\to\infty}
\dot\delta(t)=0$. This, along with \rfb{smaller_K}, gives that
$\lim_{t\to\infty}\o(t)=\o_g$. Note that both $i_d$ and $i_q$ are
bounded functions on $[0,\infty)$. This follows from the discussion
at the beginning of Section \ref{sec3} (if either $|i_d(t)|$, $|i_q
(t)|$ or $|\o(t)|$ is sufficiently large, then $\dot W(t)<0$ which
ensures that $i_d$, $i_q$ and $\o$ are bounded). Hence
\rfb{eq:currents} can be rewritten as a second-order exponentially
stable linear system driven by an input which is a sum of a constant
vector and a vanishing vector as follows:
\begin{align*}
  \bbm{\dot{i_d}\\ \dot{i_q}} \m=\m& \bbm{-p & \o_g\\ -\o_g & -p}
  \bbm{i_d\\ i_q} + \frac{1}{L_s}\bbm{V\sin\delta_l\\ V\cos\delta_l
  -mi_f\o_g} \\[3pt] &+\frac{1}{L_s}\bbm{L_s(\o-\o_g)i_q+V(\sin\delta
  -\sin\delta_l) \\ -L_s(\o-\o_g)i_d+V(\cos\delta-\cos\delta_l)-mi_f
  (\o-\o_g)} \m.
\end{align*}
The second term on the right side of the above equation is a
constant, while the third term decays to zero asymptotically
(because $\lim_{t\to\infty}\o(t)=\o_g$ and $\lim_{t\to\infty}\delta(t)
=\delta_l$). This means that $i_d$ and $i_q$ converge to some constants
$i_{d,l}$ and $i_{q,l}$, i.e. \vspace{0mm}
$$ \lim_{t\to\infty} (i_d(t),i_q(t),\o(t),\delta(t)) \m=\m (i_{d,l},
   i_{q,l},\omega_g,\delta_l) \m.\vspace{-1mm}$$
It is now easy to verify using \rfb{eq:SG} that $(\ddot i_d,\ddot i_q,
\ddot\o,\ddot\delta)$ are bounded continuous functions. Therefore, by
applying Barb\u alat's lemma to $(\dot i_d,\dot i_q,\dot\o,\dot\delta)$,
we can conclude that $\lim_{t\to\infty}(\dot i_d(t),\dot i_q(t),\dot\o
(t),\dot\delta(t))=0$ and so $(i_{d,l},i_{q,l},\o_g,\delta_l)$ is an
equilibrium point for \rfb{eq:SG}. This completes the proof of our
claim.

Next we show that for each solution $(i_d,i_q,\omega,\delta)$ of
\rfb{eq:SG}, the corresponding solution $\psi$ of \rfb{SG_finalform}
is such that $\gamma$ in \rfb{gamma_defn} satisfies $\limsup|\gamma(s)
|=0$. This and the claim established above imply that every trajectory
of \rfb{eq:SG} converges to an equilibrium point which in turn implies,
using Lemma \ref{stable_manifold}, that \rfb{eq:SG} is almost globally
asymptotically stable whenever all its equilibrium points are
hyperbolic. Below, we will use the fact that the nonlinear function
$\Nscr$ is right-continuous if $\o_{\max}^d\leq2\o_{\min}^d$ for all
$d\in(0,\Gamma]$. This follows from two (easily verifiable) facts:
(1) $P_u^d$ and $P_l^d$, which are determined by $\o_{\max}^d$ and
$\o_{\min}^d$ using the functions $g$ and $h$, depend continuously on
$\o_{\max}^d$ and $\o_{\min}^d$ and (2) $\o_p$ and $\o_n$ (defined in
\rfb{velocity_bound}), and consequently $\o_{\max}^d$ and $\o_{\min}^d$,
are right-continuous functions of $d$.

Consider a solution $(i_d,i_q,\omega,\delta)$ of \rfb{eq:SG} and the
corresponding solution $\psi$ of \rfb{SG_finalform}. We will show
that $d_0=\limsup|\gamma(s)|=0$. Suppose that $d_0\neq0$. It follows
from \rfb{gamma_defn} and \rfb{Pd_defn} that $d_0\leq\Gamma$. In fact,
$d_0<\Gamma$ since $\limsup|P(s)|<1$. The latter inequality is a
consequence of two simple facts: (i) the integrand in \rfb{Pd_newdefn}
is 0 when $\tau=0$ and (ii) $\psi'$ is a bounded function on $[0,\infty)$
(as stated below \rfb{Jer_conf}) and therefore so is $\o_\rho=\psi'+\rho
\o_g$. Fix $\e>0$ such that $d_0+\e<\Gamma$. Let $d=d_0+\e$ and define
$\o_{\max}=\o_{\max}^d$ and $\o_{\min}=\o_{\min}^d$. Lemma
\ref{velocity_trap} gives that there exists $T_1>0$ such that for all
$s\geq T_1$, $\o_{\min}<\o_\rho(s)<\o_{\max}$. From \rfb{Pd_newdefn}
we get that for all $s\geq T_1$, \vspace{-1mm}
\begin{align}
  P(s) &\m=\m p\rho \int_0^{\tilde s} e^{-p\rho\tau} \sin\left(
  \int_0^\tau\tilde\o_\rho(\tilde s-\sigma) \dd\sigma\right) \dd\tau
  \nonumber \\ &\qquad+ p\rho \int_{\tilde s}^s e^{-p\rho\tau} \sin\left(
  \int_0^\tau\o_\rho(s-\sigma) \dd\sigma\right) \dd\tau \m,
  \label{new_not}
\end{align}
where $\tilde s=s-T_1$ and $\tilde\o_\rho(\tilde s-\sigma)=\o_\rho(s-
\sigma)$ for all $0\leq\sigma\leq\tilde s$. The second term on the right
side of \rfb{new_not} decays exponentially to zero as $s\to\infty$.
Denote the first term on the right side of \rfb{new_not} by $\tilde
P(\tilde s)$. It is easy to see that $\o_{\min}<\tilde\o_\rho(\tau)<
\o_{\max}$ for all $0\leq\tau\leq\tilde s$. Using this and the assumption
$\o_{\max}\leq2\o_{\min}$, we can apply the bounds \rfb{UB_eqn} and
\rfb{LB_eqn} derived in Lemma \ref{bound} for $P(s)$ to $\tilde P
(\tilde s)$ to conclude that \vspace{-1mm}
$$ P_l^d-e^{-p\rho(\tilde s-T_{\min})} \m\leq\m \tilde P(\tilde s)
   \m\leq\m P_u^d + e^{-p\rho(\tilde s-T_{\min})} \m, \vspace{-1mm} $$
where $T_{\min}=2\pi/\o_{\min}$. This, together with \rfb{new_not},
implies that $P_l^d\leq\liminf P(s)\leq\limsup P(s)\leq P_u^d$. It now
follows from \rfb{gamma_defn} that $d_0\leq V_r\max\{P_u^d-P_\infty,
P_\infty-P_l^d\}=\Nscr(d_0+\e)$. Thus we have shown that $d_0\leq
\Nscr(d_0+\e)$ for all $\e>0$ satisfying $d_0+\e<\Gamma$ which, due to
the right-continuity of $\Nscr$, implies that $d_0\leq\Nscr(d_0)$. If
$d_0\neq0$, this contradicts our assumption that $\Nscr(d)<d$ for all
$d\in(0,\Gamma]$. Hence $d_0=0$.
\end{proof}

%When $p\rho$ is large, we can often obtain an upper bound $\Gamma_0<
%\Gamma$ for $\limsup|\gamma(s)|$ by bounding $|P(s)|$ by a constant
%smaller than 1. In such a case $\Gamma$ in the above theorem can be
%replaced with $\Gamma_0$.

In general, the conditions in the above theorem are hard to verify
analytically, but it is straightforward to verify them numerically.
This will be demonstrated using an example in the next section.

\begin{remark} \label{compare_stab_cond}
In Section \ref{sec3} we showed that \rfb{eq:SG} has two sequences of
equilibrium points if and only if the right side of \rfb{fsquared2},
denoted as $\Lambda$, satisfies $|\Lambda|<1$. It is easy to check
that $\beta=-\Lambda$ ($\beta$ is defined in \rfb{alpha_beta}). The
condition $\Nscr(d)<d$ for all $d\in(0,\Gamma]$ in Theorem
\ref{stab_cond2} implies that $|\beta|<1$. Indeed, if $|\beta|\geq1$,
then irrespective of $d$, $\o_p-\o_n>2/\alpha$ and so $\o_{\max}^d-
\o_{\min}^d>2/\alpha$. This means that even when we let $d\to0$, $P_u^d-
P_l^d$ will remain bounded away from 0 which, along with \rfb{Defn_Phi},
implies that $\Nscr(d)>d$ for small $d$.
\end{remark}

\begin{remark} \label{CDC_thm}
In \cite[Theorem 5.1]{NaWe:14} we presented a simple set of
conditions, which can be easily verified analytically, under which
\rfb{eq:SG} is aGAS. These conditions were derived using the standard
form \rfb{SG_finalform}--\rfb{Pd_defn} of the ESE. They were stated in
\cite{NaWe:14} under the assumptions that $0<\beta<1$,
$\|\gamma\|_{L^\infty}<d<\beta$ for some $d>0$ and $\beta+d<1$,
because in that work the asymptotic bounds for the forced
pendulum equation were derived under these assumptions. In Section
\ref{sec5}, we have derived the same asymptotic bounds under the less
restrictive assumptions $|\beta|<1$, $\limsup|\gamma(s)|<d$ and
$|\beta|+d<1$ for some $d>0$. Hence the conclusions of \cite[Theorem
5.1]{NaWe:14} continue to hold under these less restrictive
assumptions as well. For the simple conditions of that theorem to hold
$V_r$ must be small (much less than 1), but for nominal SG parameters
typically $V_r>1$. Nevertheless, that theorem enabled us to identify a
large range of (not necessarily practical) SG parameters for which
\rfb{eq:SG} is aGAS. For instance, given a set of SG parameters, if we
increase $V$ by a factor of $n$ and decrease $J$ by the same factor,
then for all $n$ sufficiently large the simple stability conditions
will hold. % Hence \cite[Theorem 5.1]{NaWe:14} showed that the aGAS
% property, which we conjectured purely based on numerical studies, is
% in fact exhibited by \rfb{eq:SG} for some parameter values.
% This inspired us to develop the more sophisticated conditions in
% Theorem \ref{stab_cond2}, that are satisfied by several practically
% relevant set of SG parameters.
\end{remark}

For the sufficient stability conditions in Theorem \ref{stab_cond2}
to hold, it is necessary that $|\beta|<1$ (see Remark
\ref{compare_stab_cond}) and it is desirable that the damping
coefficient $\alpha$ be large. Indeed, for any given $d>0$ it follows
from \rfb{velocity_bound} that $|\o_p|$ and $|\o_n|$ are inversely
proportional to $\alpha$ and it follows from the definitions of $g$,
$h$, $\Nscr$ in \rfb{g_defn}, \rfb{h_defn} and \rfb{Defn_Phi},
respectively, that $\Nscr(d)$ is proportional to $\max\{|\o_p|,|\o_n|
\}$. Hence for any given $\beta$ and $\Gamma$ with $|\beta|<1$, if
$\alpha$ is sufficiently large, then $\Nscr(d)<d$ and $\o_{\max}^d\leq2
\o_{\min}^d$ for all $d\in(0,\Gamma]$, i.e. the conditions of Theorem
\ref{stab_cond2} will hold. On an intuitive level, the need for large
$\alpha$ and $|\beta|<1$ for global asymptotic stability can be
anticipated from the ESE \rfb{SG_finalform}. It is easy to see from
\rfb{alpha_beta} that
$$ \alpha \m=\m \frac{D_p\sqrt{L_s^\m}\sqrt[4]{p^2\sbluff + \o_g^2}}
   {\sqrt{mi_f VJ}} \m,\qquad \beta \m=\m \frac{T_a L_s
   \sqrt{p^2\sbluff+\o_g^2}}{mi_f V} - \frac{mi_f p\m\o_g}{V\sqrt
   {p^2\sbluff+\o_g^2}} \m,\vspace{-2mm}$$
where, as usual, $p=R_s/L_s$ and $T_a=T_m-D_p\o_g$ is the actual
mechanical torque. The next two remarks contain suggestions for
choosing the parameters of the SG model to increase $\alpha$, so
that the sufficient conditions of Theorem \ref{stab_cond2} are
satisfied. These suggestions may be useful for designing a
synchronverter.

\begin{remark} \label{Aug12}
Let $P_n$ denote the nominal power of the SG. Then $T_a=P_n/\o_g$
and so it is independent of $D_p$. Clearly, by increasing $D_p$ or
decreasing $J$, $\alpha$ can be increased without changing $\beta$.
We can also increase $\alpha$ by increasing $L_s$, but this will
result in an increase in $\beta$ as well. The constraint $|\beta|<1$
gives an upper limit for $L_s$. Similarly, increasing $p$ also
increases $\alpha$ (to a smaller extent since typically $p<\o_g$).
Again the restriction $|\beta|<1$ imposes an upper bound on the
possible values for $p$.

Our numerical experiments with the stability conditions of Theorem
\ref{stab_cond2} suggest that a smaller value for $|\beta|$ is
preferable. Let $\beta_1$ and $\beta_2$ be the first and
second terms, respectively, in the above expression for $\beta$. For
typical SG parameters $p<<\o_g$. Hence increasing $p$ (in a certain
range) will scale up $\beta_2$ more than it does $\beta_1$. On the
other hand, when we increase $L_s$, $\beta_1$ increases while $\beta_2$
remains constant. Since $\beta=\beta_1-\beta_2$, it is possible to
increase $p$ and $L_s$ simultaneously such that $\beta$ remains small.
Increasing $p$ and $L_s$ increases $\alpha$, which is desirable.
\end{remark}

\begin{remark} \label{other_factors}
Modifying any SG parameter other than $D_p$ to increase $\alpha$,
while keeping $\beta$ small, will change at least one of $V_r$, $p$
and $\rho$. Also, when $L_s$ is increased we must increase $mi_f$,
for instance according to \rfb{mif_inc}. Thus when we increase
$\alpha$, we cause an unintentional change in the value of the
function $\Nscr$ corresponding to the variations that we induce in
the values of $V_r$, $p$, $\rho$ and $mi_f$. Our numerical studies
indicate that the desired changes in $\Nscr$ caused by increasing
$\alpha$ are often far more significant than these unintentional
changes. Thus increasing $\alpha$ according to Remark \ref{Aug12}
typically helps to satisfy the stability conditions of Theorem
\ref{stab_cond2}. This is demonstrated using an example in the next
section.
\end{remark}

%%%%%%%%%%**********%%%%%%%%%%**********%%%%%%%%%%**********%%%%%%%%%%
\section{\secp Application and examples} \label{sec7} % Section 7

\ \ \ In this section, on the basis of the results of Section
\ref{sec6}, we propose a modification to the design of synchronverters
to enhance their global stability properties (by artificially
enlarging the filter inductors). Using this modification, we choose
the main parameters of a 500kW synchronverter so that they satisfy the
stability conditions of Theorem \ref{stab_cond2}, for a suitably
chosen constant field current. Thus for this set of parameters
\rfb{eq:SG} is almost globally asymptotically stable.

To select a set of nominal parameters for a synchronverter (without
using our modification), we follow the empirical guidelines used in
the design of SGs and commercial inverters. As usual, let $\o_g$ be
the grid frequency and let $V$ be the line voltage so that the rms
voltage on each phase is $V_{\rm rms}=V/\sqrt{3}$. Let $P_n$ denote
the nominal active power supplied by a SG. Then the nominal active
mechanical torque generated by its prime mover is $T_a=P_n/\o_g$.
Following empirical guidelines, the moment of inertia $J$ of the SG
rotor is chosen so that $(J\o_g^2/2)/P_n$ lies between 2 and 12
seconds. The frequency droop constant $D_p$ is selected such
that if the SG rotor frequency drops below the nominal grid frequency
by $d_p\%$ of $\o_g$, then the active power should increase by the
amount $P_n$. Thus, $D_p=100P_n/(d_p\o_g^2)$ (typically $d_p\approx
3$). By definition $T_m=T_a+D_p\o_g$. To compute $L_s$ and $R_s$ we
assume that in steady state the stator current $i$ and the grid
voltage $v$ are in phase. Then $P_n=3V_{\rm rms}I_{\rm rms}$, where
$I_{\rm rms}$ is the nominal rms value of the current on each
phase. In commercial inverters the inductance $L_s$ of the filter
inductor is chosen so that the voltage drop across $L_s$, given
by $L_s\o_g I_{\rm rms}$, is $3-5\%$ of $V_{\rm rms}$. The resistance
$R_s$ of the filter inductor is normally such that the rms voltage
drop across $R_s$ is below $0.5\%$ of $V_{\rm rms}$. The expression
$mi_f=\sqrt{\frac{3}{2}} M_fi_f$ is determined by \rfb{sync_burned}
(with $\o=\o_g$) and the condition \vspace{-2mm}
\BEQ{mif_inc}
   e_{\rm rms} \m\approx\m \sqrt{V_{\rm rms}^2+\o_g^2 L_s^2 I_{\rm
   rms}^2} \ , \vspace{-1mm}
\end{equation}
where $e_{\rm rms}$ is the rms of the electromotive force $e$
in each phase at steady state.

We have briefly introduced synchronverters in Section \ref{sec1},
but so far we have not described their structure. Without going
into too much detail, the synchronverter is based on an inverter
having three legs built from electronic switches which operate at a
high switching frequency, see \cite{Zh_etal:14,ZhWe:11} for details.
It has a DC side which is normally connected to a DC energy source (or
a storage device), three AC output terminals corresponding to the
three phases of the power grid and a neutral line (which serves as
reference for all the voltages).  We denote the vector of voltages on
the AC terminals, averaged over one switching period, by $g=[\m g_a\ \
g_b\ \ g_c]^\top$. These AC terminals are connected to passive
low-pass filters, each of which may be an inductor, or two inductors
and a capacitor (the so-called LCL filter) or they may have a more
complicated structure. The purpose of these filters is to transfer the
power from the inverter to the grid while eliminating the voltage and
current ripples at the switching frequency and its higher harmonics.
If there is an LCL filter, then for the purpose of modeling, we
neglect the capacitor and approximate the filter with a single
inductor whose inductance $L_s$ is the sum of the two inductances in
the circuit. (The same goes for the series resistances of these
inductors.) This is justified because, up to the grid frequency, the
impedance of the capacitor is much larger (in absolute value) than the
impedances of the inductors.

For a synchronverter designed as in \cite{ZhWe:11}, the voltages
$g_a$, $g_b$ and $g_c$ represent the synchronous internal voltages in
the stator windings of the virtual synchronous generator, while an LCL
filter represents the inductance $L_s$ and the series resistance $R_s$
of the stator windings (by ignoring the capacitor). As discussed in
Section \ref{sec1}, the rotor dynamics is implemented in software.
According to the design in \cite{Zh_etal:14,ZhWe:11}, $g=e$, where $e$
is computed by the synchronverter algorithm using the measured stator
currents and the equations \rfb{sync_burned} and
\rfb{Arye_Levinson_Karat}. This $e$ is then provided to the stator
coil, as depicted in Figure 6 with $n=1$. We now think that choosing
$g=e$ is not the best approach, because the inductance $L_s$ is far
too small.  Indeed, for reasons of size and cost, the inductance of
the filter inductor of a typical commercial inverter is usually much
smaller than the stator inductance of a SG of the same power rating
(about 50 times smaller), see for instance \cite[Example 3.1]{Kun:94}.
This fact alone justifies increasing $L_s$ artificially, in order to
make the synchronverter more similar to a SG. There is an additional
reason for increasing $L_s$ artificially, and this is to improve the
stability of the system, as explained below.

%%%%%%%%%%**********%%%%%%%%%%**********%%%%%%%%%%**********%%%%%%%%%%
\begin{center} \vspace{-5mm}
\includegraphics[scale=0.12]{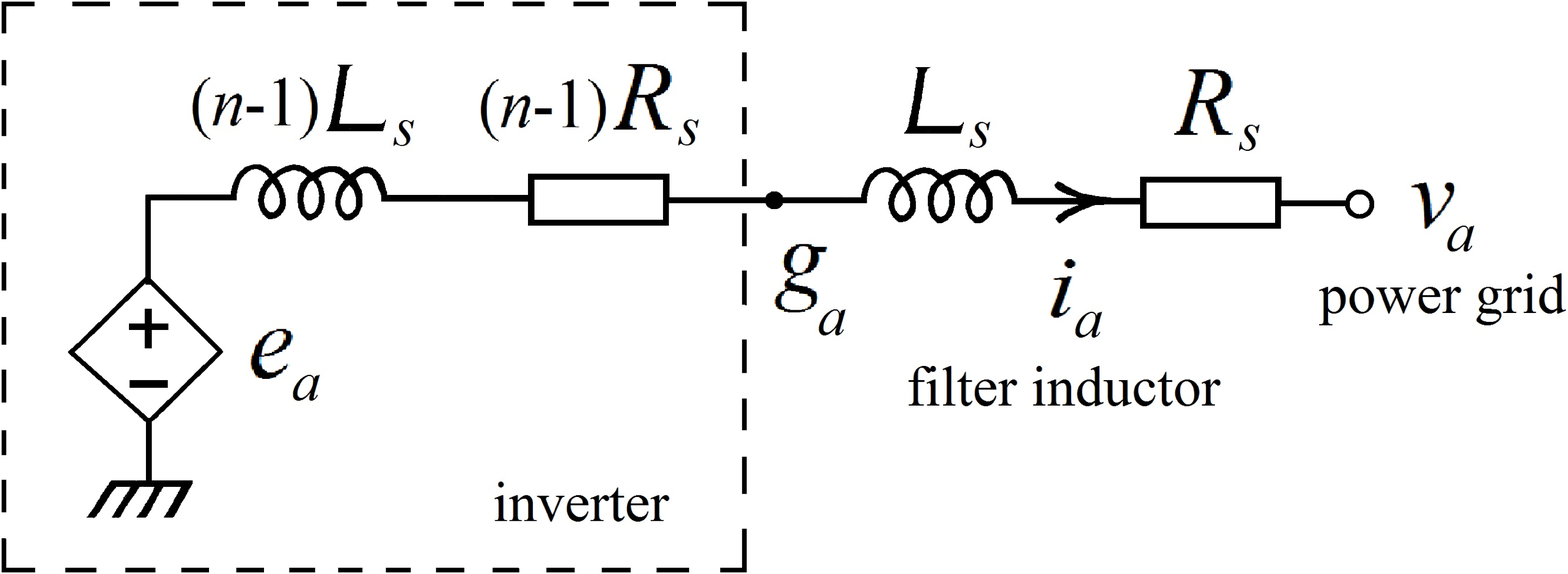} \vspace{-5mm}
\end{center}
\centerline{ \parbox{5.3in}{\vspace{3mm}
Figure 6. An inverter operated as a synchronverter, with filter
inductor $L_s$ and its series resistance $R_s$, connected to the
utility grid. Only phase $a$ is shown. The synchronverter algorithm
provides the synchronous internal voltage $e_a$ according to
\rfb{sync_burned}. The inductor and resistor multiplied with $(n-1)$
are virtual. For $n=1$ we get a usual synchronverter as in
\cite{ZhWe:11}. The actual (short time average) voltage generated by
the inverter is $g_a$.\vspace{3mm}}}
%%%%%%%%%%**********%%%%%%%%%%**********%%%%%%%%%%**********%%%%%%%%%%

The parameters of synchronverters selected according to the empirical
guidelines described earlier typically do not satisfy the stability
conditions of Theorem \ref{stab_cond2}. But as discussed in Section
\ref{sec6} (above Remark \ref{Aug12}), if we increase the damping
factor $\alpha$ to a sufficiently large value, then the stability
conditions will hold. We see from Remark \ref{Aug12} that $\alpha$ can
be increased primarily by increasing $D_p$, decreasing $J$ or
increasing $L_s$. In a synchronverter (in normal operation), the
parameters $D_p$ and $J$ are chosen based on grid requirements
(standard droop behavior and inertia) as discussed earlier and we
cannot change them to increase $\alpha$. Thus, the only way to
increase $\alpha$ is via increasing $L_s$. Replacing the existing
inductor with a much larger one (designed for the same nominal current,
of course) would be very expensive and the larger inductor would be very
bulky. We propose a method to virtually increase the inductance $L_s$
of the inductor (and also its series resistance $R_s$) by a factor
$n$ (for instance, $n=30$), by only changing the synchronverter
control algorithm. The idea is to create a virtual inductor of value
$(n-1)L_s$ and with series resistance $(n-1)R_s$ in series with the
real inductor, as shown in Figure 6 (which shows only one phase out
of three, phase $a$). We see from the figure (and a trivial
computation) that
\BEQ{gagbgc}
   g_a \m=\m \frac{(n-1) v_a + e_a}{n} \m.
\end{equation}
Here $e_a$ is the synchronous internal voltage given by
\rfb{sync_burned} while $g_a$ is the average voltage (over one
switching period) at the output of the switches in the inverter.
Thus, by enforcing $g$ computed as in \rfb{gagbgc} we create the
effect of providing the synchronous internal voltage $e=[\m e_a\ \
e_b\ \ e_c]^\top$ to stator coils with inductance $nL_s$ and
resistance $nR_s$. By this method we increase the effective
inductance and resistance of the stator coils by a factor of $n$
and $\alpha$ by a factor of $\sqrt{n}$. For $n=1$ we recover the
structure of the synchronverter in \cite{ZhWe:11}.

In the sequel, we present a choice for the main parameters of a 500kW
synchronverter which (after the modification in Figure 6) satisfy the
conditions of Theorem \ref{stab_cond2}. Thus for this set of
parameters the grid-connected SG model \rfb{eq:SG} is almost globally
asymptotically stable (aGAS). We also consider other values for the SG
parameters to illustrate the different types of global dynamic behavior
that the system \rfb{eq:SG} can exhibit.

\begin{example} \label{500kW}
Consider a synchronverter designed for the grid frequency
$\o_g=100\pi\,$rad/sec and line voltage $V=6000\sqrt{3}$ Volts. The
synchronverter supplies a nominal active power $P_n=500\,$kW and
operates with a $3\%$ frequency droop coefficient, i.e. $d_p=3$.
Following the empirical guidelines discussed earlier, we choose
$D_p=168.87\,$N$\cdot$m/(rad/sec), $T_m=54.64\,$kN$\cdot$m,
$J=20.26\,$Kg$\cdot$m$^2$/rad, $L_s=27.5\,$mH, $R_s=1.08\,\Om$ and
$mi_f=33.11\,$Volt$\cdot$sec. For this set of parameters the sufficient
stability conditions in Theorem \ref{stab_cond2} do not hold. Although
the stability conditions can be satisfied by increasing $D_p$ or
decreasing $J$, this is not desirable from an operational
standpoint. So following the discussion earlier in this section we
increase the effective inductance and resistance by a factor of $30$,
i.e. $n=30$ in Figure 6. Then (the effective) $L_s=825.06\,$mH,
$R_s=32.4\,\Om$ and $mi_f=51.67\,$Volt$\cdot$sec.

%%%%%%%%%%%%%% *** *** Figures 7, 8 *** *** %%%%%%%%%%%%% *** %%%%%%%%
\vspace{-3mm} \hspace{-10mm}
\centerline{
 \parbox{73mm}{$$\includegraphics[scale=.58]{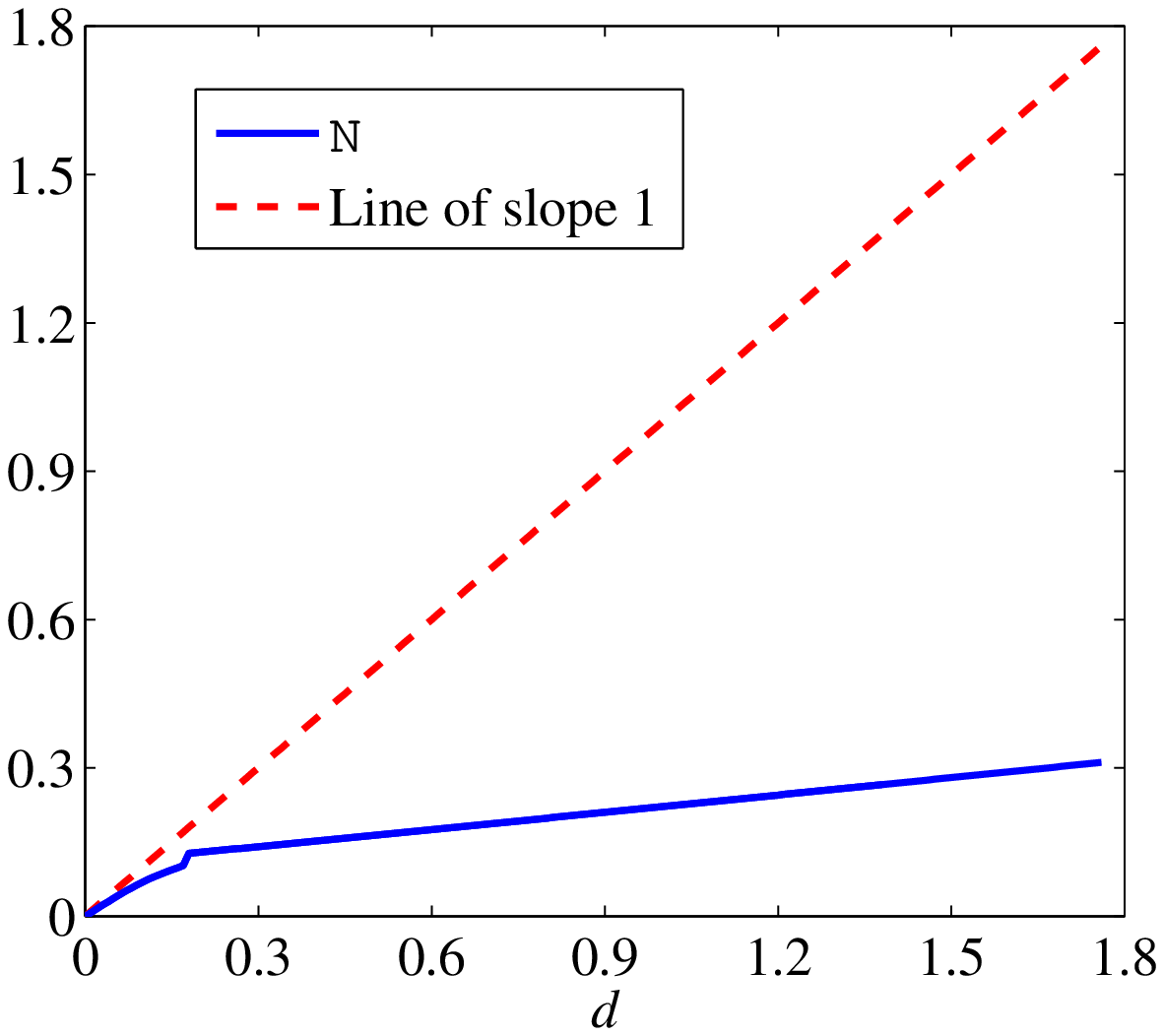}$$
 \hspace{3mm}
 \centerline{\parbox{70mm}{Figure 7. Plot of $\Nscr$ for the SG
 parameters in Example \ref{500kW}.}}}
 \hspace{2mm}
 \parbox{73mm}{$$\includegraphics[scale=.58]{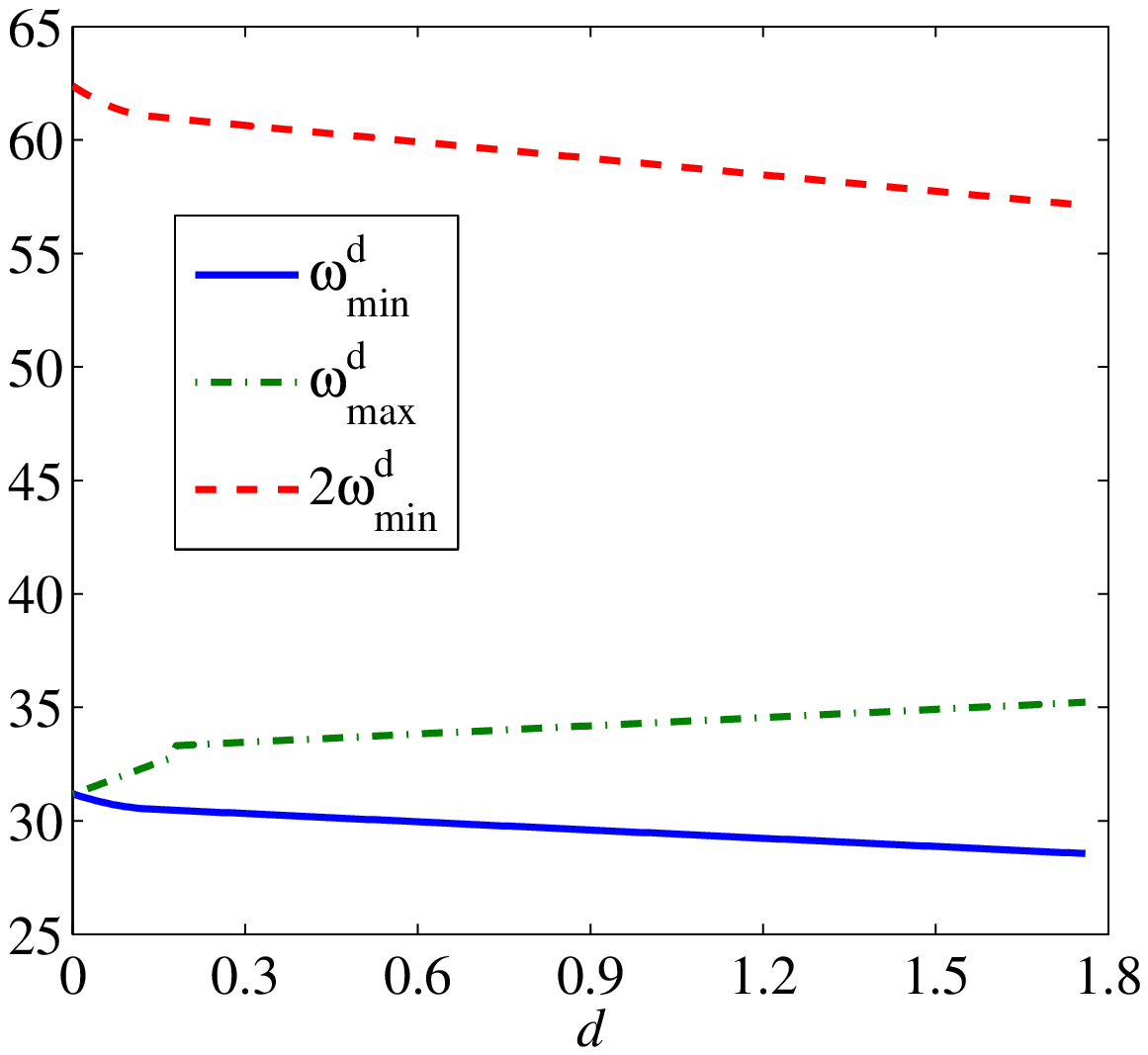}$$
 \hspace{2mm}
 \centerline{\parbox{70mm}{Figure 8. Plots of $\o_{\max}^d$, $\o_{\min}^d$
 and $2\o_{\min}^d$ for the SG parameters in Example \ref{500kW}.}}}}
%%%%%%%%%%%%%%%%%% *** End of Figure 7, 8 *** %%%%%%%%%%%%%%%%%%%%%%%%%

%%%%%%%%%%%**********%%%%%%%%%%**********%%%%%%%%%%**********%%%%%%%%%%
%\begin{center}
%\includegraphics[scale=0.7]{contraction}\\
%{Figure 44. Plot of $\Nscr$ on the interval $[0,(1-P_\infty)V_r]$.
%\vspace{-4mm}}
%\end{center}
%%%%%%%%%%%**********%%%%%%%%%%**********%%%%%%%%%%**********%%%%%%%%%%
%\begin{center}
%\includegraphics[scale=0.7]{velocity}\\
%{Figure 55. Plot of $\o_{\max}$, $\o_{\min}$ and $2\o_{\min}$ on
%the interval $[0,(1-P_\infty)V_r]$.}
%\end{center}

\medskip
Consider the SG model \rfb{eq:SG} and the corresponding pendulum
system \rfb{SG_finalform} with the above parameter values and
$n=30$. This yields $p=39.27\,$rad/sec, $i_v=39.78\,$A, $\alpha=
0.83\,$sec/rad, $\beta=0.58$, $\rho=0.1\,$s/$\sqrt{\text{rad}}$,
$V_r=1.57$ and $P_\infty=0.12$. For the chosen parameter values,
all the equilibrium points of \rfb{eq:SG}
are hyperbolic. Figure 7 is the plot of the function
$\Nscr$ defined in \rfb{Defn_Phi} on the interval $(0,(1+P_\infty)
V_r]$. Figure 8 is the plot of $\o_{\max}^d$, $\o_{\min}^d$ and
$2\o_{\min}^d$ (as defined in Theorem \ref{stab_cond2}) on the same
interval. It is clear from these figures that the conditions in
Theorem \ref{stab_cond2} are satisfied. Hence the SG model \rfb{eq:SG}
with the virtual inductor is aGAS. \hfill$\square$
\end{example}

Numerical simulations suggest that the SG model \rfb{eq:SG} is aGAS
for the parameter values in the above example even when we take $n=1$
(no virtual inductance). But we cannot prove this since the conditions
of Theorem \ref{stab_cond2} do not hold. In fact it may be hard to
prove this analytically because if we make small changes in the value
of $R_s$ (while keeping all other parameter values same), then the SG
model loses the aGAS property. Indeed, if we increase $R_s$ so that
the voltage drop across the resistor is $1\%$ (instead of $0.5\%$) of
$V_{\rm rms}$, then the SG model is not aGAS. In this case, the SG
model has a sequence of stable and unstable equilibrium points.
It also has a sequence of periodic solutions (two periodic solutions
in this sequence differ only in their value of $\delta$ and the
difference is a multiple of $2\pi$). Along each periodic solution,
$\o<\o_g$ and $\o$, $i_d$ and $i_q$ oscillate with a time period of
about 0.16 seconds while $\delta$ decreases monotonically ($\delta$
is periodic modulo $2\pi$).

Suppose that we choose $D_p=15\,$N\m m/(rad/sec) and let all the other
SG parameters be as in Example \ref{500kW} with $n=1$. Then the SG
model \rfb{eq:SG} has two sequences of equilibrium points, both of
them unstable (this cannot happen in the case of a pendulum
equation with constant forcing). The SG model also has a sequence of
periodic solutions.

%%%%%%%%%%**********%%%%%%%%%%**********%%%%%%%%%%**********%%%%%%%%%%

\end{document}